\numberwithin{equation}{section}
\theoremstyle{plain}
\newtheorem{thm}{Theorem} 
\newtheorem{theorem}[equation]{Theorem} 
\newtheorem{corollary}[equation]{Corollary} 
\newtheorem{lemma}[equation]{Lemma}
\newtheorem{proposition}[equation]{Proposition}
\theoremstyle{definition}
\newtheorem{definition}[equation]{Definition} 
\newtheorem{notation}[equation]{Notation} 
\newtheorem{example}[equation]{Example}
\newtheorem{question}[equation]{Question}
\newtheorem{remark}[equation]{Remark}
\DeclareMathOperator\Aut{Aut}
\DeclareMathOperator\depth{depth}
\DeclareMathOperator\End{End}
\DeclareMathOperator\gr{gr}
\DeclareMathOperator\id{id}
\DeclareMathOperator\Inn{Inn}
\DeclareMathOperator\lm{lm}
\DeclareMathOperator\pd{pd}
\DeclareMathOperator\rank{rank}
\DeclareMathOperator\reg{reg}
\DeclareMathOperator\tr{tr}
\newcommand\CC{\mathbb C}
\newcommand\NN{\mathbb N}
\newcommand\ZZ{\mathbb Z}
\newcommand\cB{\mathcal B}
\newcommand\cS{\mathcal S}
\renewcommand{\int}{\mathrm{int}}
\newcommand\inv{^{-1}}
\newcommand{\im}{\ensuremath{\operatorname{im}}}
\newcommand\iso{\cong}
\newcommand\kk{\Bbbk}
\newcommand\tensor{\otimes}
\newcommand{\rep}{\ensuremath{\operatorname{rep}}}
\newcommand{\xra}{\xrightarrow}
\newcommand\restrict[1]{\raisebox{-.3ex}{$|$}_{#1}}
\newcommand{\GL}{\ensuremath{\operatorname{GL}}}
\renewcommand{\to}{\ensuremath{\longrightarrow}}
\newcommand{\from}{\ensuremath{\longleftarrow}}
\title{On the discriminant of twisted tensor products}
\author{Jason Gaddis, Ellen Kirkman, W. Frank Moore}
\address{Wake Forest University, Department of Mathematics and Statistics, P. O. Box 7388, Winston-Salem, NC 27109} 
\email{gaddisjd@wfu.edu, kirkman@wfu.edu, moorewf@wfu.edu}
\subjclass[2010]{16W20,11R29,16S36,16S35}
\keywords{Discriminant, Twisted tensor product, Ore extension, Skew group ring, Automorphism group}
\begin{document}

\begin{abstract}
We provide formulas for computing the discriminant of noncommutative algebras 
over central subalgebras in the case of Ore extensions and skew group extensions.
The formulas  follow from a more general result regarding 
the discriminants of certain twisted tensor products.
We employ our formulas to compute automorphism groups
for examples in each case.
\end{abstract}

\maketitle

\section{Introduction}

Throughout $\kk$ is 
an algebraically closed, characteristic zero field
and all algebras are $\kk$-algebras.
All unadorned tensor products should be regarded as over $\kk$.
Given an algebra $R$, we denote by $R^\times$ the set 
of units in $R$.
If $\sigma \in \Aut(R)$, then $R^\sigma$ denotes the
subalgebra of elements of $R$ that are fixed under $\sigma$.
We denote the center of $R$ by $C(R)$.

Automorphism groups of commutative and noncommutative algebras 
can be notoriously difficult to compute.
For example, $\Aut(\kk[x,y,z])$ is not yet fully understood.
In \cite{CPWZ1}, the authors give a method for determining the 
automorphism groups of noncommutative algebras using the discriminant.
This was studied further in \cite{CPWZ2,CPWZ3,CYZ2,CYZ1}.
Discriminants of deformations of polynomial rings
were computed using Poisson geometry in \cite{LY,NTY}.

We refer the reader to \cite{CPWZ1} for the
general definitions of {\sf trace} and {\sf discriminant}
in the context of noncommutative algebras.
We review the definitions only in the case that
$B$ is an algebra finitely generated free over 
a central subalgebra $R \subseteq C(B)$ of rank $n$.

Left multiplication defines a natural embedding
$\lm:B \rightarrow \End_C(B) \iso M_n(R)$.
The usual matrix trace defines a map 
$\tr_\int:M_n(R) \rightarrow R$
called the {\sf internal trace}.
The {\sf regular trace} is defined as the composition
$\tr_{\reg}:B \xrightarrow{\lm} M_n(R) \xrightarrow{\tr_\int} R$.
For our purposes, $\tr$ will be $\tr_{\reg}$.

Let $\omega$ be a fixed integer and 
$Z:=\{z_i\}_{i=1}^\omega$ a subset of $B$.
The {\sf discriminant of $Z$} is defined to be
\[ d_\omega(Z) = \det(\tr(z_iz_j))_{\omega \times \omega} \in R.\]
If $Z$ is an $R$-basis of $B$, then the
{\sf discriminant of $B$ over $R$} is defined to be
\[ d(B/R) =_{R^\times} d_\omega(Z),\]
where $x =_{R^\times} y$ means $x = cy$ for some $c \in R^\times$.

The discriminant is independent of $R$-linear bases of $B$ \cite[Proposition 1.4]{CPWZ1}.
Moreover, if $\phi \in \Aut(B)$ and $\phi$ preserves $R$,
then $\phi$ preserves the ideal 
generated by $d(B/R)$ \cite[Lemma 1.8]{CPWZ1}.

Computing the discriminant is a computationally difficult task,
even for algebras with few generators.
For example, the matrix obtained from $\tr(z_iz_j)$ for the 
skew group algebra $\kk_{-1}[x_1,x_2,x_3] \# \cS_3$
has size $288 \times 288$.
Our first goal is to provide methods for obtaining the discriminant 
in cases where the algebra may be realized as an extension of
a smaller algebra where computations may be easier.

If $A$ is an algebra and $\sigma \in \Aut(A)$,
then the {\sf Ore extension} $A[t;\sigma]$ is generated by $A$
and $t$ with the rule $ta=\sigma(a)t$ for all $a \in A$.

\begin{thm}[Theorem \ref{thm.ore}]
Let $A$ be an algebra and set $S=A[t;\sigma]$,
where $\sigma \in \Aut(A)$ has order $m<\infty$ and no
$\sigma^i$, $1 \leq i < m$, is inner.
Suppose $R$ is a central subalgebra of $S$ and set $B = R \cap A^\sigma$.
If $A$ is finitely generated free over $B$ of rank $n$ and $R=B[t^m]$, then
$S$ is finitely generated free over $R$ and
\[ d(S/R) =_{R^\times} \left(d(A/B)\right)^m \left(t^{m-1}\right)^{mn}.\]
\end{thm}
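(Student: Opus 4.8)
The plan is to find an explicit $R$-basis of $S = A[t;\sigma]$ adapted to the given data and then compute the trace matrix in block form.

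The plan is to produce an explicit $R$-basis of $S$ adapted to the natural $\ZZ/m$-grading by $t$-degree and then read the discriminant off the block form of the trace matrix. First I would record the structural facts that make the setup coherent: since $\sigma^m=\id$ the element $t^m$ is central, and since $B=R\cap A^\sigma\subseteq A^\sigma$ we have $\sigma|_B=\id$ and $B\subseteq C(S)$, so $R=B[t^m]$ is a genuine central subalgebra. Fixing a $B$-basis $\{a_1,\dots,a_n\}$ of $A$, I would show that $\{a_kt^j : 1\le k\le n,\ 0\le j\le m-1\}$ is an $R$-basis of $S$. Spanning follows by writing $t^i=(t^m)^q t^{j}$ with $0\le j<m$ and expanding the $A$-coefficient in the $a_k$; $R$-linear independence follows because $S=\bigoplus_{i\ge 0}At^i$ is free as a left $A$-module, so terms attached to distinct powers $t^i$ cannot cancel and then $B$-independence of the $a_k$ finishes the job. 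This exhibits $S$ as free of rank $mn$ over $R$.

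Next I would carry out the trace computations using the grading. The ring $S$ is $\NN$-graded by $t$-degree, $R$ sits in the degrees divisible by $m$, and $S=\bigoplus_{j=0}^{m-1}S_j$ where $S_j$ is the span of the degrees $\equiv j\pmod m$; each $S_j$ is free over $R$ of rank $n$ with basis $\{a_kt^j\}_k$. Left multiplication by a $t$-homogeneous element of degree $e$ sends $S_j$ into $S_{j+e\bmod m}$, so its contribution to $\tr_{S/R}$ vanishes unless $e\equiv 0\pmod m$. For $c\in A$ I would compute the two surviving cases directly: multiplication by $c$ on each $S_j$ has, in the basis $\{a_kt^j\}_k$, exactly the matrix of multiplication by $c$ on $A$ over $B$, whence $\tr_{S/R}(c)=m\,\tr_{A/B}(c)$; and, using $\sigma^m=\id$ together with the centrality of $t^m$, multiplication by $ct^m$ on $S_j$ has matrix $t^m$ times that of $c$ on $A/B$, whence $\tr_{S/R}(ct^m)=m\,t^m\,\tr_{A/B}(c)$.

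Then I would assemble the determinant. Writing the basis products as $(a_kt^j)(a_lt^i)=a_k\sigma^j(a_l)\,t^{i+j}$ and grouping the trace matrix into $n\times n$ blocks indexed by $(j,i)$, the vanishing above kills every block except the corner block at $(0,0)$, with entries $m\,\tr_{A/B}(a_ka_l)$, and the antidiagonal blocks at $(j,m-j)$ for $1\le j\le m-1$, with entries $m\,t^m\,\tr_{A/B}(a_k\sigma^j(a_l))$. Exactly one nonzero block sits in each block-row and block-column, so the determinant is, up to sign, the product of the block determinants. Letting $T=(\tr_{A/B}(a_ka_l))$ and $P^{(j)}$ the matrix of the $B$-linear automorphism $\sigma^j$ of $A$, one has $(\tr_{A/B}(a_k\sigma^j(a_l)))=TP^{(j)}$, and $\det P^{(j)}\in B^\times\subseteq R^\times$ since $P^{(j)}$ is invertible over $B$ (its inverse is $P^{(m-j)}$). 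Because $m^n\in\kk^\times\subseteq R^\times$, the corner block contributes $d(A/B)$ up to a unit, each of the $m-1$ antidiagonal blocks contributes $(t^m)^n d(A/B)$ up to a unit, and multiplying gives $d(S/R)=_{R^\times}(d(A/B))^m (t^m)^{n(m-1)}=(d(A/B))^m (t^{m-1})^{mn}$.

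I expect the main obstacle to be bookkeeping rather than conceptual: one must verify that the trace matrix genuinely has the block-antidiagonal pattern claimed and that every discarded scalar—$m^n$, each $\det P^{(j)}$, and the block-permutation sign—is a unit in $R$, so that it is legitimate to drop them in the relation $=_{R^\times}$. The cleanest writeup should treat the antidiagonal blocks uniformly through the identity $(\tr_{A/B}(a_k\sigma^j(a_l)))=TP^{(j)}$ rather than block by block. Finally, I note that the hypothesis that no $\sigma^i$ with $1\le i<m$ is inner is not strictly used in this computation; it is what guarantees that $R=B[t^m]$ is the correct central subalgebra and places $S$ inside the twisted tensor product framework of the general theorem, from which the formula may alternatively be deduced.
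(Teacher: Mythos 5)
Your proof is correct, but it takes a different route from the paper: the paper proves Theorem \ref{thm.ore} in two lines by verifying the hypotheses of the general twisted tensor product result (Theorem \ref{thm.twist}) for $M=\NN\subseteq\ZZ$, $H=m\NN$, and then quoting Corollary \ref{cor.ztr} for the factor $d(\kk[t]/\kk[t^m])=_{\kk^\times}(t^{m-1})^m$. What you have done is re-prove the relevant special case of Theorem \ref{thm.twist} directly: your trace-vanishing argument via the $t$-grading is the specialization of Lemma \ref{lem.trace}, your observation that each block row and column contains exactly one (possibly) nonzero block is Lemma \ref{lem.monBasis}, your identity $(\tr_{A/B}(a_k\sigma^j(a_l)))=TP^{(j)}$ with $\det P^{(j)}\in B^\times$ is Lemma \ref{lem.wsigma} together with Notation \ref{not.xsigma}, and your final bookkeeping replaces the Kronecker-product determinant formula. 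The trade-off is clear: the paper's route buys generality (skew group algebras and monoid examples such as Example \ref{ex.monoid} come from the same theorem), while your route buys self-containedness and sharper hypotheses. In particular, your closing remark is accurate and worth emphasizing: the direct computation never uses the assumption that no $\sigma^i$, $1\leq i<m$, is inner; that hypothesis enters the paper's argument only through Lemma \ref{lem.center}, which is needed to identify $A\cap R$ with $B=R\cap A^\sigma$ and hence to verify hypothesis (b) of Theorem \ref{thm.twist} (and, conceptually, to know that $B[t^m]$ is the whole center when $R=C(S)$). Since your argument takes $R=B[t^m]$ central and $A$ free over $B$ as given data, the formula holds in that slightly greater generality. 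One small point of care, which you flag yourself and handle correctly: the block determinant is the product of the block determinants only up to the sign of the block permutation (and the factors $m^n$ and $\det P^{(j)}$), all of which lie in $R^\times$, so the conclusion $=_{R^\times}$ is legitimate.
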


We say an automorphism $\sigma$ of $A$ is {\sf inner} if there exists
$a \in A$ such that $xa = a\sigma(x)$ for all $x \in A$.
This is not the standard definition of an inner automorphism
but it agrees if $a$ is a unit because then $a\inv ba = \sigma(b)$.
Since $a$ is normal and assuming $A$ is a domain,
one can localize at the Ore set of powers of $a$
so as to get an inner automorphism in $A[a\inv]$.
When $\sigma^k$ is an inner automorphism for some
$1 \leq k < m = |\sigma|$ then the center of $A[t;\sigma]$
can be larger than $(C(A) \cap A^\sigma)[t^m]$.
We denote the set of all inner automorphisms of $A$ by $\Inn(A)$.  It is
a routine verification that $\Inn(A)$ forms a subgroup of $\Aut(A)$.

Let $A$ be an algebra and $G$ a finite group that acts on $A$ as
automorphisms. Denote by $\kk G$ the group algebra of $G$.
The {\sf skew group algebra} $A\# G$ has the underlying
set $A \tensor \kk G$ and multiplication defined by
\[ (a \tensor g)(b \tensor h) = a (g.b) \tensor gh \text{ for all }
a,b \in A, \;\; g,h \in G.\]

The natural embedding
\begin{align*}
A &\rightarrow A \# G \\
a &\mapsto a \tensor e
\end{align*}
where $e$ is the identity of $G$,
allows us to identify $A$ with its image in $A \# G$.
If $G$ contains no non-identity inner automorphisms
and acts faithfully on $A$, then by Lemma \ref{lem.center},
$C(A\# G) = C(A)^G$ under the above identification.

\begin{thm}[Theorem \ref{thm.skgrp}]
Let $A$ be an algebra and $G$ a finite group that acts on $A$ as automorphisms
such that no non-identity element of $G$ is inner.
Set $S=A\# G$ and identify $A$ with its image under
the embedding $a \mapsto a \tensor e$.
Suppose $A$ is a finitely generated free over 
the subalgebra $R \subseteq C(A)^G$. 
Then $S$ is finitely generated free over $R$ and
\[ d(S/R) =_{R^\times} d( A/R )^{|G|}.\]
\end{thm}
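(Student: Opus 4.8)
The plan is to prove the skew group algebra formula by the same strategy that (presumably) drives the Ore extension theorem: choose a convenient $R$-basis of $S = A \# G$ adapted to the group grading, compute the regular trace explicitly on products of basis elements, and then recognize the resulting discriminant matrix as a structured (block) matrix whose determinant factors as a power of $d(A/R)$.

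First I would fix an $R$-basis of $A$, say $\{a_1,\dots,a_n\}$, which exists since $A$ is finitely generated free over $R$. Because $A \# G$ is $A \tensor \kk G$ as a left $A$-module and $R \subseteq C(A)^G$ acts centrally, the set $\{a_i \tensor g : 1 \le i \le n,\ g \in G\}$ is an $R$-basis of $S$ of size $n|G|$. This already shows $S$ is finitely generated free over $R$ of rank $n|G|$. The main computation is then the regular trace $\tr_{\reg}^{S/R}$ of a product $(a_i \tensor g)(a_j \tensor h)$. The crucial claim I would aim to establish is a \emph{vanishing} statement: the regular trace of an element $a \tensor g$ of $S$ is zero unless $g = e$, and when $g = e$ it reduces to $|G|$ times the regular trace of $a$ computed for $A$ over $R$. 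This is the skew-group analogue of the fact that in the Ore extension only the ``diagonal in $t$'' terms survive the trace.

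To prove that vanishing, I would compute $\lm_{a \tensor g}$ as an $R$-linear endomorphism of $S$ in the chosen basis. Left multiplication by $a \tensor g$ sends $a_k \tensor h$ to $a(g.a_k) \tensor gh$, so in the $G$-blocking of the basis it shifts the $G$-index from $h$ to $gh$. When $g \ne e$ this is a block-permutation matrix with \emph{no nonzero diagonal blocks}, hence its internal trace over $R$ is $0$; when $g = e$ it is block-diagonal with every diagonal block equal to the matrix of $\lm_a$ acting on $A$, repeated $|G|$ times, whose internal trace is $|G| \cdot \tr_{\reg}^{A/R}(a)$. Here is exactly where the hypothesis that no non-identity element of $G$ is inner, together with faithfulness via Lemma \ref{lem.center}, must be used to guarantee that $R \subseteq C(A)^G = C(S) \cap A$ really is central in $S$ and that the trace form is computed over the correct center; I would invoke $C(A \# G) = C(A)^G$ to ensure $R$ is genuinely a central subalgebra of $S$ so that $d(S/R)$ is well defined.

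With the trace formula in hand, the discriminant matrix $\bigl(\tr_{\reg}^{S/R}((a_i \tensor g)(a_j \tensor h))\bigr)$ has a $(g,h)$ block structure: the $(g,h)$ block is nonzero only when $gh^{-1}$ (or $g^{-1}h$, depending on ordering) contributes an identity component, and on those surviving blocks it equals, up to the $|G|$ factor and the $G$-action twist, the discriminant matrix of $A$ over $R$. I expect the main obstacle to be bookkeeping the group action: the surviving block is $\tr_{\reg}^{A/R}(a_i\, (g.a_j))$ rather than $\tr_{\reg}^{A/R}(a_i a_j)$, so I must argue that applying the automorphism $g$ to the basis $\{a_j\}$ is an $R$-linear change of basis of $A$ and hence alters the $A/R$ discriminant only by a unit in $R^\times$ (indeed, since $g$ fixes $R$, the base-change determinant is a unit). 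Reorganizing the rows and columns by the group element and using multiplicativity of the determinant over the resulting $|G|$ blocks then yields $\det = d(A/R)^{|G|}$ up to $R^\times$, giving the stated formula $d(S/R) =_{R^\times} d(A/R)^{|G|}$.
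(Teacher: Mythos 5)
Your proposal is correct, and mathematically it runs on the same engine as the paper; the difference is purely one of packaging. The paper's own proof of Theorem \ref{thm.skgrp} is a two-line reduction to the general twisted tensor product formula (Theorem \ref{thm.twist}) with $M = G$, $H = \ker\rho = \{e\}$, $\ell = |G|$, so that the factor $d(\kk G/\kk)^n$ is a unit and disappears; all the work you do by hand is done there once, in the monoid setting. Concretely, your vanishing statement $\tr(a \tensor g) = 0$ for $g \neq e$ together with $\tr(a \tensor e) = |G|\,\tr_{A/R}(a)$ is Lemma \ref{lem.trace}; your observation that each row block of the trace matrix meets exactly one nonzero column block is Lemma \ref{lem.monBasis} (trivial for groups, since the unique partner of $g$ is $g^{-1}$); and your change-of-basis argument for the twisted entries $\tr(a_i(g.a_j))$ --- that $g$ is an $R$-linear automorphism because it fixes $R \subseteq C(A)^G$, so the correction factor is the determinant of an invertible matrix over $R$ --- is exactly Notation \ref{not.xsigma} plus Lemma \ref{lem.wsigma} (i.e.\ $W_g = WX_g$ with $\det X_g \in R^\times$). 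What your direct route buys is self-containedness: for a group acting with trivial kernel the coset machinery collapses, and your argument makes visible that the no-inner-automorphism hypothesis is not needed for the trace computation itself --- indeed the inclusion $C(A)^G \subseteq C(A\#G)$, which is all you need for $d(S/R)$ to be defined, holds unconditionally, so your appeal to Lemma \ref{lem.center} there is a sufficient but stronger-than-necessary justification. What the paper's route buys is uniformity: the same general theorem simultaneously yields the Ore extension formula (Theorem \ref{thm.ore}), where $H$ is nontrivial and the factor $d(\kk M/\kk H)^n$ genuinely contributes, rather than being absorbed into units as it is here.
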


The condition that $A$ is a finitely generated free $A^G$-module is satisfied
in case $A$ is a commutative polynomial ring and $G$ is a group generated by reflections
by the classical results of Chevalley \cite{Chev} and Shephard-Todd \cite{ShTo}.
Section \ref{sec.refl} is devoted to showing that such discriminants may be computed
in a manner similar to the discriminant of an algebraic number field; see Proposition \ref{prop.trick}.

Both Ore extensions (by an automorphism)
and skew group algebras are examples of \emph{twisted tensor products}.
We prove a more general formula regarding discriminants of certain twisted
tensor products from which the two previous theorems follow in Section \ref{sec.twistThm}.  
The necessary background for these results is in Sections \ref{sec.twistbg} and \ref{sec.discbg}.
We then apply this result to the case of Ore extensions (Section \ref{sec.ore})
and skew group algebras (Section \ref{sec.skgrp}), 
as well as provide examples of each.
Finally, in Section \ref{sec.autos} we use Theorems 1 and 2 
to compute the automorphism groups of some Ore extensions and skew group algebras.
Many computations herein were assisted by routines in Macaulay2
using the NCAlgebra package\footnote{Available at http://users.wfu.edu/moorewf.}.

We thank the referee for a careful reading of the original manuscript, and many helpful suggestions.
\section{Twisted tensor products and monoid algebras} \label{sec.twistbg}

Let $A$ and $B$ be algebras and let 
$\tau:B \tensor A \rightarrow A \tensor B$ be a $\kk$-linear homomorphism
subject to the conditions that $\tau(b \tensor 1_A)=1_A \tensor b$ and
$\tau(1_B \tensor a) = a \tensor 1_B$, $a \in A$, $b \in B$.
A multiplication on $A \tensor B$ is then given
by $\mu_\tau := (\mu_A \tensor \mu_B) \circ (\id_A \tensor \tau \tensor \id_B)$
where $\mu_A$ and $\mu_B$ are the multiplication maps on $A$
and $B$, respectively.
By \cite[Proposition 2.3]{cap}, $\mu_\tau$ is associative if and only if 
$\tau \circ (\mu_B \tensor \mu_A) 
	= \mu_\tau \circ (\tau \tensor \tau) \circ (\id_B \tensor \tau \tensor \id_A)$
as maps $B \tensor B \tensor A \tensor A \rightarrow A \tensor B$.
The triple  $(A \tensor B,\mu_\tau)$ is
a {\sf twisted tensor product} of $A$ and $B$, 
denoted by $A \tensor_\tau B$.

We are concerned with twisted tensor products when $B$ is the monoid
algebra of a monoid $M$ that acts on $A$ as automorphisms.  
Let $M$ be a monoid and $\rho : M \to \Aut(A)$ a monoid homomorphism
(so that $\rho(mm') = \rho(m)\rho(m')$ and $\rho(e_M) = \id_A$ 
where $e_M$ denotes the identity of $M$).  
For $m \in M$ and $a \in A$, we write $ma$ for $\rho(m)(a)$.
For a monoid $M$, we let $C(M)$ denote the center of $M$.

In this case, one may check that since $\rho$ is a homomorphism, the assignment
\begin{eqnarray*}
\tau : \kk M \otimes A & \to & A \otimes \kk M \\
m \otimes a & \mapsto & ma \otimes m
\end{eqnarray*}
extends linearly to a $\kk$-linear map that makes the multiplication $\mu_\tau$ associative.
We will denote such a twisted tensor product by $A \otimes_\tau \kk M$ when the homomorphism
$\rho : M \to \Aut(A)$ is understood.  We identify the elements of $A$, $\kk M$ and $M$ with
their images in $A \otimes_\tau \kk M$ under the canonical embeddings
$$A \to A \otimes_\tau \kk M \from \kk M \from M.$$

For such an action of a monoid $M$ on an algebra $A$, we let $A^M$ denote the set
$$A^M = \{a \in A~|~ma = a~\text{for all}~m\in M\}.$$
It is easy to check that $A^M$ is a subalgebra of $A$.  Furthermore, since the center
of an algebra is preserved under any automorphism, $M$ acts on $C(A)$ as well, so one
may also consider $C(A)^M$.

\begin{remark} \label{rem.OreAndSkgpAreTwists}
The two main applications of interest are Ore extensions (by an automorphism) and
skew group algebras, and each fit into this framework.  In the case of an Ore
extension $A[t;\sigma]$ for an automorphism $\sigma$ of $A$, $A[t;\sigma] \cong A \otimes_\tau \kk\NN$
where $\rho : \NN \to \Aut(A)$ sends $1$ to $\sigma$.
Similarly, a finite group $G$ acting on $A$ as automorphisms is the same as a group homomorphism
$\rho : G \to \Aut(A)$, and one may check that $A \# G = A \otimes_\tau \kk G$.
\end{remark}

To better explain some of the hypotheses we will need in our theorem regarding discriminants
of twisted tensor products, we must discuss the center of $A \otimes_\tau \kk M$.

\begin{lemma}
\label{lem.center}
Let $A$ be an algebra, 
$M$ a monoid that acts on $A$ through the monoid homomorphism $\rho : M \to \Aut(A)$,
and $H = \ker \rho$.
Suppose that $H \subseteq C(M)$ and $\im \rho \cap \Inn(A) = \{\id_A\}$.
Then $C(A \tensor_\tau \kk M) = C(A)^M \otimes \kk H$.
\end{lemma}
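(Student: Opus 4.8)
The plan is to prove the two inclusions separately, using that $\{m\}_{m\in M}$ is a $\kk$-basis of $\kk M$, so that every element of $A \otimes_\tau \kk M$ is written uniquely as a finite sum $z = \sum_{m \in M} a_m \otimes m$ with $a_m \in A$; in particular the algebra is free as a left $A$-module on $\{1_A \otimes m\}_{m\in M}$. Since the images of $A$ and of $M$ generate $A \otimes_\tau \kk M$ (one has $a \otimes m = (a \otimes e_M)(1_A \otimes m)$), an element $z$ is central if and only if it commutes with every $b \otimes e_M$ and with every $1_A \otimes n$. The reverse inclusion $C(A)^M \otimes \kk H \subseteq C(A \otimes_\tau \kk M)$ is then a direct check: for $a \in C(A)^M$ and $h \in H$, using $\rho(h) = \id_A$ and $a \in C(A)$ one gets $(a \otimes h)(b \otimes e_M) = ab \otimes h = (b \otimes e_M)(a \otimes h)$, while using $\rho(n)(a) = a$ and $h \in C(M)$ one gets $(a \otimes h)(1_A \otimes n) = a \otimes hn = a \otimes nh = (1_A \otimes n)(a \otimes h)$.

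For the forward inclusion I start with a central $z = \sum_m a_m \otimes m$ and first impose commutation with $b \otimes e_M$. Expanding both products with the rule $(a \otimes m)(b \otimes n) = a\,\rho(m)(b) \otimes mn$ and comparing coefficients of each basis vector yields $b\,a_m = a_m\,\rho(m)(b)$ for all $b \in A$ and all $m$. For every index $m$ with $a_m \neq 0$, this equation exhibits $a_m$ as a nonzero witness that $\rho(m)$ is inner in the sense of the paper, so $\rho(m) \in \im\rho \cap \Inn(A) = \{\id_A\}$; hence $\rho(m) = \id_A$ and $m \in \ker\rho = H$. Thus the support of $z$ lies in $H$, i.e. $z = \sum_{m \in H} a_m \otimes m$, and for these surviving indices $\rho(m) = \id_A$ reduces the equation to $b\,a_m = a_m\,b$, so that $a_m \in C(A)$.

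It remains to upgrade $a_m \in C(A)$ to $a_m \in C(A)^M$. I impose commutation with $1_A \otimes n$: since $m \in H \subseteq C(M)$ we have $mn = nm$, and comparing coefficients in $z(1_A \otimes n) = (1_A \otimes n)z$ gives, for each $p \in M$, the fiberwise identity
\[
\sum_{\substack{m \in H \\ nm = p}} a_m \;=\; \sum_{\substack{m \in H \\ nm = p}} \rho(n)(a_m).
\]
The desired pointwise relation $\rho(n)(a_m) = a_m$ then follows once each fiber $\{m : nm = p\}$ meeting the support of $z$ is a singleton, which holds because translation in $M$ is injective in the cases of interest ($M = \NN$ for Ore extensions and $M = G$ a group for skew group algebras). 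Granting this, $\rho(n)(a_m) = a_m$ for all $n \in M$, so $a_m \in C(A)^M$ and $z \in C(A)^M \otimes \kk H$.

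I expect this final extraction — passing from the fiberwise relations to genuine pointwise $M$-invariance of each coefficient — to be the main obstacle, since it is the only place where structure of $M$ beyond $H \subseteq C(M)$ is needed, and it must be handled carefully (via the freeness of $A \otimes_\tau \kk M$ over $A$ together with injectivity of the relevant translations) to keep the argument valid for the monoids arising in the two applications.
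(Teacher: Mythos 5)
Your proposal follows the paper's own argument essentially step for step: commuting a central element $\sum_m a_m \tensor m$ with $b \tensor e_M$ exhibits each nonzero $a_m$ as a normal element witnessing that $\rho(m)$ is inner, so the hypothesis $\im\rho \cap \Inn(A) = \{\id_A\}$ forces $m \in H$ and $a_m \in C(A)$; then commuting with $1 \tensor n$ and using $H \subseteq C(M)$ gives invariance of the coefficients, and the reverse inclusion is the routine check that the paper leaves implicit. The one place you diverge is the step you flag as the main obstacle: passing from the fiberwise identities (sums over $\{m : nm = p\}$) to the pointwise relation $\rho(n)(a_m) = a_m$. The paper performs this coefficient comparison silently; that is, it implicitly assumes right translation by $n$ is injective on the support, which is exactly the condition you make explicit and verify only for $M = \NN$ and for $M$ a group.

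Your caution here is not a defect but is fully justified: without some cancellation hypothesis the lemma as stated is false, so the gap you identify cannot be closed in general. Take $A = \kk[x]$ and $\sigma \in \Aut(A)$ with $\sigma(x) = 2x$, and let $M$ be the commutative monoid generated by $a,b,c$ subject only to the relation $ac = bc$, so that $a \neq b$ but right translation by $c$ identifies them. Since $\id_A,\id_A,\sigma$ satisfy the defining relation in $\Aut(A)$, there is a monoid homomorphism $\rho : M \to \Aut(A)$ with $\rho(a) = \rho(b) = \id_A$ and $\rho(c) = \sigma$. All hypotheses of the lemma hold: $H = \ker\rho$ is the set of words of $c$-degree zero, $C(M) = M \supseteq H$, and $\Inn(A) = \{\id_A\}$ because $A$ is a commutative domain; moreover $C(A)^M = \kk[x]^\sigma = \kk$. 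Yet the element $z = x \tensor (a-b)$ is central in $T = A \tensor_\tau \kk M$: it commutes with every $y \tensor e_M$ (since $a,b \in \ker\rho$ and $A$ is commutative), with $1 \tensor a$ and $1 \tensor b$ (since $M$ is commutative), and with $1 \tensor c$, because $z(1 \tensor c) = x \tensor (ac - bc) = 0$ while $(1 \tensor c)z = 2x \tensor (ca - cb) = 0$; and these elements generate $T$ as an algebra. Since $x \notin \kk$, we have $z \notin C(A)^M \tensor \kk H$. So the fiberwise relations really are weaker than the pointwise ones, and a cancellativity-type hypothesis---automatic in the paper's applications, where $M$ embeds in a group as in Theorem \ref{thm.twist}---is genuinely needed. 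Your proof, with its explicit injectivity assumption, establishes the lemma precisely on its actual domain of validity, and in that respect is more careful than the source.
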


\begin{proof}
Let $T=A \tensor_\tau \kk M$ and choose $\sum_{m \in M} a_m \tensor m \in C(T)$.
Then for all $x \in A$ we have
\[
	\left(\sum_{m \in M} a_m \tensor m \right)(x \tensor e_M)
		= \sum_{m \in M} a_m (mx) \tensor m.
\]
On the other hand,
\[	(x \tensor e_M)\left(\sum_{m \in M} a_m \tensor m \right)
		= \sum_{m \in M} x a_m \tensor m.
\]
Since $M$ acts as automorphisms, each nonzero $a_m$ is a normal element corresponding
to the automorphism induced by $m$.
This implies that if $a_m \neq 0$, then $m$ induces an inner automorphism of $A$, and hence by hypothesis
$m \in \ker \rho$.  Note that in this case it also follows that $a_m \in C(A)$, so that each term in the
sum $\sum_{m \in M} a_m \tensor m$ has $a_m \in C(A)$ and $m \in H$.

Now for all $m' \in M$, one has
\begin{equation*}
\left(\sum_{m \in H} a_m \tensor m \right)(1 \tensor m') = \sum_{m \in M} a_m \tensor mm'.
\end{equation*}
On the other hand, 
\begin{eqnarray*}
(1 \otimes m')\left(\sum_{m \in H} a_m \tensor m \right) & = & \sum_{m \in M} m'a_m \tensor m'm \\
   & = & \sum_{m \in M} m'a_m \tensor mm' \\
\end{eqnarray*}
where the second equality follows since $H \subseteq \ker \rho$.  Therefore we have that $m'a_m = a_m$ for
all $m' \in M$, hence $a_m \in C(A)^M$.  The claim now follows.
\end{proof}

We note that the hypothesis $H \subseteq \ker \rho$ in Lemma \ref{lem.center} is trivially satisfied
when $M$ is a group and $M$ acts faithfully on $A$ 
(as is the case in most skew group algebra computations),
as well as when $\NN$ acts on $A$ as an automorphism $\sigma$ (as in the Ore extension case).

The hypothesis $\im \rho \cap \Inn(A) = \{\id_A\}$ is a bit more restrictive however.  Indeed, extending Lemma \ref{lem.center} to the case where a non-identity 
element of $M$ acts as an inner automorphism is nontrivial in general, 
but can be done for an Ore extension of a domain.   
For an automorphism $\sigma$ of $A$ we define
\[ N(\sigma) = \{ a \in A^\sigma: x a = a \sigma(x) \text{ for all } x \in A\}.\]
Note that $N(\sigma) = \{ 0 \}$ when $\sigma$ is not an inner automorphism 
induced by an element of $A^\sigma$, $N(1_A) = A^\sigma \cap C(A)$, 
and if $|\sigma| = m$ then $N(\sigma^\ell) = N(\sigma^i)$, 
where $\ell \equiv i \mod m$.

\begin{lemma}
Let $A$ be a domain, $\sigma$ an automorphism of $A$ 
with $|\sigma| = m$, and set $S = A[t;\sigma]$.  Then $C(S) = \oplus N(\sigma^i)[t^i]$,
where $N(\sigma^i)$ is nonzero if $\sigma^i$ an inner automorphism 
induced by an element of $A^\sigma$.
\end{lemma}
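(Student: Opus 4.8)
The plan is to exploit the $\NN$-grading of $S=A[t;\sigma]$ by $t$-degree. Since $\sigma$ is an automorphism, $S$ is free as a left $A$-module with basis $\{t^i\}_{i\ge 0}$, so every $z\in S$ has a unique expression $z=\sum_{i\ge 0}a_it^i$ with $a_i\in A$ (finitely many nonzero). Because $S$ is generated as a $\kk$-algebra by $A$ together with $t$, an element $z$ is central if and only if it commutes with every $x\in A$ and with $t$; this reduces the problem to two coefficient comparisons.

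First I would compute the commutator of $z=\sum_i a_it^i$ with an arbitrary $x\in A$, using the defining relation $t^ix=\sigma^i(x)t^i$. This gives
\[ zx-xz=\sum_{i\ge 0}\bigl(a_i\sigma^i(x)-xa_i\bigr)t^i, \]
and, comparing coefficients of the basis elements $t^i$, centrality against $A$ is equivalent to $xa_i=a_i\sigma^i(x)$ for all $x\in A$ and all $i$. Next I would compute the commutator with $t$, using $ta_i=\sigma(a_i)t$, to obtain $zt-tz=\sum_i(a_i-\sigma(a_i))t^{i+1}$, whence $a_i=\sigma(a_i)$, i.e. $a_i\in A^\sigma$. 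Taken together these are precisely the defining conditions for $a_i\in N(\sigma^i)$, so $z\in\bigoplus_{i\ge 0}N(\sigma^i)\,t^i$; the reverse inclusion is the same computation read backwards, and the sum is direct because it respects the $t$-grading. Since $|\sigma|=m$ forces $\sigma^i$ to depend only on $i\bmod m$ (and makes $t^m$ central), the summands are $m$-periodic, which matches the stated form.

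For the final assertion I would argue that $N(\sigma^i)\neq\{0\}$ exactly when $\sigma^i$ is inner (in the sense of the introduction) and is induced by an element of $A^\sigma$: a nonzero $a\in N(\sigma^i)$ is by definition an element of $A^\sigma$ satisfying $xa=a\sigma^i(x)$, which exhibits $a$ as a witness to $\sigma^i$ being inner, and conversely any such witness lies in $N(\sigma^i)\setminus\{0\}$. This is the one place the domain hypothesis genuinely enters: it guarantees that a nonzero $a$ is a non-zero-divisor, so that $a$ is a bona fide normal element realizing $\sigma^i$ (one may localize at the powers of $a$ as in the introduction) rather than a vacuous solution.

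I expect the computation itself to be routine; the points that require care are the legitimacy of comparing coefficients (which rests on the freeness of $S$ over $A$, and hence on $\sigma$ being bijective) and the bookkeeping that identifies the two coefficient conditions with membership in $N(\sigma^i)$ while correctly tracking the $m$-periodicity. The nonvanishing criterion is then essentially a restatement of the definitions, the domain hypothesis serving to ensure that ``inner'' has its intended meaning.
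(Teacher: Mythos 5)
Your proof is correct and follows essentially the same route as the paper's: decompose a central element by $t$-degree, commute with $t$ to force $a_i \in A^\sigma$, commute with elements of $A$ to force $xa_i = a_i\sigma^i(x)$ (i.e. $a_i \in N(\sigma^i)$), and read off the nonvanishing claim from the definition of inner. The only cosmetic difference is that you justify the coefficient comparison by freeness of $S$ over $A$ with basis $\{t^i\}$, whereas the paper cancels $t^i$ using that $S$ is a domain; both are valid.
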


\begin{proof}
The containment $\supseteq$ is clear, so 
suppose $\sum a_i t^i \in C(S)$;
then by the $t$-grading, $a_it^i \in C(S)$ for each $i$. Hence,
\[ \left(\sigma(a_i) t^i\right)t = t\left(a_i t^i\right) = \left(a_i t^i\right)t.\]
Thus $\sigma(a_i)=a_i$ for each $i$, so that $a_i \in A^\sigma$.
For $a_it^i \in C(S)$ and any $b \in A$, we have
$ba_it^i = a_it^ib = a_i\sigma^i(b)t^i$.
Because $S$ is a domain, then $ba_i=a_i\sigma^i(b)$ for all $i$, 
and $a_i \in N(\sigma^i)$.
Hence, for $i \not\equiv 0 \mod m$ we have 
that $\sigma^i$ is a non-identity inner automorphism induced by $a_i$.
\end{proof}

Before closing this section, we will need a lemma that characterizes
when an extension of a monoid algebras is a free extension.  Before stating the
lemma, we need a definition.

\begin{definition}
A monoid $M$ is {\sf left cancellative} provided for all $a,b,c \in M$, 
the equality $ab = ac$ implies that $b = c$.  
One similarly defines {\sf right cancellative} monoid, 
and $M$ is called {\sf cancellative} if $M$ is both left and right cancellative.
\end{definition}

\begin{lemma} \label{lem.basisCosets}
Let $H$ be a submonoid of a cancellative monoid $M$, 
and let $\cB = \{m_1,\dots,m_\ell\}$ be a subset of $M$.  
Then $\mathcal{B}$ is a basis of $\kk M$ as a left (respectively right) 
$\kk H$-module if and only if the right (respectively left) cosets of $H$
represented by all the elements of $\mathcal{B}$ are disjoint, 
and $M$ is the union of the right (respectively left) cosets of $H$ 
represented by all the elements of $\mathcal{B}$.
\end{lemma}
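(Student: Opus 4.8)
\emph{Proof proposal.} The plan is to work throughout with the natural $\kk$-basis of $\kk M$, namely the set $M$ of monoid elements, and to reduce the module-theoretic claim to an elementary statement about subsets of a basis. I will treat the left $\kk H$-module case in detail; the right-module case follows by the symmetric argument (equivalently, by passing to the opposite monoid, in which right cosets become left cosets and cancellativity is preserved). The starting observation is that for each $i$ the cyclic left submodule $\kk H \cdot m_i \subseteq \kk M$ is precisely the $\kk$-span of the right coset $H m_i = \{h m_i : h \in H\}$, which is a subset of the basis $M$.

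First I would record that each $m_i$ is automatically a \emph{free} generator of $\kk H \cdot m_i$. Indeed, right cancellativity of $M$ says that $h m_i = h' m_i$ forces $h = h'$, so $h \mapsto h m_i$ is a bijection from $H$ onto $H m_i$; hence the $\kk H$-linear map $\kk H \to \kk H m_i$, $x \mapsto x m_i$, carries the $\kk$-basis $H$ of $\kk H$ bijectively onto the $\kk$-basis $H m_i$ of $\kk H m_i$ and is therefore an isomorphism of left $\kk H$-modules. Consequently ``$\cB$ is a left $\kk H$-basis of $\kk M$'' is equivalent to ``$\kk M = \bigoplus_{i=1}^\ell \kk H m_i$ as an internal direct sum,'' the rank-one freeness of each summand being granted for free.

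Next I would split this into spanning and independence and read each off in the basis $M$. Since every $\kk H m_i = \operatorname{span}_\kk(H m_i)$ is a coordinate subspace, I would invoke the elementary fact that for subsets $E_i$ of a basis one has $\sum_i \operatorname{span}(E_i)$ equal to the whole space iff $\bigcup_i E_i$ exhausts the basis, and the sum is direct iff the $E_i$ are pairwise disjoint. Concretely, spanning over $\kk H$ amounts to the covering condition $M = \bigcup_i H m_i$: writing a basis element $x \in M$ as $\sum_i f_i m_i$ with $f_i \in \kk H$ and expanding the $f_i$ in $H$ exhibits $x$ as a $\kk$-combination of the elements $h m_i \in M$, forcing $x = h m_i$ for some pair and hence $x \in H m_i$. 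Dually, a failure of disjointness, say $h m_i = h' m_j$ with $i \neq j$, produces the nontrivial left $\kk H$-relation $h \cdot m_i - h' \cdot m_j = 0$, contradicting independence; conversely pairwise disjointness of the $H m_i$ makes the coordinate subspaces independent.

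I do not expect a serious obstacle here, but the point to handle with care is that, unlike for subgroups, cosets of a \emph{submonoid} need not partition $M$ (for instance the even numbers inside $(\NN,+)$), so disjointness is a genuine hypothesis that cannot be extracted from covering alone; this is exactly why both conditions appear in the statement. It is also worth noting that only right cancellativity is actually used in the left-module case (to separate the elements of a single coset), with left cancellativity playing the mirror role in the right-module case. Assembling the spanning and independence equivalences yields the desired characterization, and the right-module statement then follows verbatim with $H m_i$ replaced by $m_i H$.
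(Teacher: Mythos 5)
Your proof is correct and follows essentially the same route as the paper's: both arguments work in the monomial $\kk$-basis $M$ of $\kk M$ and translate the $\kk H$-basis condition into the statement that every $m \in M$ factors uniquely as $h m_i$, which is exactly disjointness plus covering of the right cosets. The paper compresses this into two sentences, while you spell out the details it leaves implicit — in particular the role of right cancellativity in making each $\kk H m_i$ free on $m_i$, which is genuinely needed and is a worthwhile point to make explicit.
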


\begin{proof}
Note that $\mathcal{B}$ is a basis of $\kk M$ as a left $\kk H$-module
if and only if for each $m \in M$ there exists a unique $h \in H$ and $1 \leq i \leq \ell$
such that $m = hm_i$.  This is precisely the statement that the right cosets of $H$
by all the elements of $\mathcal{B}$ are disjoint and cover $M$.
\end{proof}

A source of examples of cancellative monoids are submonoids of groups.  
We record here a lemma that we will use later regarding extensions 
of monoid algebras in this context.

\begin{lemma} \label{lem.monBasis}
Let $M$ be a monoid which embeds in a group $G$, $K$ a group,
$\rho : G \to K$ a group homomorphism, 
and $H = \ker \rho \cap M$.
Suppose that, for all $g \in \ker \rho$, one has $g \in M$ or $g^{-1} \in M$. 
Then:
\begin{enumerate}[(a)]
\item \label{lem.monBasis.a} For all $m_1,m_2 \in M$, $\rho(m_1) =
  \rho(m_2)$ if and only if either $Hm_1 \subseteq Hm_2$ or $Hm_2
  \subseteq Hm_1$, and similarly for left cosets.
\item Suppose that $\kk M$ is a free left or right $\kk H$-module with basis
  $\{m_1,\dots,m_\ell\}$ with $m_i \in M$ and $m_1 = e_M$.  Then for
  every $j = 1,\dots,\ell$, there exists a unique $i$ such that
  $m_im_j \in H$.
\end{enumerate}
\end{lemma}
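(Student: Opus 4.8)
The plan is to prove Lemma \ref{lem.monBasis} in two parts, establishing part (a) first and then using it to deduce part (b).

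For part (a), I want to understand when two elements $m_1, m_2 \in M$ satisfy $\rho(m_1) = \rho(m_2)$. Since $\rho$ is a group homomorphism and $M$ embeds in $G$, the condition $\rho(m_1) = \rho(m_2)$ is equivalent to $m_1 m_2^{-1} \in \ker\rho$ (working inside $G$). The key hypothesis to exploit is that every element $g \in \ker\rho$ satisfies $g \in M$ or $g^{-1} \in M$, which lets me relate membership in $\ker\rho$ to membership in the submonoid $H = \ker\rho \cap M$. So first I would write $\rho(m_1) = \rho(m_2)$ iff $m_1 m_2^{-1} \in \ker\rho$, then split into the two cases provided by the hypothesis: if $m_1 m_2^{-1} \in M$, then $m_1 m_2^{-1} \in H$, which after multiplying by $m_2$ gives a coset containment $Hm_1 \subseteq Hm_2$ (using cancellativity in $G$ to make the coset manipulations rigorous); if instead $(m_1 m_2^{-1})^{-1} = m_2 m_1^{-1} \in M$, then symmetrically $Hm_2 \subseteq Hm_1$. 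Conversely, a coset containment $Hm_1 \subseteq Hm_2$ forces $m_1 = h m_2$ for some $h \in H \subseteq \ker\rho$, whence $\rho(m_1) = \rho(h)\rho(m_2) = \rho(m_2)$ since $\rho(h) = \id$. The left-coset statement is proved by the same argument with sides reversed.

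For part (b), I would use Lemma \ref{lem.basisCosets}: since $\{m_1,\dots,m_\ell\}$ is a $\kk H$-basis of $\kk M$, the corresponding cosets of $H$ are disjoint and cover $M$. Fix $j$. The goal is to produce a unique $i$ with $m_i m_j \in H$, equivalently $\rho(m_i m_j) = \id$, i.e. $\rho(m_i) = \rho(m_j)^{-1} = \rho(m_j^{-1})$. Here I would exploit the hypothesis applied to $m_j^{-1}$: either $m_j^{-1} \in M$ or $m_j \in M$ (the latter holds by assumption), so I need the existence direction to find an index whose image under $\rho$ equals $\rho(m_j^{-1})$. Since the basis cosets cover $M$ and, by part (a), elements of $M$ with the same $\rho$-image lie in nested (hence, by disjointness of distinct basis cosets, equal) cosets, each $\rho$-value achieved by an element of $M$ is realized by exactly one basis element $m_i$. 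The main point is that $m_j^{-1}$ (or an appropriate associate lying in $M$) has a well-defined $\rho$-image that must coincide with $\rho(m_i)$ for a unique basis index $i$, giving $m_i m_j \in \ker\rho \cap M = H$.

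I expect the main obstacle to be the bookkeeping in part (b): one must be careful that the element whose $\rho$-image equals $\rho(m_j^{-1})$ actually lies in $M$ so that the covering property of the basis cosets applies, and that the resulting product $m_i m_j$ lands in $M$ (not merely in $\ker\rho \subseteq G$) so that it lies in $H$ rather than only in $\ker\rho$. This is exactly where the hypothesis ``for all $g \in \ker\rho$, $g \in M$ or $g^{-1} \in M$'' does its work, and the normalization $m_1 = e_M$ guarantees that the identity $\rho$-value is represented, anchoring the uniqueness argument. Disjointness of the basis cosets, together with part (a)'s characterization of equal $\rho$-images as nested cosets, is what upgrades existence to the required uniqueness of $i$.
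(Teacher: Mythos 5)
Your part (a) is correct and is essentially the paper's argument: translate $\rho(m_1)=\rho(m_2)$ into $m_1m_2^{-1}\in\ker\rho$, apply the hypothesis to land in $H$, and convert to a coset containment; the converse is immediate since $H\subseteq\ker\rho$. No issues there.

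Part (b), however, has a genuine gap in the existence direction. Your uniqueness argument (same $\rho$-image forces nested, hence equal, basis cosets) is fine, but to produce the index $i$ at all you need some element of $M$ whose $\rho$-image equals $\rho(m_j)^{-1}$, and your proposal never establishes this. You appeal to the hypothesis ``for all $g\in\ker\rho$, $g\in M$ or $g^{-1}\in M$'' applied to $m_j^{-1}$, but that hypothesis only concerns elements of $\ker\rho$, and $m_j^{-1}$ is generally not in $\ker\rho$; moreover the disjunction you actually wrote, ``$m_j^{-1}\in M$ or $m_j\in M$,'' is vacuously true and yields nothing. (Your other worry, that $m_im_j$ might not lie in $M$, is a non-issue: $M$ is a monoid, so $m_im_j\in M$ automatically, and then $m_im_j\in\ker\rho\cap M=H$.) The real engine of existence is the \emph{finiteness} of the basis, which your outline never uses. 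To see it is indispensable: take $M=\NN\subseteq G=\ZZ=K$ with $\rho=\id$, so the kernel hypothesis holds trivially and $H=\{0\}$; no element of $\NN$ maps to $-1$, and the only reason part (b) is not contradicted is that $\kk\NN$ is free over $\kk H=\kk$ only on an \emph{infinite} basis. The paper closes this gap by a pigeonhole argument: right multiplication by $m_j$ followed by taking coset representatives defines a self-map $\Phi_j$ of the finite set $\{Hm_1,\dots,Hm_\ell\}$, which is injective by part (a) together with disjointness of the basis cosets, hence surjective; since $Hm_1=H$ is one of the cosets, surjectivity produces the desired $i$ with $m_im_j\in H$, and injectivity gives uniqueness. (An equivalent fix: among the powers $m_j^0,m_j,m_j^2,\dots$ two must share a coset, so $\rho(m_j)^{k}=\rho(m_j)^{k'}$ for some $k<k'$, whence $\rho(m_j^{\,k'-k-1})=\rho(m_j)^{-1}$ with $m_j^{\,k'-k-1}\in M$.) Without some argument of this kind, your existence claim is unproven.
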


\begin{proof}
We prove each of the claims in the case of right cosets and a left module structure as in Lemma \ref{lem.basisCosets}.

It is easy to see that either containment of cosets above implies $\rho(m_1) = \rho(m_2)$.
Conversely, suppose that $\rho(m_1) = \rho(m_2)$.  Then $m_1m_2^{-1} \in \ker \rho$
so by hypothesis either $m_1m_2^{-1} \in H$ or $m_2m_1^{-1} \in H$.  In the first case, one has $Hm_1 \subseteq Hm_2$ and in the
second case, one has $Hm_2 \subseteq Hm_1$, proving part (\ref{lem.monBasis.a}).

For the second claim, Lemma \ref{lem.basisCosets} shows that given any element $m \in M$, there exists
a unique $i$ such that $m \in Hm_i$.  We will denote the assignment $m \mapsto m_i$ by the notation $\rep(m)$.
Fix $1 \leq j \leq n$  and consider the following function defined on the cosets:
\begin{align*}
\Phi_j : \{Hm_1,\dots,Hm_\ell\} &\rightarrow \{Hm_1,\dots,Hm_\ell\} \\
      Hm_i &\mapsto H\rep(m_im_j).
\end{align*}
Suppose that $\Phi_j(m_i) = \Phi_j(m_{i'})$.  Then one has that $m_im_j$ and $m_{i'}m_j$
are in the same coset $Hm_k$ for some $k$.  It follows that $\rho(m_i) = \rho(m_{i'})$ and hence
by part (\ref{lem.monBasis.a}) one has that $Hm_i$ and $Hm_{i'}$ intersect nontrivially.
Therefore by Lemma \ref{lem.basisCosets} one has $m_i = m_{i'}$.
This shows that $\Phi_j$ is one-to-one and hence onto.  Since we chose $m_1 = e_M$,
one of the cosets is $H$ hence given any $j$, there is a unique $i$ such that $m_im_j \in H$
as claimed.
\end{proof}

Note that $\NN$ is a submonoid of the group $\ZZ$ that intersects every subgroup of $\ZZ$ 
nontrivially, and hence fits into the framework of each of the two previous lemmas.  
Furthermore, if $M$ is a group
then each of the two lemmas' hypotheses hold trivially.  
Therefore, these hypotheses are not
restrictive from the point of view of our intended applications.

\section{Background on discriminants} \label{sec.discbg}

In this section, we assume that $A/B$ is a free extension of algebras with
$B \subseteq C(A)$.

\begin{notation} \label{not.xsigma}
Suppose $\sigma \in \Aut(A)$ and that $\sigma$ restricts to the identity on $B$.  
Then $\sigma : A \to A$ is a $B$-module homomorphism.  
Therefore, given a basis $\{x_1,\dots,x_g\}$ of $A$
as a $B$-module, we can represent $\sigma$ using a matrix with entries in $B$ which
we denote $X_\sigma$.  That is, if $x \in A$ and the coordinate vector of $x$ with
respect to the chosen basis is $\boldsymbol{x}$, then $\sigma(x) = X_\sigma\boldsymbol{x}$.
Note that the determinant of $X_\sigma$ is independent
of the chosen basis of $A$ over $B$, and is an element of $B^\times$.
We therefore denote $\det X_\sigma$ by $\det_{A/B} \sigma$.

If $\rho : M \to \Aut(A)$ is a monoid homomorphism and $m \in M$, we
write $X_m$ and $\det_{A/B} m$ to denote $X_{\rho(m)}$ and $\det_{A/B} \rho(m)$,
respectively.
\end{notation}

In the following definition we consider $\sigma \in \Aut(A)$ 
that restricts to the identity on $B$
to twist the standard trace pairing.
This will be necessary for our calculations that appear in Section \ref{sec.twistThm}.
Recall that for an algebra $A$, we use $\mu_A$ to denote multiplication on $A$.

\begin{definition}
\label{def.traceform}
Let $\sigma \in \Aut(A)$ such that $\sigma$ restricts to the identity on $B$.
Define the {\sf trace form of the extension $A/B$ twisted by $\sigma$}
(denoted $\tr_{A/B,\sigma}$) to be the $B$-bilinear pairing given by the
composition
\begin{equation*}
\tr_{A/B,\sigma} : A \times A \xra{\id_A \times \sigma} A \times A \xra{\mu_A} A \xra{\tr} B.
\end{equation*}
That is, $\tr_{A/B,\sigma}(y,z) = \tr(y\sigma(z))$.  In the case $\sigma = \id_A$,
we use $\tr_{A/B}$ to denote $\tr_{A/B,\id_A}$.
\end{definition}
Note that $\tr_{A/B,\sigma}(y,z) = \tr_{A/B,\sigma}(\sigma(z),\sigma^{-1}(y))$, so that
this bilinear pairing need not be symmetric for a general $\sigma$, but it is symmetric if $\sigma = \id_A$.

\begin{notation} \label{not.wsigma}
Given a basis $\{x_1,\dots,x_g\}$ of $A$ as a $B$-module,
the matrix of $\tr_{A/B,\sigma}$  with respect to this basis
is $W_\sigma = (\tr(x_i\sigma(x_j)))_{ij}$.
In this way, if $y$ and $z$ have representatives in the 
above basis given by vectors $\boldsymbol{y}$ and $\boldsymbol{z}$,
then $\tr_{A/B,\sigma}(y,z) = \boldsymbol{y}^TW_\sigma\boldsymbol{z}$.
We let $W$ denote the matrix $W_{\id_A}$.
\end{notation}

\begin{definition}[\cite{CPWZ1}]
For a free extension $A/B$, we define the {\sf discriminant of $A$ over $B$}
to be determinant of the matrix $W$ representing the trace pairing $\tr_{A/B}$
with respect to some chosen basis of $A$ over $B$, as in Notation \ref{not.wsigma}.
\end{definition}

\begin{lemma} \label{lem.wsigma}
Let $A/B$ be a free extension of algebras, let $\sigma \in \Aut(A)$
restrict to the identity on $B$, and let $\{x_1,\dots,x_g\}$ be a basis of $B$
over $A$.  Then using the same notation appearing in \ref{not.xsigma} and \ref{not.wsigma},
one has $W_\sigma = WX_\sigma$ and therefore $\det(W_\sigma) = \det(W)\det_{A/B} \sigma$.
\end{lemma}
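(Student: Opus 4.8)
The plan is to compute the matrix $W_\sigma = (\tr(x_i \sigma(x_j)))_{ij}$ by expanding each entry using the definitions in Notation \ref{not.xsigma} and \ref{not.wsigma}. The key observation is that $\sigma$ acts as a $B$-module homomorphism (since it fixes $B$ pointwise), so $\sigma(x_j) = \sum_k (X_\sigma)_{kj} x_k$, where $(X_\sigma)_{kj} \in B$ is the $(k,j)$-entry of the matrix $X_\sigma$. First I would substitute this expansion into the $(i,j)$-entry of $W_\sigma$.

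The main computation is then the following manipulation. Since the trace $\tr$ is $B$-linear and the entries $(X_\sigma)_{kj}$ lie in $B$, I can pull them out of the trace:
\begin{equation*}
(W_\sigma)_{ij} = \tr(x_i \sigma(x_j)) = \tr\Bigl(x_i \sum_k (X_\sigma)_{kj} x_k\Bigr) = \sum_k \tr(x_i x_k)(X_\sigma)_{kj} = \sum_k W_{ik} (X_\sigma)_{kj}.
\end{equation*}
This last sum is exactly the $(i,j)$-entry of the matrix product $W X_\sigma$, which establishes $W_\sigma = W X_\sigma$. Taking determinants and using multiplicativity gives $\det(W_\sigma) = \det(W)\det(X_\sigma) = \det(W)\det_{A/B}\sigma$, where the final equality is just the definition of $\det_{A/B}\sigma$.

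The only step requiring care is justifying that the scalar $(X_\sigma)_{kj}$ passes through the regular trace, i.e.\ that $\tr(x_i \, b \, x_k) = b\,\tr(x_i x_k)$ for $b \in B$. This is where the hypothesis $B \subseteq C(A)$ is essential: because $b$ is central, $x_i b x_k = b\, x_i x_k$, and the regular trace $\tr = \tr_{\reg}$ is $B$-linear (it is the internal matrix trace composed with left multiplication, and left multiplication by a central element acts as scalar multiplication by $b$ on the free module $A$, whose matrix is $b$ times the identity, so its interaction with $\tr_\int$ is $B$-linear). I expect this centrality-and-linearity verification to be the main (though still routine) obstacle, since everything else is a direct index computation. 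I would state it as a one-line justification rather than belabor it.
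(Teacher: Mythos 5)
Your proof is correct and follows essentially the same route as the paper's: both rest on the fact that the coordinate vector of $\sigma(x_j)$ is the $j$-th column of $X_\sigma$ together with $B$-bilinearity of the trace pairing, the only difference being that you compute the entries $(W_\sigma)_{ij}$ directly on basis elements while the paper phrases the identical computation via coordinate vectors of arbitrary $y,z \in A$. Your explicit verification that $\tr(x_i b x_k) = b\,\tr(x_i x_k)$ is a point the paper absorbs into its assertion that $\tr_{A/B,\sigma}$ is $B$-bilinear, so nothing is missing on either side.
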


\begin{proof}
Let $y$ and $z$ be elements of $A$, with representations $\boldsymbol{y}$ and $\boldsymbol{z}$
in the chosen basis, respectively.  Then since $\tr_{A/B,\sigma} = \tr_{A/B}\circ(\id_A \times \sigma)$
one has the following string of equalities from which the claim follows:
\begin{equation*}
\tr_{A/B,\sigma}(y,z) = \tr_{A/B}(y,\sigma(z)) = \boldsymbol{y}^TWX_\sigma\boldsymbol{z}. \qedhere
\end{equation*}
\end{proof}

\section{Discriminants and reflections} \label{sec.refl}

In this section, we collect some results from classical commutative
invariant theory that we will need for our examples.  
We prove that when $G$ is a group generated by reflections 
acting on the polynomial ring $A = \kk[x_1,\dots,x_n]$, 
the discriminant of the extension $A/A^G$ may be computed in a 
manner similar to the formula for the discriminant of an algebraic number field, c.f. \cite[Proposition 2.26]{milne}.
Recall that our field $\kk$ is of characteristic zero.

Let $\sigma \in \GL_n(\kk)$.  Recall that $\sigma$ is a {\sf reflection} 
if $\sigma$ is of finite order and fixes a codimension one subspace
of the vector space of linear forms in $A$.
Denote the quotient field of a domain $S$ by $Q(S)$.

\begin{theorem}
Let $A = \kk[x_1,\dots,x_n]$ and 
$G \subseteq \GL_n(\kk)$ a finite group generated by reflections
that acts on $A$ as automorphisms. Then:
\begin{enumerate}
 	\item The invariant ring $A^G = \kk[f_1,\dots,f_n]$ is a graded 
 	subalgebra of $A$, with the $f_i$ algebraically independent.
 	\item One has $\displaystyle\prod_i \deg(f_i) = |G|$.
 	\item $A$ is free as an $A^G$-module of rank $|G|$.
 	\item $Q(A^G) = Q(A)^G$, where the action of $G$ on $Q(A)$ 
 	is induced from the action of $G$ on $A$.
\end{enumerate}
\end{theorem}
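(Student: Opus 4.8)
The plan is to prove this theorem by invoking the foundational results of classical invariant theory in the nonmodular (characteristic zero) setting, and then deducing the stated consequences. The four claims are not independent; they are logically linked, and the natural strategy is to establish them in the order (1), (3), (2), (4), since (3) is really the module-theoretic refinement of (1), and (2) follows from a degree/Hilbert-series count that presupposes the polynomial structure of (1).

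\medskip

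\noindent\textbf{Establishing (1) and (3).} The heart of the matter is the Chevalley--Shephard--Todd theorem \cite{Chev,ShTo}, which asserts that for a finite subgroup $G \subseteq \GL_n(\kk)$ acting on $A = \kk[x_1,\dots,x_n]$ with $\kk$ of characteristic zero, the invariant ring $A^G$ is a polynomial ring $\kk[f_1,\dots,f_n]$ on $n$ algebraically independent homogeneous generators \emph{if and only if} $G$ is generated by reflections. First I would cite this theorem to obtain claim (1) directly. For claim (3), I would observe that once $A^G = \kk[f_1,\dots,f_n]$ is itself a polynomial ring on $n$ algebraically independent homogeneous elements, and $A$ is a finitely generated module over $A^G$ (which holds because $A$ is integral over $A^G$ and finitely generated as an algebra, so finitely generated as a module), then $A$ is a finitely generated graded module over the polynomial ring $A^G$. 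Graded freeness then follows from the fact that $A$ is Cohen--Macaulay (being a polynomial ring) over the polynomial subring $A^G$, together with the standard criterion that a finitely generated graded module over a polynomial ring is free if and only if it is Cohen--Macaulay of maximal dimension; alternatively, one appeals directly to the Chevalley--Shephard--Todd theorem, which already packages the freeness of $A$ over $A^G$ as part of its conclusion.

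\medskip

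\noindent\textbf{Establishing (2) and (4).} For claim (2), the rank in (3) equals $|G|$, and I would compute this rank via Hilbert series. Writing the Hilbert series of $A$ as $\prod_i (1-t)^{-1} = (1-t)^{-n}$ and that of $A^G = \kk[f_1,\dots,f_n]$ as $\prod_i (1-t^{d_i})^{-1}$ where $d_i = \deg(f_i)$, the freeness of $A$ over $A^G$ gives $H_A(t) = H_{A^G}(t) \cdot P(t)$ where $P(t)$ is the Hilbert series of the (finite-rank, graded) free module, a polynomial with $P(1) = \rank$. Evaluating the relation $\prod_i (1 - t^{d_i})^{-1} \cdot P(t) = (1-t)^{-n}$ and taking the limit $t \to 1$, each factor $(1-t^{d_i})/(1-t) \to d_i$, so $P(1) = \prod_i d_i$; comparing with $\rank = |G|$ from (3) yields $\prod_i \deg(f_i) = |G|$. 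For claim (4), one inclusion $Q(A^G) \subseteq Q(A)^G$ is immediate since $G$ fixes $A^G$ pointwise. For the reverse, I would use that $Q(A)/Q(A^G)$ is a field extension of degree $|G|$ (since $A$ is free of rank $|G|$, localizing at the multiplicative set $A^G \setminus \{0\}$ shows $Q(A)$ is a $Q(A^G)$-vector space of dimension $|G|$), while $Q(A)/Q(A)^G$ is Galois of degree $|G|$ by Artin's theorem on the fixed field of a finite group of automorphisms; combining $[Q(A):Q(A^G)] = |G| = [Q(A):Q(A)^G]$ with $Q(A^G) \subseteq Q(A)^G$ forces equality.

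\medskip

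\noindent\textbf{Main obstacle.} The genuinely substantial input is the Chevalley--Shephard--Todd theorem itself, which underlies (1) and (3); everything else is bookkeeping built on top of it. Since the theorem is being cited rather than reproved, the only real subtlety in the proof is the Hilbert-series degree count in step (2)—specifically, being careful that the limit argument at $t=1$ is justified and that the freeness from (3) legitimately factors the Hilbert series as a product. I expect no difficulty there, so the proof is essentially an assembly of standard invariant-theory facts with appropriate citations.
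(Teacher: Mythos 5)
There is a genuine circularity in your plan. Your argument for (3) establishes only \emph{freeness} of $A$ over $A^G$: the Cohen--Macaulay/Auslander--Buchsbaum criterion gives freeness, and your fallback citation of Chevalley--Shephard--Todd is invoked only for ``the freeness of $A$ over $A^G$''. Nowhere do you establish that the rank equals $|G|$. Yet your proof of (2) concludes by ``comparing with $\rank = |G|$ from (3)'', and your proof of (4) again uses ``$A$ is free of rank $|G|$'' to get $[Q(A):Q(A^G)] = |G|$. Since, given freeness, your own Hilbert-series computation shows that the rank equals $\prod_i \deg(f_i)$, the assertion ``rank $= |G|$'' is logically \emph{equivalent} to claim (2): you are assuming precisely what is to be proved, and both (2) and (4) rest on it. (This would be harmless if the version of Chevalley--Shephard--Todd you cite included the rank-$|G|$ statement --- some formulations do, via Chevalley's theorem that the coinvariant algebra $A \otimes_{A^G} \kk$ affords the regular representation of $G$ --- but your write-up claims only freeness from it, and your primary Cohen--Macaulay argument certainly produces no rank.)

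The circle can be broken in two standard ways, and the paper takes the first: claim (2), $\prod_i \deg(f_i) = |G|$, is quoted as an independently known fact (the paper cites Stanley, Corollary 4.4; it follows, e.g., from Molien's formula), after which the rank statement in (3) is a consequence of freeness plus your Hilbert-series limit, and (4) goes through as you wrote it. The alternative is to prove (4) \emph{first}, with no rank input: the inclusion $Q(A^G) \subseteq Q(A)^G$ is clear; conversely, given $p/q \in Q(A)^G$, multiply numerator and denominator by $\prod_{g \neq e} g(q)$ so that the denominator lies in $A^G$; the new numerator is then a $G$-invariant element of $A$ (being the product of the invariant fraction with the invariant denominator), hence lies in $A^G$, so $p/q \in Q(A^G)$. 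With (4) in hand, Artin's theorem gives $[Q(A):Q(A^G)] = [Q(A):Q(A)^G] = |G|$ (the action of $G$ on $Q(A)$ is faithful since $G \subseteq \GL_n(\kk)$), which identifies the rank as $|G|$, and then (2) follows from your limit computation as $t \to 1$. Either repair is routine, but as written your argument does not close.
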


\begin{proof}
The first claim is the Shephard-Todd-Chevalley theorem.  
The second claim is well-known, see \cite[Corollary 4.4]{S}.  
The third claim follows from a Hilbert series computation, 
and the last statement follows from considering the
Galois extension $Q(A)/Q(A)^G$.
\end{proof}

\begin{lemma}
Let $A = \kk[x_1,\dots,x_n]$ and 
$G \subseteq \GL_n(\kk)$ a finite group generated by reflections
that acts on $A$ as automorphisms.
Then for all $f \in A$, one has
\[ \tr_{A/A^G}(f) = \tr_{Q(A)/Q(A^G)}(f) 
	= \sum_{\sigma \in G} \sigma(f).\]
\end{lemma}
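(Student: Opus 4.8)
The plan is to read both sides as the trace of the ``multiplication by $f$'' operator computed in one and the same basis, and then to identify the field trace with the sum over $G$ by Galois theory. First I would use the preceding theorem, which guarantees that $A$ is free over $A^G$ of rank $|G|$, to fix an $A^G$-module basis $\{e_1,\dots,e_{|G|}\}$ of $A$. Writing $f e_j = \sum_i m_{ij} e_i$ with $m_{ij} \in A^G$, the matrix $M = (m_{ij})$ represents left multiplication by $f$, so by definition of the regular trace one has $\tr_{A/A^G}(f) = \tr(M) = \sum_i m_{ii}$.

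The crux of the first equality is to show that this same basis is a $Q(A^G)$-vector space basis of $Q(A)$. I would set $S = A^G \setminus \{0\}$ and consider the localization $S^{-1}A$. It is a domain spanned over the field $Q(A^G)$ by $\{e_1,\dots,e_{|G|}\}$, hence finite dimensional; a finite-dimensional domain over a field is itself a field, so $S^{-1}A$ is a field sandwiched as $A \subseteq S^{-1}A \subseteq Q(A)$, which forces $S^{-1}A = Q(A)$. Since the theorem gives $Q(A^G) = Q(A)^G$ and $[Q(A):Q(A^G)] = |G|$, the spanning set $\{e_i\}$ of cardinality $|G|$ must in fact be a basis. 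The matrix of multiplication by $f$ with respect to this basis is again $M$ (the structure constants $m_{ij}$ are unchanged under the inclusion $A^G \hookrightarrow Q(A^G)$), so $\tr_{Q(A)/Q(A^G)}(f) = \tr(M)$ as well, which yields $\tr_{A/A^G}(f) = \tr_{Q(A)/Q(A^G)}(f)$.

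For the second equality I would invoke that $Q(A)/Q(A^G) = Q(A)/Q(A)^G$ is Galois with group $G$: the group $G$ acts faithfully on $Q(A)$, and since $\kk$ has characteristic zero, Artin's theorem produces a separable Galois extension whose Galois group is exactly $G$ of degree $|G|$. The standard description of the trace in a Galois extension then expresses $\tr_{Q(A)/Q(A^G)}(f)$ as the sum of the Galois conjugates of $f$, namely $\sum_{\sigma \in G} \sigma(f)$, completing the chain of equalities.

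I expect the main obstacle to lie in the base-change step of the second paragraph, that is, in verifying carefully that the $A^G$-module basis of $A$ descends to a $Q(A^G)$-basis of $Q(A)$, equivalently that $S^{-1}A = Q(A)$ and that the module rank matches the field degree. Everything else is either bookkeeping or a direct citation of classical results, but the coincidence of the two traces rests entirely on being able to compute them in this common basis, so the localization argument is where the real content sits.
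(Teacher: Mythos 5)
Your proof is correct and takes essentially the same approach as the paper: both arguments identify the module trace $\tr_{A/A^G}(f)$ with the field trace by base-changing the multiplication-by-$f$ operator along $A^G \hookrightarrow Q(A^G)$, and then use that $Q(A)/Q(A)^G = Q(A)/Q(A^G)$ is Galois to express the field trace as $\sum_{\sigma \in G} \sigma(f)$. The only difference is one of detail: your localization argument showing $S^{-1}A = Q(A)$ (so that an $A^G$-basis of $A$ is a $Q(A^G)$-basis of $Q(A)$) is an explicit verification of the fact the paper compresses into the single equation $\theta^{Q(A)}_f = \theta^{A}_f \otimes_{A^G} Q(A^G)$.
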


\begin{proof}
The extension $Q(A)/Q(A)^G = Q(A)/Q(A^G)$ is Galois, 
and hence we have that the usual trace map
\[ \tr_{Q(A)/Q(A^G)} f = \sum_{\sigma \in G} \sigma(f) \in Q(A^G) \]
may be computed by the trace of the $Q(A^G)$-linear map 
$\theta^{Q(A)}_f : Q(A) \to Q(A)$ given by multiplication by $f$.
Since for all $f \in A$, one has 
$\theta^{Q(A)}_f = \theta^{A}_f \otimes_{A^G} Q(A^G)$, 
we have the desired result.
\end{proof}

The following proposition is useful in computations involving discriminants
of extensions of commutative polynomial rings, since in practice
the matrix $W$ from Notation \ref{not.wsigma} can be time-consuming to obtain directly.
This result is reminiscent of the formula for the discriminant
of an algebraic number field $K$ in terms of the square of the determinant
of the matrix whose entries correspond to the evaluations of an integral
basis of $\mathcal{O}_K$ at the different embeddings of $K$ into $\CC$.

\begin{proposition}
\label{prop.trick}
Let $A = \kk[x_1,\dots,x_n]$ and 
$G = \{\sigma_1,\dots,\sigma_g\} \subseteq \GL_n(\kk)$ 
a finite group generated by reflections.  
Let $\{z_1,\dots,z_g\}$ be a basis of $A$ as an $A^G$-module, 
$W$ be the matrix of the trace form of the extension 
$A/A^G$ with respect to this basis, 
and let $M$ be the matrix $(\sigma_i(z_j))$.  
Then $W = M^TM$.  As a consequence, one has $d(A/A^G) = (\det M)^2$.
\end{proposition}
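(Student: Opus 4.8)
The plan is to compute the $(i,j)$-entry of $W$ explicitly and to recognize it as the $(i,j)$-entry of $M^T M$. By definition (Notation \ref{not.wsigma} with $\sigma = \id_A$), the matrix $W$ of the trace form $\tr_{A/A^G}$ with respect to the basis $\{z_1,\dots,z_g\}$ has entries $W_{ij} = \tr_{A/A^G}(z_i z_j)$, and the discriminant $d(A/A^G)$ is $\det W$. The key input is the preceding lemma, which identifies this regular trace with the Galois trace, namely $\tr_{A/A^G}(f) = \sum_{\sigma \in G}\sigma(f)$ for every $f \in A$. Applying this with $f = z_i z_j$ gives $W_{ij} = \sum_{\sigma \in G}\sigma(z_i z_j)$.

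Next I would use that each $\sigma \in G$ acts as an \emph{algebra} automorphism of $A$, so that $\sigma(z_i z_j) = \sigma(z_i)\sigma(z_j)$; this is precisely the step that converts a sum of traces into a sum of products. Writing $G = \{\sigma_1,\dots,\sigma_g\}$, we obtain $W_{ij} = \sum_{k=1}^g \sigma_k(z_i)\sigma_k(z_j)$. On the other hand, with $M = (\sigma_i(z_j))$ (rows indexed by group elements, columns by basis elements), the $(i,j)$-entry of $M^T M$ is $\sum_{k=1}^g (M^T)_{ik} M_{kj} = \sum_{k=1}^g M_{ki} M_{kj} = \sum_{k=1}^g \sigma_k(z_i)\sigma_k(z_j)$. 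These two expressions agree entrywise, whence $W = M^T M$. The stated consequence then follows from multiplicativity of the determinant: $d(A/A^G) = \det(M^T M) = \det(M^T)\det(M) = (\det M)^2$.

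I do not expect any genuine obstacle here: once the preceding lemma is in hand, the argument is essentially bookkeeping with indices, and the only substantive ingredient is that lemma's identification of $\tr_{A/A^G}$ with $\sum_{\sigma \in G}\sigma(-)$ via the Galois extension $Q(A)/Q(A^G)$. The one point meriting care is confirming that $M$ is a genuine square $g \times g$ matrix, so that $M^T M$ is defined and $\det M$ makes sense; this holds because $|G| = g$ equals the rank of $A$ as a free $A^G$-module, by the Shephard--Todd--Chevalley theorem recorded above.
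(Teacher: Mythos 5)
Your proof is correct and follows essentially the same route as the paper: both rest on the preceding lemma identifying $\tr_{A/A^G}(f)$ with $\sum_{\sigma \in G}\sigma(f)$, together with the multiplicativity of the $\sigma_k$ to rewrite $\sum_k \sigma_k(z_iz_j)$ as $\sum_k \sigma_k(z_i)\sigma_k(z_j) = (M^TM)_{ij}$. Your added remark that $M$ is genuinely $g \times g$ (since $\rank_{A^G} A = |G|$ by Shephard--Todd--Chevalley) is a fine point the paper leaves implicit.
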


\begin{proof}
One has that
\begin{equation*}
(M^TM)_{ij} 
	= \sum_k \sigma_k(z_i)\sigma_k(z_j) 
	= \sum_k \sigma_k(z_iz_j) 
	= \tr(z_iz_j) = W_{ij}.
\end{equation*}
The claim regarding the discriminant of the extension $A/A^G$ follows
since $\det W = d(A/A^G)$.
\end{proof}

We record a corollary of this proposition when $G$ is generated by a single reflection
for later use.

\begin{corollary} \label{cor.refl}
Let $A = \kk[x_1,\dots,x_n]$ and let $\sigma$ be a reflection of order $m$.
Let $A^\sigma$ be the set of elements of $A$ left invariant by $\sigma$,
and let $f$ be a linear form such that $\sigma(f) = \xi f$ for some primitive
$m^\text{th}$ root of unity $\xi$.  Then $d(A/A^\sigma) =_{\kk^\times} f^{(m-1)m}$.
\end{corollary}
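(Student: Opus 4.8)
The plan is to apply Proposition~\ref{prop.trick} directly with $G = \langle \sigma \rangle$, the cyclic group of order $m$ generated by the reflection $\sigma$, and then to compute the determinant $\det M$ by a clever choice of $A^\sigma$-module basis. Since $\sigma$ is a reflection of order $m$ fixing a codimension-one subspace, I may choose linear coordinates so that $\sigma$ acts diagonally: $\sigma(f) = \xi f$ and $\sigma$ fixes the remaining variables $x_2,\dots,x_n$. Under this change of coordinates $A = \kk[f,x_2,\dots,x_n]$ and the invariant ring is $A^\sigma = \kk[f^m, x_2,\dots,x_n]$, so $A$ is free over $A^\sigma$ with the natural monomial basis $\{1, f, f^2, \dots, f^{m-1}\}$. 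This is where I expect the real content to live: verifying that this is indeed an $A^\sigma$-basis and that it is the convenient one for computing $\det M$.

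With this basis $z_j = f^{j-1}$ for $j = 1,\dots,m$ and $G = \{\mathrm{id}, \sigma, \dots, \sigma^{m-1}\}$, the matrix $M = (\sigma_i(z_j))$ has $(i,j)$ entry $\sigma^{i-1}(f^{j-1}) = (\xi^{i-1} f)^{j-1} = \xi^{(i-1)(j-1)} f^{j-1}$. The key observation is that this is a Vandermonde-type matrix: factoring $f^{j-1}$ out of the $j$-th column gives $M = V \cdot \mathrm{diag}(1, f, f^2, \dots, f^{m-1})$, where $V = (\xi^{(i-1)(j-1)})$ is the Vandermonde matrix on the nodes $1, \xi, \xi^2, \dots, \xi^{m-1}$. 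Since $\xi$ is a primitive $m$-th root of unity these nodes are distinct, so $\det V$ is a nonzero scalar in $\kk$, hence a unit. The second factor contributes $\det(\mathrm{diag}(1,f,\dots,f^{m-1})) = f^{0+1+\cdots+(m-1)} = f^{\binom{m}{2}} = f^{m(m-1)/2}$.

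Putting these together, $\det M =_{\kk^\times} f^{m(m-1)/2}$, and Proposition~\ref{prop.trick} gives $d(A/A^\sigma) =_{\kk^\times} (\det M)^2 = f^{m(m-1)}$, which is the claimed formula $f^{(m-1)m}$. The main obstacle, such as it is, is purely bookkeeping at the start: one must justify passing to diagonalizing coordinates for $\sigma$ and confirm that $\{1, f, \dots, f^{m-1}\}$ is genuinely an $A^\sigma$-basis — equivalently that $A^\sigma = \kk[f^m, x_2,\dots,x_n]$ — rather than any subtlety in the determinant computation, which is a textbook Vandermonde evaluation once the basis is in hand. One should also note that Proposition~\ref{prop.trick} is stated for $G$ generated by reflections, and a single reflection certainly qualifies, so its hypotheses are met.
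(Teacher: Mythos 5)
Your proof is correct and follows essentially the same route as the paper: the same diagonalizing change of coordinates, the same basis $\{1,f,\dots,f^{m-1}\}$ of $A$ over $A^\sigma = \kk[f^m,x_2,\dots,x_n]$, and an application of Proposition~\ref{prop.trick} with $\det M$ evaluated as a Vandermonde determinant. The only cosmetic difference is that you factor $M$ as a scalar Vandermonde matrix times $\mathrm{diag}(1,f,\dots,f^{m-1})$, whereas the paper reads $M$ directly as the Vandermonde matrix on the nodes $\{f,\xi f,\dots,\xi^{m-1}f\}$; both give $\det M =_{\kk^\times} f^{\binom{m}{2}}$ and hence the stated formula.
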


\begin{proof}
After a change of variable, we have $A = \kk[f,y_2,\dots,y_n]$
with $\sigma(f) = \xi f$ and $\sigma(y_i) = y_i$.  
Therefore, $A^\sigma = \kk[f^m,y_2,\dots,y_n]$
and hence a basis for $A$ over $A^\sigma$ is $\{1,\dots,f^{m-1}\}$.
The matrix $M$ from the above proposition is 
therefore a Vandermonde matrix on the elements
$\{f,\xi f, \xi^2 f, \dots, \xi^{m-1}f\}$. 
Therefore by Proposition \ref{prop.trick}
we have that
\begin{equation*}
\det W = \left(\prod_{i < j} (\xi^i f - \xi^j f)\right)^2 =_{\kk^\times} f^{2\binom{m}{2}} = f^{(m-1)m}. \qedhere
\end{equation*}
\end{proof}

\begin{corollary}
\label{cor.ztr}
Let $S$ be an algebra, $A=S[t]$, and $R=S[t^m]$, $m \in \ZZ_{>0}$.
Then $d(A/R)=_{\kk^\times} (t^{m-1})^m$.
\end{corollary}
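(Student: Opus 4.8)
The plan is to compute the discriminant directly from its definition, by writing the trace pairing of $A = S[t]$ over $R = S[t^m]$ in an explicit basis. First I would record that $\{1,t,\dots,t^{m-1}\}$ is an $R$-basis of $A$: every monomial factors uniquely as $t^j = (t^m)^{\lfloor j/m \rfloor}\, t^{\,j \bmod m}$ with $(t^m)^{\lfloor j/m\rfloor} \in R$, which is the coset decomposition of $\NN$ by the submonoid $m\NN$ from Lemma \ref{lem.basisCosets}, extended $S$-linearly. Hence $A$ is free over $R$ of rank $m$. I would also note that the case $S = \kk$ is exactly Corollary \ref{cor.refl} applied to the order-$m$ reflection $t \mapsto \xi t$ of $\kk[t]$, so the real content is to carry that computation over an arbitrary base.

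The key step is the computation of the regular trace $\tr$ of $A$ over $R$ on powers of $t$, namely
\begin{equation*}
\tr(t^k) =
\begin{cases}
m\,t^k & \text{if } m \mid k,\\
0 & \text{otherwise.}
\end{cases}
\end{equation*}
To see this I would write the matrix of left multiplication by $t^k$ in the basis above: it sends $t^i \mapsto t^{k+i} = (t^m)^{q}\,t^{r}$ where $k+i = qm + r$ with $0 \le r < m$, so a diagonal contribution occurs only when $r = i$, i.e. only when $m \mid k$, in which case every diagonal entry equals $(t^m)^{k/m} = t^k$. Summing the diagonal yields $m\,t^k$.

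With this in hand I would assemble the matrix $W = (\tr(t^i t^j))_{i,j=0}^{m-1} = (\tr(t^{i+j}))$. Since $0 \le i+j \le 2m-2$, the only indices with $m \mid (i+j)$ are $i=j=0$ and the pairs with $i+j = m$; thus $W_{00} = m$, $W_{i,\,m-i} = m\,t^m$ for $1 \le i \le m-1$, and all remaining entries vanish. This matrix is block diagonal with a $1\times 1$ block $(m)$ and an $(m-1)\times(m-1)$ antidiagonal block of constant entry $m\,t^m$, so
\begin{equation*}
\det W = \pm\, m\,(m\,t^m)^{m-1} = \pm\, m^{m}\, t^{m(m-1)} =_{\kk^\times} \left(t^{m-1}\right)^{m},
\end{equation*}
using $m^m, \pm 1 \in \kk^\times$. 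Since $d(A/R) = \det W$ by definition, this gives the claim.

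The computation is routine, and the points demanding care are, first, checking the basis and that left multiplication by $t^k$ is the relevant $R$-linear endomorphism --- which tacitly requires $R \subseteq C(A)$ for the regular trace to be defined --- and, second, the bookkeeping in the trace, where it is essential that the ``wrap-around'' terms $t^{k+i}$ with $k+i \ge m$ land off the diagonal and hence do not contribute to $\tr(t^k)$ when $m \nmid k$. I expect this second point to be the only place an error could slip in. As an alternative to evaluating $\det W$ by hand, one could adopt the base-change viewpoint $A = S \otimes_\kk \kk[t]$ and $R = S \otimes_\kk \kk[t^m]$: then $W$ has all entries in $\kk[t^m]$ and is identical to the matrix computed for $\kk[t]/\kk[t^m]$ in Corollary \ref{cor.refl}, so its determinant is unchanged and the general case reduces to $S = \kk$.
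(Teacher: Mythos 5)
Your proof is correct, but it takes a genuinely different route from the paper's. The paper disposes of this corollary in one line: apply Corollary \ref{cor.refl} to the automorphism $\sigma$ of $A=S[t]$ that fixes $S$ and sends $t\mapsto \xi t$ for $\xi$ a primitive $m^\text{th}$ root of unity, so that $A^\sigma = S[t^m]$; that corollary in turn rests on the invariant-theoretic machinery of Section \ref{sec.refl}, namely the Galois-theoretic trace formula and the factorization $W=M^TM$ of Proposition \ref{prop.trick} with $M$ a Vandermonde matrix. You instead compute the trace pairing from scratch: $\tr(t^k)=mt^k$ when $m\mid k$ and $0$ otherwise, so $W$ is the direct sum of the $1\times 1$ block $(m)$ and an $(m-1)\times(m-1)$ antidiagonal block with entries $mt^m$, whence $\det W = \pm m^m t^{m(m-1)} =_{\kk^\times} (t^{m-1})^m$. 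The two arguments secretly produce the same matrix --- expanding the paper's $M^TM$ for $f=t$ gives exactly your sparse matrix, since $\sum_{i=0}^{m-1}\xi^{i(j+k)}$ vanishes unless $m \mid (j+k)$ --- but you evaluate its determinant by inspection rather than via the Vandermonde factorization. What your approach buys is self-containedness and honesty about generality: Corollary \ref{cor.refl} is stated only for $A=\kk[x_1,\dots,x_n]$, so the paper's citation, applied to $S[t]$ with $S$ an arbitrary algebra, silently requires exactly the base-change observation you make at the end (all entries of $W$ lie in $\kk[t^m]$, so the computation is identical to that for $\kk[t]/\kk[t^m]$). Your further caveat that the statement tacitly needs $R\subseteq C(A)$ (i.e.\ $S$ commutative, or else a right-module reading of the regular trace) is a fair point about the statement itself and applies equally to the paper's proof; it is harmless in the paper's applications, where the corollary is only invoked with $S=\kk$.
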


\begin{proof}
This follows by applying Corollary \ref{cor.refl} to the automorphism $\sigma$
of $A$ defined by $\sigma(t) = \xi t$ where $\xi$ is a primitive $m^\text{th}$ root of unity.
\end{proof}

\begin{example}
\label{ex.discS3}
Let $A=\kk[x_1,x_2,x_3]$ and $G=\cS_3$,
the symmetric group acting as permutations of $x_i$.
A basis for $A$ over $A^G$ is $\{1,x_1,x_2,x_1^2,x_1x_2,x_1^2x_2\}$
(this is well-known, see e.g. \cite[Proposition V.2.20]{H}).
Let $M=(\sigma_i(z_j))$ so that
\[
M = \begin{pmatrix}
1 & x_1 & x_2 & x_1^2 & x_1x_2 & x_1^2x_2 \\
1 & x_2 & x_1 & x_2^2 & x_1x_2 & x_1x_2^2 \\
1 & x_3 & x_2 & x_3^2 & x_2x_3 & x_2x_3^2 \\
1 & x_1 & x_3 & x_1^2 & x_1x_3 & x_1^2x_3 \\
1 & x_2 & x_3 & x_2^2 & x_2x_3 & x_2^2x_3 \\
1 & x_3 & x_1 & x_3^2 & x_1x_3 & x_1x_3^2
\end{pmatrix}.
\]
It can be checked that the determinant of $M$ 
is the cube of the Vandermonde determinant on the set $\{x_1,x_2,x_3\}$
and so it follows from Proposition \ref{prop.trick} that
\[ d(A/A^G) = \left[\prod_{i < j} (x_i-x_j) \right]^6.\]
\end{example}

In light of the Example \ref{ex.discS3},
we conjecture that the following question has an affirmative answer. 

\begin{question}
Suppose $G = S_n$ acts on $A = \kk[x_1,\dots,x_n]$ as permutations.
Is the discriminant of $A$ over $A^G$ the Vandermonde determinant
on $\{x_1,\dots,x_n\}$ to the power $n!$?
\end{question}

\section{Discriminants of twisted tensor products} \label{sec.twistThm}

In this section, we prove the main theorem that allows us to calculate
discriminants for certain Ore extensions and skew group algebras.

\begin{theorem} \label{thm.twist}
Let $A$ be an algebra, $M$ a submonoid of a group $G$, and suppose $\rho : G \to \Aut(A)$ is a group homomorphism
such that $\im(\rho\restrict{M}) \cap \Inn(A) = \{\id_A\}$ and $H := \ker \rho \cap M \subseteq C(M)$.
Set $T = A \otimes_\tau \kk M$, and suppose $R$ is a central subalgebra of $T$ such that:
\begin{enumerate}[(a)]
\item \label{thm.twist.a} $A$ is free over $A \cap R$ of rank $n < \infty$,
\item \label{thm.twist.b} $R = (A \cap R) \otimes \kk H$, and
\item \label{thm.twist.c} There exists a basis $\{m_1,\dots,m_\ell\}$ of $\kk M$ over $\kk H$ with $m_1 = e_M$ 
and $m_i \in M$ (c.f. Lemma \ref{lem.basisCosets}).
\end{enumerate}
Then:
$$d(T/R) = \Big(d(A/A\cap R)\Big)^\ell~~\Big(d(\kk M/\kk H)\Big)^n.$$
\end{theorem}

\begin{remark}
By Lemma \ref{lem.center}, $C(A\otimes_\tau\kk M) = C(A)^M \otimes \kk H$.
Since $R \subseteq C(A\otimes_\tau\kk M)$, then assuming (b) in Theorem \ref{thm.twist}
one has $A \cap R = C(A)^M \cap R$.
\end{remark}

Before giving the proof of Theorem \ref{thm.twist}, we need one more lemma:

\begin{lemma} \label{lem.trace}
Under the hypotheses of Theorem \ref{thm.twist}, $T$ is free over $R$.
Furthermore, for all $a \in A$ and $m \in M$, one has
$$\tr(a \otimes m) = \begin{cases} \tr(a)\otimes\tr(m) & \text{if}~m \in H \\ 0 & \text{otherwise.}\end{cases}$$
\end{lemma}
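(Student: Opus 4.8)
The plan is to establish the two assertions of Lemma \ref{lem.trace} in turn: first that $T$ is free over $R$, and then the trace formula. For freeness, I would combine the hypotheses (a), (b), and (c). Hypothesis (a) gives a basis $\{x_1,\dots,x_n\}$ of $A$ over $A \cap R$, while (c) supplies a basis $\{m_1,\dots,m_\ell\}$ of $\kk M$ over $\kk H$. Since $T = A \otimes_\tau \kk M$ is free over $A \otimes \kk H$ with basis $\{1 \otimes m_i\}$ (the twisting $\tau$ does not disturb the left $A$-module structure, and Lemma \ref{lem.basisCosets} identifies the $m_i$ as coset representatives), and $A$ is free over $A \cap R$, I expect that the products $\{x_k \otimes m_i\}$ form an $R$-basis of $T$, using $R = (A \cap R) \otimes \kk H$ from (b). The one point requiring care is that $R$ is central, so left and right module structures agree and the counting argument via bases goes through cleanly.

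\textbf{The trace computation.} For the trace formula, I would compute $\tr(a \otimes m) = \tr_{\reg}$ by realizing left multiplication by $a \otimes m$ as a matrix over $R$ in the basis $\{x_k \otimes m_i\}$ and taking its internal (matrix) trace. Left multiplication by $a \otimes m$ sends $x_k \otimes m_i$ to $(a \otimes m)(x_k \otimes m_i) = a\,(m x_k) \otimes m m_i$, using the twisted multiplication $\mu_\tau$ and the action $m x_k = \rho(m)(x_k)$. The key structural observation is that the $\kk M$-component $m m_i$ lands in a \emph{different} coset $H m_j$ unless the contribution is ``diagonal'' in the coset index; concretely, a basis vector $x_\bullet \otimes m_i$ can only appear in the expansion of $(a \otimes m)(x_k \otimes m_i)$ when $m m_i \in H m_i$, i.e.\ when $m \in H$. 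Thus the matrix of left multiplication is block-diagonal with respect to the coset grouping precisely when $m \in H$, and otherwise has zero diagonal blocks, forcing $\tr(a \otimes m) = 0$ for $m \notin H$.

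\textbf{The diagonal case and the main obstacle.} When $m \in H \subseteq C(M)$, the multiplication simplifies considerably: since $H = \ker \rho \cap M$, the action of $m$ on $A$ is trivial, so $m x_k = x_k$, and centrality of $m$ in $M$ gives $m m_i = m_i m$, keeping each coset $H m_i$ fixed. The left-multiplication matrix then decomposes as a tensor/Kronecker product of the matrix of left multiplication by $a$ on $A$ (over $A \cap R$) with the matrix of left multiplication by $m$ on $\kk M$ (over $\kk H$). Taking the internal trace of a Kronecker product multiplies the traces, yielding $\tr(a \otimes m) = \tr(a) \otimes \tr(m)$, where the right-hand factors are the regular traces of $A/(A\cap R)$ and $\kk M/\kk H$ respectively. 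The main obstacle I anticipate is bookkeeping the twisted multiplication carefully enough to justify the Kronecker-product factorization: one must verify that the $A$-block structure and the $\kk M$-block structure genuinely separate, which relies essentially on $m$ acting trivially (from $m \in \ker\rho$) and centrally (from $m \in C(M)$). Once both hypotheses are invoked, the factorization is clean, but conflating the roles of $\ker\rho$ and $C(M)$ is the easiest place to slip.
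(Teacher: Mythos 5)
Your proposal is correct and takes essentially the same approach as the paper: freeness via the product basis $\{x_k \otimes m_i\}$ built from hypotheses (a)--(c), vanishing of the trace for $m \notin H$ because cancellativity of $M$ together with $R = (A\cap R)\otimes \kk H$ forces every diagonal entry of the left-multiplication matrix to be zero, and for $m \in H$ a Kronecker-product factorization whose trace is $\tr(a)\otimes\tr(m)$. The paper's proof realizes that factorization concretely by expanding $ax_\alpha = \sum_i r_{i\alpha}x_i$ and $mm_\beta = \sum_j r'_{j\beta}m_j$ and summing the diagonal coefficients, which is precisely your block-diagonal computation.
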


\begin{proof}
Let $B = A \cap R = C(A)^M \cap R$, and let $\{x_1,\dots,x_n\}$ be a basis of $A$ over $B$. Then
$$\{x_i\otimes m_j~|~1 \leq i \leq n~\text{and}~1\leq j \leq \ell\}$$
is a basis of $T$ over $R$.  Recall that for a fixed $1 \leq \alpha \leq n$ and $1 \leq \beta \leq \ell$, one has
\begin{equation} \label{eq.twistprod}
(a \otimes m)(x_\alpha\otimes m_\beta) = a(mx_\alpha)\otimes mm_\beta.
\end{equation}
Consider the coefficient $c_{\alpha\beta} \in R$ of $x_\alpha \otimes m_\beta$ when writing the product in \eqref{eq.twistprod}
in the $\{x_i\otimes m_j\}$ basis.  If $c_{\alpha\beta} \neq 0$, then hypothesis (\ref{thm.twist.b}) implies
that $c_{\alpha\beta}$ would have a summand of the form $a' \otimes m'$ for some $m' \in H$ such that $mm_\beta = m'm_\beta$.
Since $M$ is cancellative, one has $m \in H$.

For the case $m \in H$, write $ax_\alpha = \sum_{i = 1}^n r_{i\alpha} x_i$ for some $r_{i\alpha} \in A \cap R$
and $mm_\beta = \sum_{j = 1}^\ell r'_{j\beta} m_j$ for some $r'_{j\beta} \in \kk H$.
Then one has
\begin{eqnarray*}
(a \otimes m)(x_\alpha \otimes m_\beta) & = & ax_\alpha \otimes mm_\beta \\
 & = & \sum_{i = 1}^n \sum_{j = 1}^\ell (r_{i\alpha} \otimes r'_{j\beta}) (x_i \otimes m_j).
\end{eqnarray*}
Therefore the trace of the map given by left multiplication of $a \otimes m$ satisfies
\begin{equation*}
\tr(a \otimes m) = \sum_{\alpha = 1}^n \sum_{\beta = 1}^\ell (r_{\alpha\alpha} \otimes r'_{\beta\beta}) =
\Big(\sum_{\alpha = 1}^n r_{\alpha\alpha}\Big) \otimes \Big(\sum_{\beta = 1}^\ell r'_{\beta\beta}\Big) = \tr(a) \otimes \tr(m). \qedhere
\end{equation*}
\end{proof}

\begin{proof}[Proof of Theorem \ref{thm.twist}]
Using the same notation in the proof of Lemma \ref{lem.trace}, we have that 
$$\{x_i\otimes m_j~|~1 \leq i \leq n~\text{and}~1\leq j \leq \ell\}$$
is a basis of $T$ over $R$.  List this basis in the order
$$\{x_1\otimes m_1,\dots,x_n\otimes m_1,x_1\otimes m_2,\dots,x_n\otimes m_2,\dots,
x_1\otimes m_\ell,\dots,x_n\otimes m_\ell\}.$$
One may then think of the trace matrix corresponding to this ordered basis of $T$ over
$R$ as an $m \times m$ block matrix with blocks of size $n \times n$, where the block
is determined by index on the $\{m_j\}$ basis used.

Since $(x_i\otimes m_j)(x_{i'}\otimes m_{j'}) = x_i(m_jx_{i'})\otimes m_jm_{j'}$, Lemma
\ref{lem.trace} shows that the $(j,j')^\text{th}$ block entry is the
zero matrix whenever $m_jm_{j'} \not\in H$.  By Lemma \ref{lem.monBasis} there
is exactly one $m_{j'}$ such that $m_jm_{j'} \in H$.  In addition, one has
that when $m_jm_{j'} \in R \cap M$, the $(j,j')^\text{th}$ block entry is
$W_{m_j}\tr(m_jm_{j'})$, where $W_{m_j}$ is as in Notation \ref{not.wsigma}.
By Lemma \ref{lem.wsigma} we have that the determinant of $W_{m_j}$ is (up to a unit in $B$) the
determinant of the matrix $W$ of the trace form of $A$ over $B$, which by
definition is the discriminant $d(A/B)$.

Therefore up to a unit in $B$, the determinant of the matrix of the
trace form of $T$ over $R$ is the determinant of the matrix that is
the Kronecker product of the matrices of the trace forms of $A$ over $B$
and of $\kk M$ over $\kk H$.  The claim now follows from the usual
formula for the determinant of a Kronecker product of two matrices.
\end{proof}

\begin{example}
\label{ex.monoid}
Let $A=\kk[x_1,\hdots,x_n]$ and $\sigma \in \Aut(A)$ defined
by $\sigma(x_1) = \xi x_1$, $\xi$ a primitive sixth root of unity,
and $\sigma(x_i)=x_i$ for $i = 2,\hdots,n$.
Then $A^\sigma = \kk[x_1^6,x_2,\hdots,x_n]$ and $d(A/A^\sigma) = x_1^{30}$
by Corollary \ref{cor.refl}.

Let $\rho:\NN \rightarrow \Aut(A)$ be the monoid homomorphism 
sending $1$ to $\sigma$ as in Remark \ref{rem.OreAndSkgpAreTwists}
and let $M$ be the submonoid of $\NN$ generated by $\{2,3\}$.
By restriction, $\rho:M \rightarrow \Aut(A)$ satisfies
$\ker\rho = \{6k \mid k \in \NN\} \subseteq C(M)$
and $\im\rho \cap \Inn(A) = \{\id_A\}$.
Clearly, $\kk M \iso \kk[t^2,t^3]$ and $\kk H \iso \kk[t^6]$.
A basis for $\kk[t^2,t^3]$ as a module over $\kk[t^6]$ is
$\{1,t^2,t^3,t^4,t^5,t^7\}$ and a direct computation shows that
$d(\kk M/\kk H) =_{\kk^\times} t^{42}$.

Consider the twisted tensor product $T = A \tensor_{\tau} \kk[t^2,t^3]$
and let $R = A^\sigma \tensor \kk[t^6]$.
By Theorem \ref{thm.twist}, we have
\[ d(T/R) = d(A/A^\sigma)^6 (d(\kk M/\kk H))^6
	=_{\kk^\times} (x_1^{30} t^{42})^6.\]
\end{example}

\section{Discriminants of Ore extensions} \label{sec.ore}

In this section, we apply Theorem \ref{thm.twist} to the case of an Ore extension.  
Recall that by Remark \ref{rem.OreAndSkgpAreTwists}, Ore extensions are a special case
of the twisted tensor products studied in Sections \ref{sec.twistbg} and \ref{sec.twistThm}.

\begin{theorem}
\label{thm.ore}
Let $A$ be an algebra and set $S=A[t;\sigma]$,
where $\sigma \in \Aut(A)$ has order $m<\infty$ and no
$\sigma^i$, $1 \leq i < m$, is inner.
Suppose $R$ is a central subalgebra of $S$ and set $B = R \cap A^\sigma$.
If $A$ is finitely generated free over $B$ of rank $n$ and $R=B[t^m]$, then
$S$ is finitely generated free over $R$ and
\[ d(S/R) =_{R^\times} \left(d(A/B)\right)^m \left(t^{m-1}\right)^{mn}.\]
\end{theorem}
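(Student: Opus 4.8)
The plan is to deduce this as a special case of the main twisted-tensor-product result, Theorem \ref{thm.twist}, using the identification of an Ore extension as a twisted tensor product from Remark \ref{rem.OreAndSkgpAreTwists}. First I would write $S = A[t;\sigma] \iso A \tensor_\tau \kk\NN$, where the monoid homomorphism $\rho : \NN \to \Aut(A)$ sends the generator $1$ to $\sigma$, and then realize $\NN$ as a submonoid of the group $\ZZ$ and extend $\rho$ to a group homomorphism $\rho : \ZZ \to \Aut(A)$ (still $1 \mapsto \sigma$, so $-1 \mapsto \sigma^{-1}$). Since $|\sigma| = m$, the kernel is $m\ZZ$, so $H := \ker\rho \cap \NN = m\NN$; under the identification $\kk\NN \iso \kk[t]$ this gives $\kk H \iso \kk[t^m]$.

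Next I would verify the hypotheses of Theorem \ref{thm.twist}. The condition $H \subseteq C(\NN)$ is automatic because $\NN$ is commutative. The condition $\im(\rho\restrict{\NN}) \cap \Inn(A) = \{\id_A\}$ holds because $\im(\rho\restrict{\NN}) = \{\id_A,\sigma,\dots,\sigma^{m-1}\}$, and by hypothesis none of $\sigma^1,\dots,\sigma^{m-1}$ is inner while $\id_A$ is the only inner element present. For the three numbered conditions on $R$: I would observe that since $R = B[t^m]$ with $B \subseteq A^\sigma \subseteq A$ and the powers $t^{mk}$ ($k \geq 1$) lying in positive $t$-degree, one has $A \cap R = B$; then (\ref{thm.twist.a}) is exactly the assumption that $A$ is free over $B$ of rank $n$, (\ref{thm.twist.b}) is the identification $R = B[t^m] = B \tensor \kk[t^m] = (A \cap R) \tensor \kk H$, and (\ref{thm.twist.c}) is witnessed by the basis $\{1,t,\dots,t^{m-1}\}$ of $\kk[t]$ over $\kk[t^m]$, so that $\ell = m$.

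With the hypotheses in place, Theorem \ref{thm.twist} yields that $S$ is free over $R$ and
\[ d(S/R) =_{R^\times} \big(d(A/B)\big)^m \big(d(\kk[t]/\kk[t^m])\big)^n. \]
It then remains only to compute the one-variable factor, and for this I would invoke Corollary \ref{cor.ztr} with ground ring $\kk$, which gives $d(\kk[t]/\kk[t^m]) =_{\kk^\times} (t^{m-1})^m$. Substituting and using $\big((t^{m-1})^m\big)^n = (t^{m-1})^{mn}$ produces the stated formula.

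The substantive content lies entirely in the verification step rather than in any calculation: the most delicate points are confirming $A \cap R = B$ and the inner-automorphism hypothesis $\im(\rho\restrict{\NN}) \cap \Inn(A) = \{\id_A\}$. The latter is precisely where the assumption that no proper power $\sigma^i$ is inner gets used, and it is what guarantees (via Lemma \ref{lem.center}) that the center is no larger than expected, so that $R = B[t^m]$ is the correct central base. Once these translations are checked, the discriminant formula is an assembly of Theorem \ref{thm.twist} and Corollary \ref{cor.ztr} with no further work.
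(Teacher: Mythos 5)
Your proposal is correct and follows essentially the same route as the paper: both realize $S$ as $A \tensor_\tau \kk\NN$ with $\rho:\ZZ \to \Aut(A)$ extending the $\NN$-action, verify hypotheses (a)--(c) of Theorem \ref{thm.twist} (with $H = m\NN$, $\ell = m$), and finish with Corollary \ref{cor.ztr}. Your direct degree argument for $A \cap R = B$ is a minor, harmless variation on the paper's appeal to Lemma \ref{lem.center} at that step.
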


\begin{proof}
We claim that the hypotheses imply those of Theorem \ref{thm.twist}.
We view $\NN$ as a submonoid of the additive group of the integers
and $\rho:\ZZ \rightarrow \Aut(A)$ as the group homomorphism sending $1$ to $\sigma$.
Then $S \iso A \tensor_\tau \kk \NN$.
Since $\sigma^i$ is not inner for any $i \neq km$, $k \in \ZZ$, then
$\im(\rho\mid_{\NN}) \cap \Inn(A) = \{\id_A\}$ and 
$\ker\rho = \{ km \mid k \in \ZZ\}$.
Set $H = \ker\rho \cap \NN = \{ km \mid k \in \NN\}$,
then $\{1,\hdots,m-1\}$ is a basis for $\kk \NN$ over $\kk H$
implying Theorem \ref{thm.twist} (\ref{thm.twist.c}).
The hypothesis that $A$ is free over $B$ of rank $n$
is equivalent to Theorem \ref{thm.twist} (\ref{thm.twist.a}).
By Lemma \ref{lem.center}, $R = B[t^m] = (A \cap R) \tensor \kk H$.
Hence, Theorem \ref{thm.twist} (\ref{thm.twist.b}) is satisfied.
The formula now follows from Theorem \ref{thm.twist} and Corollary \ref{cor.ztr}.
\end{proof}

\begin{corollary}
Let $A = \kk[x_1,\dots,x_n]$ and $\sigma$ be a reflection of order $m$.  
Let $f$ be a linear form that satisfies $\sigma(f) = \xi f$ 
where $\xi$ is a primitive $m^\text{th}$ root of unity.  
Then the discriminant of the Ore extension $A[t;\sigma]$ is (up to scalar)
$f^{(m-1)m^2}t^{(m-1)m^2}$.
\end{corollary}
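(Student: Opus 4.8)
The plan is to obtain this as an immediate application of Theorem \ref{thm.ore} together with Corollary \ref{cor.refl}, so that the only real work is verifying the hypotheses and tracking exponents. First I would fix the base ring by setting $B = A^\sigma$ and $R = B[t^m] = A^\sigma[t^m]$. One checks at once that $R$ is a central subalgebra of $S = A[t;\sigma]$: every $a \in A^\sigma$ commutes with $t$ since $ta = \sigma(a)t = at$, and with all of $A$ since $A$ is commutative, while $t^m$ is central because $\sigma^m = \id_A$. In fact $R$ is the \emph{full} center: by the lemma computing the center of an Ore extension of a domain, $C(S) = \bigoplus_i N(\sigma^i)[t^i]$, and since $A$ is a commutative domain one has $N(\sigma^i) \neq 0$ only when $\sigma^i = \id_A$ (i.e. $m \mid i$), in which case $N(\sigma^i) = A^\sigma$; hence $C(S) = A^\sigma[t^m] = R$, which justifies interpreting ``the discriminant of the Ore extension'' as $d(S/R)$. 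Finally, $B = R \cap A^\sigma$ and $R = B[t^m]$, as required by Theorem \ref{thm.ore}.

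Next I would verify the two remaining hypotheses of Theorem \ref{thm.ore}. For the condition that no $\sigma^i$, $1 \leq i < m$, is inner, I would again use that $A$ is a commutative domain: if $a \in A$ satisfies $xa = a\sigma^i(x)$ for all $x \in A$, then commutativity gives $a\big(x - \sigma^i(x)\big) = 0$ for all $x$, so in a domain either $a = 0$ or $\sigma^i = \id_A$; since $\sigma$ has order $m$, no $\sigma^i$ with $1 \leq i < m$ is the identity, hence none is inner. For the freeness and rank hypothesis, I would invoke the Shephard-Todd-Chevalley theorem applied to the cyclic group $\langle\sigma\rangle$, which is a reflection group of order $m$ because it is generated by the reflection $\sigma$; since $A^\sigma = A^{\langle\sigma\rangle}$, the algebra $A$ is free over $A^\sigma$ of rank $|\langle\sigma\rangle| = m$. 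The one point requiring care, and the place I would flag most explicitly, is the clash of notation: the symbol $n$ appearing in Theorem \ref{thm.ore} denotes this rank, which here equals $m$, and \emph{not} the number of variables $n$ in the statement of the corollary.

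With the hypotheses established, Theorem \ref{thm.ore} yields
\[ d(S/R) =_{R^\times} \left(d(A/A^\sigma)\right)^m \left(t^{m-1}\right)^{m\cdot m}. \]
I would then substitute $d(A/A^\sigma) =_{\kk^\times} f^{(m-1)m}$ from Corollary \ref{cor.refl}, obtaining
\[ d(S/R) =_{R^\times} \left(f^{(m-1)m}\right)^m \left(t^{m-1}\right)^{m^2} = f^{(m-1)m^2}\, t^{(m-1)m^2}, \]
which is exactly the asserted formula. I do not expect any genuine obstacle: the argument is essentially the assembly of the two cited results, and the only things demanding attention are the exponent arithmetic and the fact that the rank, rather than the number of variables, plays the role of $n$ in Theorem \ref{thm.ore}.
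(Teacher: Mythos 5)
Your proposal is correct and follows exactly the paper's route: the paper's proof is a one-line citation of Theorem \ref{thm.ore} and Corollary \ref{cor.refl}, and you simply supply the hypothesis checks (centrality of $A^\sigma[t^m]$, non-innerness via the commutative-domain argument, and freeness of rank $m$ from Shephard--Todd--Chevalley) that the paper leaves implicit. Your explicit flag that the rank $m$, not the number of variables $n$, plays the role of ``$n$'' in Theorem \ref{thm.ore} is a worthwhile clarification, but it is the same argument.
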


\begin{proof}
This follows from applying Corollary \ref{cor.refl} and 
using the Ore extension discriminant formula from Theorem \ref{thm.ore}.
\end{proof}

As test cases, we consider Ore extensions of the ordinary polynomial ring,
the $(-1)$-skew polynomial ring
\[ V_n = \kk_{-1}[x_1,\hdots,x_n],\]
and the $(-1)$-skew Weyl algebra
\[ W_n = \kk\langle x_1,\hdots,x_n \mid x_ix_j+x_jx_i = 1 \text{ for } i \neq j \rangle.\]
Note that $\gr(W_n)=V_n$.

\begin{example}[{\cite[Example 1.7]{CPWZ1}}]
\label{ex.ore1}
$V_2$ is the Ore extension $\kk[x][y;\sigma]$ 
where $\sigma(x)=-x$ and $C(V_2)=\kk[x^2,y^2]$.
Clearly $\kk[x]$ is free over $\kk[x]^\sigma = \kk[x^2]$
and $d(\kk[x]/\kk[x^2]) =_{\kk^\times} x^2$
by Corollary \ref{cor.ztr}.
By Theorem \ref{thm.ore},
\[ d(V_2/C(V_2)) =_{\kk^\times} (x^2)^2 (y)^4 = x^4y^4.\]
\end{example}

\begin{example}
\label{ex.ore2}
By \cite[Lemma 4.1 (3)]{CPWZ1},
\[ C(V_n) = 
\begin{cases}
	\kk[x_1^2,\hdots,x_n^2] & \text{ if $n$ is even} \\
	\kk[x_1^2,\hdots,x_n^2,\prod_i x_i] & \text{ if $n$ is odd.}
\end{cases}\]
Set $C_n=\kk[x_1^2,\hdots,x_n^2]$ regardless of whether $n$ is even or odd.
In either case, $V_n$ is finitely generated free over $C_n$;
this is proved in \cite[Lemma 4.1 (4)]{CPWZ1} for $n$ even
but the proof applies equally well when $n$ is odd.
However, in the case $n$ is odd we do not obtain useful information about
the automorphism group of $V_n$ because a given automorphism may not fix $C_n$.
Regardless, we use Theorem \ref{thm.ore} to inductively compute 
\begin{align}
\label{eq.vncn}
	d(V_n/C_n) =_{\kk^\times} \left( \prod_{i=1}^n x_i^2 \right)^{2^{n-1}}.
\end{align}
This gives an alternate method for obtaining the discriminant in
\cite[Theorem 4.9 (1)]{CPWZ1}.

The case $n=2$ follows from Example \ref{ex.ore1}.
Suppose \eqref{eq.vncn} holds for some $n$ and set
$S=V_n[x_{n+1};\sigma]$ where $\sigma(x_i)=-x_i$ for $i = 1,\hdots,n$.
If $n$ is odd, then $\sigma$ does not fix $\prod_i x_i$.
Hence, $C(V_n) \cap V_n^\sigma = C_n$ in both cases when $n$ is even or odd,
and $V_n$ is finitely generated free over $C_n$ of rank $2^n$.
Thus, by Theorem \ref{thm.ore},
\[ d(S/C_{n+1}) = d(V_n/C_n)^2 (2x_{n+1})^{2 \cdot 2^n}
	=_{\kk^\times} \left( \prod_{i=1}^{n+1} x_i^2 \right)^{2^n}.\]
\end{example}

\begin{example}
\label{ex.ore3}
Let $A=\kk[x,y]$ and $\sigma \in \Aut(A)$ defined by $\sigma(x)=y$ and $\sigma(y)=x$.
Let $S=A[t;\sigma]$.

We have $|\sigma|=2$ and $\sigma$ is not an inner automorphism.
Since $A$ is commutative, $C(A)^\sigma = A^\sigma = \kk[x+y,xy]$.
Thus $C(S)=A^\sigma[t^2]$. 
A basis for $A$ over $A^\sigma$ is $\{1,x\}$.
An easy computation shows that
\[ \tr(1)=2, \;\;\; \tr(x)=x+y, \;\;\; \tr(x^2) = x^2+y^2.\]
Thus, the trace matrix for $A$ over $A^\sigma$ is
\[ \begin{pmatrix}2 & x+y \\ x+y & x^2+y^2\end{pmatrix} \]
and so $d(A/A^\sigma) = (x-y)^2$. 
By Theorem \ref{thm.ore},
\[ d(S/C(S)) =_{\kk^\times} \left( (x-y)^2\right)^2 \left( t^2 \right)^2 = (x-y)^4t^4.\]
The discriminant of $S/C(S)$ is not dominating
in the sense of \cite[Definition 2.1]{CPWZ1}.

The discriminant computation above can also be seen by observing that
$S \iso \kk_{(p_{i,j})}[x_1,x_2,x_3]$ where
$p_{2,3} = p_{3,2} = -1$ and all other $p_{i,j} = 1$.
The isomorphism is given by 
$x_1 \leftrightarrow x+y,\; 
x_2 \leftrightarrow x-y$, and 
$x_3 \leftrightarrow t$.
$S$ is free over its center $C(S)=\kk[x_1,x_2^2,x_3^2]$, 
and the discriminant (up to a constant) is 
$D = x_2^4 x_3^4$ \cite[Proposition 2.8]{CPWZ2}.
\end{example}

\begin{question} 
If we instead take $\sigma \in \Aut(V_2)$ given by
$\sigma(x)=-y$, $\sigma(y)=x$
and set $S=A[t;\sigma]$ so that $S$ satisfies
\[ xy = -yx,\;\; tx = y t,\;\; ty= -xt,\]
what is the discriminant $d(S/C(S))$?

Changing to generators that include the eigenvectors of $\sigma$ 
does not give a skew-polynomial ring (as it did in the previous example). 
Because $\sigma^2$ is inner, 
Theorem \ref{thm.ore} does not apply.
In particular, $C(S)=\kk[x^2 + y^2, x^2y^2, xyt^2, t^4]$ is not a UFD. 
\end{question}

We are interested in the Ore extension 
$W_2[t;\sigma]$ with $\sigma(x)=y$ and $\sigma(y)=x$.
Because $\gr(W_2)=V_2$, the discriminant $d(V_2/C(V_2)^\sigma)$ 
is a filtered version of the discriminant of $d(W_2/C(W_2)^\sigma)$.
The Macaulay2 routines are not currently equipped to handle the
computations for this discriminant.
Instead, we pass to the homogenization of $W_2$.

For each $1 \leq i \leq n$, fix $\deg(x_i) \in \ZZ^+$.  This defines a $\NN$-grading
on $\kk\langle x_1,\hdots,x_n\rangle$. If $f = \sum_{i=0}^d f_k$ where $f_k$ is the homogeneous
component of $f$ with $\deg(f_k)=k$ and $f_d \neq 0$,
then the {\sf homogenization of $f$} by the central indeterminate 
$t$ is then $H(f) = \sum_{i=0}^d f_k t^{d-k}$ where $d=\deg(f)$.
It is clear that $H(f)$ is homogeneous in the ring $\kk\langle x_1,\hdots,x_n\rangle[t]$
where $t$ has been assigned degree 1.

Suppose $A$ is an algebra generated by $\{x_1,\hdots,x_n\}$
subject to the relations $r_1,\hdots,r_m$
and such that $\deg(x_i) > 0$.
The {\sf homogenization} $H(A)$ of $A$ is the 
algebra on the generators $\{t,x_1,\hdots,x_n\}$
subject to the homogenized relations $H(r_i)$, $i=1,\hdots,m$, 
as well as the additional relations $tx_j-x_jt$, 
$1 \leq j \leq n$.

\begin{theorem}
\label{thm.homog}
Suppose $A$ is an algebra generated by $\{x_1,\hdots,x_n\}$
subject to the relations $r_1,\hdots,r_m$
and such that $\deg(x_i) > 0$.
If $A$ is finitely generated free over a central subalgebra $R$,
then $H(A)$ is finitely generated free over $H(R)$ and 
\[ d(H(A)/H(R)) =_{(H(R))^\times} H(d(A/R)).\]
\end{theorem}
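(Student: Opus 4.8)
The plan is to realize $H(A)$ as the Rees algebra of $A$ for the filtration $F_\bullet$ induced by the grading $\deg x_i$, and then to compare the extension $H(A)/H(R)$ with $A/R$ through the two specializations $t\mapsto 1$ and $t\mapsto 0$. Under the (mild) assumption that the chosen presentation homogenizes flatly, that is, that the $H(r_i)$ generate the relations of the Rees algebra, one has $H(A)\iso\bigoplus_d F_dA\cdot t^d$, so that $t$ is a central nonzerodivisor with $H(A)/(t-1)\iso A$, $H(A)/(t)\iso\gr A$, and $H(A)[t\inv]\iso A[t,t\inv]$ (graded, with $A$ in degree $0$); the homogenization map sends an element $f$ of degree $e$ to $f\cdot t^{e}$. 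Since $R$ is central in $A$, a direct check shows $H(R)$ is a commutative central graded subalgebra of $H(A)$ containing $t$, with $H(R)/(t-1)\iso R$, $H(R)/(t)\iso\gr R$, and $H(R)[t\inv]\iso R[t,t\inv]$.

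Next I would establish freeness. I would choose an $R$-basis $\{z_1,\dots,z_N\}$ of $A$ adapted to the filtration, meaning the leading forms $\overline{z_i}\in\gr A$ form a homogeneous $\gr R$-basis of $\gr A$; then $u_i:=H(z_i)$ is homogeneous of degree $d_i:=\deg z_i$, and since $u_i\bmod t=\overline{z_i}$ is a $\gr R$-basis of $H(A)/(t)=\gr A$, a graded Nakayama argument (using that $t$ is a nonzerodivisor) shows $\{u_1,\dots,u_N\}$ is an $H(R)$-basis of $H(A)$. In particular $H(A)$ is finitely generated free over $H(R)$ of rank $N$. The hard part is exactly here: one must know that $\gr A$ is finite free over $\gr R$ and that such a filtered basis exists. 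This is where the hypotheses genuinely enter, and it is the main obstacle of the proof; for the algebras of interest it holds because the leading forms of the relations are the expected ones and $\gr A$ is again a domain finite over its center (for instance $\gr(W_n)=V_n$).

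Finally I would compute the discriminant by tracking degrees. In the homogeneous basis $\{u_i\}$ the trace matrix $W^H=(\tr_{H(A)/H(R)}(u_iu_j))$ has entries homogeneous of degree $d_i+d_j$, so $\det W^H$ is homogeneous of degree $2\sum_i d_i$. Because the regular trace commutes with the ring surjections $H(A)\to A$ ($t\mapsto1$) and $H(A)\to\gr A$ ($t\mapsto0$), reducing $W^H$ modulo $(t-1)$ recovers the trace matrix of $A/R$, while reducing modulo $(t)$ recovers the trace matrix of $\gr A/\gr R$. Hence $\det W^H\big|_{t=1}=d(A/R)$ and $\det W^H\bmod t=d(\gr A/\gr R)$. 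The first equality forces the homogeneous element $\det W^H$ to equal $t^{a}\,H(d(A/R))$ up to $\kk^\times$ for some $a\ge 0$, since a homogeneous element of the Rees algebra $H(R)$ restricting to $d(A/R)$ at $t=1$ can only be this (the $t^{a}$ records the difference $2\sum_i d_i-\deg d(A/R)$).

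It remains to show $a=0$, equivalently that $t\nmid\det W^H$, equivalently that $d(\gr A/\gr R)\ne 0$. In characteristic zero the trace form of the finite free extension $\gr A/\gr R$ is nondegenerate for the extensions at hand (e.g.\ whenever $\gr A$ is a domain finite over its center, as above), so $d(\gr A/\gr R)\ne 0$ and thus $a=0$. Therefore $d(H(A)/H(R))=\det W^H=_{(H(R))^\times}H(d(A/R))$, as claimed. As a consistency check, inverting $t$ and using $H(A)[t\inv]\iso A[t,t\inv]$ shows that the discriminant base-changes to $d(A/R)$ over $R[t,t\inv]$, confirming both the rank count and the top-degree behavior that pins down the power of $t$.
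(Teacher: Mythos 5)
Your route is genuinely different from the paper's. The paper inverts $t$: for any presentation one has $H(A)[t\inv] \iso A[t^{\pm 1}]$, so the localization and polynomial-extension lemmas of \cite{CYZ1} and \cite{CPWZ1} give $d(H(A)[t\inv]/H(R)[t\inv]) =_{(R[t^{\pm1}])^\times} d(A/R)$, and the proof ends by ``tracing back through the isomorphism and clearing fractions.'' You instead identify $H(A)$ with the Rees algebra and compare the two central specializations $t \mapsto 1$ and $t \mapsto 0$, using homogeneity of $\det W^H$ to write it as $t^a H(d(A/R))$ up to a scalar and then arguing $a=0$. The paper's approach needs no information about $\gr A$, since localization at $t$ is blind both to $t$-torsion and to degeneracy of the graded trace form; but for the same reason it determines the discriminant only up to a unit of $R[t^{\pm 1}]$, i.e.\ up to an arbitrary power of $t$. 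Your approach needs extra input (an adapted basis making $\gr A$ free over $\gr R$, and $d(\gr A/\gr R) \neq 0$), but it is exactly this input that kills the stray power of $t$.

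Relative to the theorem as stated, the hypotheses you flag are a genuine gap: they do not follow from ``$A$ is finitely generated free over central $R$.'' However, the gap is not yours alone --- it sits precisely where the paper writes ``clearing fractions,'' and it cannot be closed in general because the statement fails without a condition of your type. Take $A = \kk\langle x,y\rangle/(x^2,\, y^2,\, xy+yx-1) \iso M_2(\kk)$ with $R = \kk$, so that $d(A/R) =_{\kk^\times} 1$. Then $H(A) = \kk\langle x,y,t\rangle/(x^2,\, y^2,\, xy+yx-t^2)$ with $t$ central is free over $H(R)=\kk[t]$ on $\{1,x,y,xy\}$, and the trace matrix in this basis is
\[
W^H = \begin{pmatrix} 4 & 0 & 0 & 2t^2 \\ 0 & 0 & 2t^2 & 0 \\ 0 & 2t^2 & 0 & 0 \\ 2t^2 & 0 & 0 & 2t^4 \end{pmatrix},
\qquad \det W^H = -16\,t^8,
\]
which is not a unit multiple of $H(d(A/R)) =_{\kk^\times} 1$ in $\kk[t]$. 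Here $\gr A = H(A)/(t)$ is the exterior algebra on $x,y$ and $d(\gr A/\kk) = 0$, which is exactly your obstruction $a>0$ realized. So your argument is the right one: with your hypotheses made explicit it proves a corrected statement, it covers all the applications in the paper (where $\gr A$ is a domain such as $V_n$ with nonvanishing discriminant), and it makes visible the step that the paper's proof elides.
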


\begin{proof}
Suppose $A$ (and hence $H(A)$) is generated in degree 1.
This is easily generalized to other cases.
There is an isomorphism $H(A)[t\inv] \rightarrow A[t^{\pm 1}]$
fixing $t$ and for $i=1,\hdots,n$, $x_i \mapsto t\inv x_i$.
By \cite[Lemma 1.3]{CYZ1} and \cite[Lemma 3.1]{CPWZ1}
\[ d(H(A)[t^{\pm 1}]/R[t^{\pm 1}]) 
=_{(R[t^{\pm 1}])^\times} d(A[t]/R[t]) 
=_{(R[t])^\times} d(A/R).\]
Tracing back through the isomorphism and clearing fractions gives the result.
\end{proof}

\begin{example}
\label{ex.homog1}
Let $A=W_2$, the 2-dimensional $(-1)$-quantum Weyl algebra
$A = \kk\langle x,y \mid xy+yx=1\rangle$.
Note that $C(A)=\kk[x^2,y^2]$.
By \cite[Theorem 0.1]{CYZ1},
$d(A/C(A)) =_{\kk^\times} \left( 4 x^2y^2-1 \right)^2$.

It follows from \cite[Proposition 2.8]{gadpbw}
that $C(H(A))=\kk[x^2,y^2,t]$.
Hence, by Theorem \ref{thm.homog}
\[ d(H(A)/C(H(A))) =_{\kk^\times} \left( 4 x^2y^2-t^4 \right)^2.\]
\end{example}

\begin{example}
\label{ex.homog2}
Let $A$ be as in the previous example and
let $\sigma$ be the automorphism $x \leftrightarrow y$.
Then $\gr(A) = V_2$ and
$C(A) = C(V_2) = \kk[x^2,y^2]$.
Moreover,
$C(A)^\sigma = C(V_2)^\sigma = \kk[x^2+y^2,x^2y^2]$.
Extend $\sigma$ to $H=H(A)$ by $\sigma(t)=t$.
Then $C(H)^\sigma = \kk[x^2+y^2,x^2y^2,t]$
so $\rank(A/C(A)^\sigma) = \rank(H/C(H)^\sigma)=8$.
Let $X=x^2+y^2$, $Y=x^2y^2$, and $T=t$. Then
\[ d(H/C(H)^\sigma) =_{\kk^\times} (4Y-T^4)^4 (X^2-4Y)^4.\]
By \cite[Proposition 4.7]{CPWZ2},
\begin{align*}
	d(A/C(A)^\sigma) &=_{\kk^\times} (4Y-1)^4 (X^2-4Y)^4, \text{ and } \\
	d(V_2/C(V_2)^\sigma) &=_{\kk^\times} Y^4 (X^2-4Y)^4.
\end{align*}
\end{example}

\section{Skew group algebras}
\label{sec.skgrp}

\begin{theorem}
\label{thm.skgrp}
Let $A$ be an algebra and $G$ a finite group that acts on $A$ as automorphisms
such that no non-identity element of $G$ is inner.
Set $S=A\# G$ and identify $A$ with its image under
the embedding $a \mapsto a \tensor e$.
Suppose $A$ is a finitely generated free over 
the subalgebra $R \subseteq C(A)^G$. 
Then $S$ is finitely generated free over $R$ and
\[ d(S/R) =_{R^\times} d( A/R )^{|G|}.\]
\end{theorem}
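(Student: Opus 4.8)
The plan is to derive Theorem \ref{thm.skgrp} as an immediate special case of the general twisted tensor product formula in Theorem \ref{thm.twist}, exactly paralleling how the Ore extension result (Theorem \ref{thm.ore}) was obtained. The essential observation, recorded in Remark \ref{rem.OreAndSkgpAreTwists}, is that $A \# G = A \otimes_\tau \kk G$ where $\rho : G \to \Aut(A)$ is the given action. So the bulk of the work is simply verifying that the hypotheses of Theorem \ref{thm.twist} hold in this setting, after which the formula falls out by specializing the conclusion.

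First I would set up the twisted tensor product structure: take $M = G$ itself, viewed as a submonoid of the group $G$, so that $\rho : G \to \Aut(A)$ is already a group homomorphism. Since no non-identity element of $G$ is inner, we have $\im(\rho\restrict{G}) \cap \Inn(A) = \{\id_A\}$. The key simplification here is that $G$ acts faithfully (this follows from the no-inner-elements hypothesis, since the kernel of $\rho$ consists of elements acting as $\id_A$, which is inner), so $H := \ker\rho \cap G = \{e\}$, and consequently $\kk H = \kk$. This makes hypothesis (\ref{thm.twist.b}) read $R = (A \cap R) \otimes \kk = A \cap R$, which holds since $R \subseteq C(A)^G \subseteq A$ forces $A \cap R = R$. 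Hypothesis (\ref{thm.twist.a}) is precisely the assumption that $A$ is finitely generated free over $R$ of some rank $n < \infty$. For hypothesis (\ref{thm.twist.c}), since $\kk H = \kk$, a basis of $\kk G$ over $\kk H$ is just a $\kk$-basis of $\kk G$, namely the group elements $\{g_1 = e, g_2, \dots, g_{|G|}\}$, so $\ell = |G|$ and the condition $m_1 = e_M$ is met. The condition $H \subseteq C(M)$ is trivial since $H$ is the trivial group.

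With all hypotheses verified, Theorem \ref{thm.twist} gives that $S$ is free over $R$ and
\[
d(S/R) = \big(d(A/A\cap R)\big)^\ell \, \big(d(\kk G/\kk H)\big)^n.
\]
Since $A \cap R = R$ and $\ell = |G|$, the first factor is $d(A/R)^{|G|}$. It remains to see that the second factor $d(\kk G/\kk H)^n = d(\kk G/\kk)^n$ contributes only a unit. The discriminant $d(\kk G/\kk)$ is computed from the regular trace pairing on the group algebra: for $g, h \in G$, the product $gh$ is a single group element, and $\tr(gh)$ equals $|G|$ if $gh = e$ and $0$ otherwise (since left multiplication by a non-identity group element is a fixed-point-free permutation of the basis, hence has trace zero). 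The resulting Gram matrix is $|G|$ times a permutation matrix, whose determinant is $\pm |G|^{|G|} \in \kk^\times$. Thus $d(\kk G/\kk) =_{\kk^\times} 1$, so the second factor is a unit in $R$, and the formula becomes $d(S/R) =_{R^\times} d(A/R)^{|G|}$ as claimed.

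The main obstacle, such as it is, lies not in any deep argument but in cleanly confirming that faithfulness follows from the no-inner-elements hypothesis and that this yields $H = \{e\}$; everything downstream is a routine specialization. The one computational point worth stating explicitly is the evaluation $d(\kk G/\kk) =_{\kk^\times} 1$ via the regular trace on $\kk G$, which I would verify directly rather than cite, since it is short and makes the disappearance of the second Kronecker factor transparent.
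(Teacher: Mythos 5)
Your proposal is correct and follows essentially the same route as the paper: both specialize Theorem \ref{thm.twist} with $M = G$, observe that the no-inner-automorphisms hypothesis forces $H = \ker\rho = \{e\}$ (so $\kk H = \kk$, $A \cap R = R$, and $\ell = |G|$), and then read off the formula. Your explicit verification that $d(\kk G/\kk) =_{\kk^\times} 1$ via the regular trace is a detail the paper leaves implicit, and it is a worthwhile addition, but it does not change the argument.
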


\begin{proof}
This follows almost immediately from Theorem \ref{thm.twist}.
By hypothesis, there is a map $\rho:G \rightarrow \Aut(A)$,
$\im\rho \cap \Inn(A) = \{\id_A\}$, and $H=\ker\rho = \{e_G\}$.
Our hypotheses directly imply 
(\ref{thm.twist.a}) and (\ref{thm.twist.b}) in Theorem \ref{thm.twist}.
Because the elements of $G$ form a basis of $\kk[G]$,
we have $\ell = |G|$.
\end{proof}

\begin{example}
\label{ex.skgrp2}
Let $A=\kk[x_1,x_2,x_3]$ and $G=\cS_3$,
the symmetric group acting as permutations of $x_i$.
By Example \ref{ex.discS3},
\[ d(A/A^G) = \left[\prod_{i < j} (x_i-x_j) \right]^6.\]
Set $S=A\# G$ and $R=A^G$ identified both in $A$ and in $C(S)$. 
It follows from Theorem \ref{thm.skgrp} that
\[ d(S/R) =_{R^\times} \left[\prod_{i < j} (x_i-x_j) \right]^{36} \tensor e.\]
\end{example}

We are interested in the 
skew group algebra $V_n \# \cS_n$ 
where $\cS_n$ is the symmetric
group on $n$ letters
acting as permutations on the $x_i$.
We have that $C(V_n \# \cS_n)$ may be 
identified with $C(V_n)^{\cS_n}$.
In the case when $n$ is even we can
describe this center explicitly.

\begin{lemma} \label{lem.noInnerSn}
Let $\cS_n$ act on $V_n$ as permutations of the variables and let
$\Inn(V_n)$ denote the set of inner automorphisms induced
by normal elements of $V_n$.   Then $\cS_n \cap \Inn(V_n) = \{e\}$.
\end{lemma}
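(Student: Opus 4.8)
The plan is to exploit the fine grading on $V_n = \kk_{-1}[x_1,\dots,x_n]$. Recall that $V_n$ is an iterated Ore extension, hence a domain, and that it is $\NN^n$-graded with one-dimensional graded components: the monomials $x^{\mathbf b} = x_1^{b_1}\cdots x_n^{b_n}$, $\mathbf b \in \NN^n$, form a $\kk$-basis. I would first observe that if $\pi \in \cS_n$ is inner in the sense of the paper, then there is a nonzero $a \in V_n$ with $x\,a = a\,\pi(x)$ for all $x \in V_n$, where $\pi \cdot x_k = x_{\pi(k)}$; such an $a$ is automatically normal since $\pi$ is an automorphism (writing $y = \pi(x)$ gives $a\,y = \pi^{-1}(y)\,a$, so $aV_n \subseteq V_n a$ and symmetrically). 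Suppose toward a contradiction that $\pi \neq e$, and fix an index $i$ with $p := \pi(i) \neq i$.

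The heart of the argument is the single relation $x_i\,a = a\,\pi(x_i) = a\,x_p$. Write $a = \sum_{\mathbf b \in \operatorname{Supp}(a)} c_{\mathbf b}\, x^{\mathbf b}$ with every $c_{\mathbf b}\neq 0$. Multiplying a monomial by $x_i$ on the left, or by $x_p$ on the right, only multiplies $x^{\mathbf b}$ by a sign and raises the $i$-th (resp. $p$-th) exponent by one: concretely $x_i x^{\mathbf b}$ is a nonzero scalar times $x^{\mathbf b + e_i}$ and $x^{\mathbf b}x_p$ is a nonzero scalar times $x^{\mathbf b + e_p}$. Comparing supports on the two sides of $x_i\,a = a\,x_p$, and using that distinct monomials are linearly independent while all coefficients are nonzero, I obtain the set equality $\operatorname{Supp}(a) + e_i = \operatorname{Supp}(a) + e_p$, equivalently $\operatorname{Supp}(a) = \operatorname{Supp}(a) + (e_p - e_i)$.

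Since $p \neq i$ the vector $e_p - e_i$ is nonzero, so $\operatorname{Supp}(a)$ would be a finite nonempty subset of $\NN^n$ invariant under translation by a nonzero vector — impossible, since iterating the translation yields infinitely many distinct points. Hence $a = 0$, contradicting $a \neq 0$, and therefore no non-identity $\pi$ can be inner; that is, $\cS_n \cap \Inn(V_n) = \{e\}$.

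I expect the only delicate point to be the bookkeeping of signs when commuting monomials past one another, but the argument is deliberately arranged so that the precise signs are irrelevant: what is used is merely that left multiplication by $x_i$ and right multiplication by $x_p$ act as support-preserving bijections implementing the shifts $e_i$ and $e_p$. The genuine content reduces to the elementary fact that a nonempty finite subset of $\NN^n$ admits no nonzero translational symmetry.
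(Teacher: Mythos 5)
Your proof is correct and takes essentially the same approach as the paper's: both reduce to the single relation $x_i a = a x_{\pi(i)}$ coming from normality and compare the monomials appearing on its two sides. If anything, your write-up is the more careful one --- the paper asserts that the two sets of monomials are \emph{disjoint}, which can fail (the sets $\operatorname{Supp}(a)+e_i$ and $\operatorname{Supp}(a)+e_p$ may overlap if $\operatorname{Supp}(a)$ contains two exponent vectors differing by $e_i-e_p$), whereas you use only the weaker, correct statement that they cannot be \emph{equal}, proved via the impossibility of a nonzero translational symmetry of a finite nonempty subset of $\NN^n$.
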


\begin{proof}
Let $\sigma$ be a nontrivial permutation of $\{1,\dots,n\}$,
and suppose that $\sigma$ is an inner automorphism induced by
the normal element $a \in V_n$.  Choose $i$ such that $\sigma(i) \neq i$.
Then if one considers the equality $ax_i = x_{\sigma(i)}a$,
one sees this is impossible since the set of monomials that appear on the left hand
side is disjoint from the set of monomials which appear on the right hand side.
\end{proof}

\begin{lemma}
Let $E_n=\kk[e_1,\hdots,e_n]$ where the $e_i$
are the elementary symmetric functions
in the $x_1^2,\hdots,x_n^2$.
If $n$ is even, then $C(V_n)^{S_n} = E_n$
and $V_n$ is free over $E_n$ of order $2^n n!$.
Consequently, $V_n \# \cS_n$ is finitely generated
free over its center of order $2^n (n!)^2$.
\end{lemma}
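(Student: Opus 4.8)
The plan is to establish the claim in three stages: first identify the invariant ring $C(V_n)^{\cS_n}$ as $E_n$, then compute the rank of $V_n$ over $E_n$, and finally assemble these using the earlier structural results. Throughout I would use that $n$ is even, which by Example \ref{ex.ore2} (the reference to \cite[Lemma 4.1 (3)]{CPWZ1}) gives the clean description $C(V_n) = \kk[x_1^2,\dots,x_n^2] = C_n$, with no extra generator $\prod_i x_i$. This is precisely where evenness is used, and it is what makes the invariant computation tractable.

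For the first stage, note that $\cS_n$ permutes the central generators $x_1^2,\dots,x_n^2$ exactly as it permutes the underlying variables. Therefore computing $C(V_n)^{\cS_n} = \kk[x_1^2,\dots,x_n^2]^{\cS_n}$ reduces to ordinary commutative invariant theory: the $\cS_n$-invariants of a polynomial ring in $n$ commuting variables $y_i := x_i^2$ are generated by the elementary symmetric functions $e_1,\dots,e_n$ in the $y_i$, by the fundamental theorem of symmetric functions. This identifies $C(V_n)^{\cS_n}$ with $E_n$. I would also invoke Lemma \ref{lem.noInnerSn}, which guarantees $\cS_n \cap \Inn(V_n) = \{e\}$, so that the identification $C(V_n \# \cS_n) \cong C(V_n)^{\cS_n}$ from Lemma \ref{lem.center} is valid and $E_n$ is genuinely the center of the skew group algebra.

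For the rank, I would factor the extension $V_n/E_n$ through the intermediate ring $C_n = \kk[x_1^2,\dots,x_n^2]$, using that rank is multiplicative in towers of free extensions. Example \ref{ex.ore2} already establishes that $V_n$ is finitely generated free over $C_n$ of rank $2^n$. It remains to see that $C_n = \kk[y_1,\dots,y_n]$ is free over its subring of symmetric functions $E_n = \kk[y_1,\dots,y_n]^{\cS_n}$ of rank $n!$; this is the classical statement that a polynomial ring is free of rank $|\cS_n| = n!$ over the symmetric polynomials (the coinvariant algebra has dimension $n!$), which also follows from the Shephard-Todd-Chevalley framework recorded in Section \ref{sec.refl}. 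Multiplying gives rank $2^n \cdot n!$ for $V_n$ over $E_n$. Finally, applying Theorem \ref{thm.skgrp} with $R = E_n$ shows $V_n \# \cS_n$ is free over $E_n = C(V_n \# \cS_n)$ of rank $|\cS_n| \cdot 2^n \cdot n! = 2^n (n!)^2$.

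The main obstacle I anticipate is not any single computation but the interplay in the first stage between the noncommutativity of $V_n$ and the passage to invariants. One must be careful that $C(V_n)^{\cS_n}$ really is computed inside the commutative ring $C_n$ and not contaminated by noncommutative phenomena; here the oddness caveat in Example \ref{ex.ore2} is a genuine warning, since for odd $n$ the extra central element $\prod_i x_i$ (which is only fixed by $\cS_n$ up to sign, and on which an odd permutation acts by $-1$) would complicate the invariant ring. Restricting to even $n$ sidesteps this, but verifying carefully that every central element is already a polynomial in $x_1^2,\dots,x_n^2$ — so that the $\cS_n$-action on the center is the honest permutation action on commuting squares — is the step requiring the most care.
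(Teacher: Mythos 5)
Your proposal is correct, but it reaches the freeness and rank statements by a different route than the paper. Where you factor the extension through the intermediate ring $C_n=\kk[x_1^2,\dots,x_n^2]$ and multiply ranks in the tower $E_n \subseteq C_n \subseteq V_n$ --- combining Example \ref{ex.ore2} ($V_n$ free of rank $2^n$ over $C_n$) with the classical Chevalley/Shephard--Todd fact that a polynomial ring is free of rank $n!$ over its symmetric polynomials --- the paper never passes through $C_n$: it computes the rank directly by an inductive Hilbert series calculation, $H_{V_n}(t)/H_{E_n}(t)$ evaluated at $t=1$, and then deduces freeness from the Auslander--Buchsbaum formula ($E_n$ is a polynomial ring, so $\pd_{E_n}(V_n)=0$ once the depths match). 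Your tower argument is more elementary and structural: it exhibits an explicit basis (products of a $C_n$-basis of $V_n$ with an $E_n$-basis of $C_n$) and avoids any depth or projective-dimension considerations, at the cost of leaning on the freeness of $V_n$ over $C_n$ and on Section \ref{sec.refl}. The paper's Hilbert series/Auslander--Buchsbaum method is the one that generalizes when no convenient intermediate free extension is available. The identification of the center is essentially the same in both arguments (via \cite[Lemma 4.1 (3)]{CPWZ1}, Lemma \ref{lem.center}, and Lemma \ref{lem.noInnerSn}); you merely make explicit the reduction to the fundamental theorem of symmetric functions in the commuting variables $x_i^2$, which the paper leaves implicit. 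Your concluding step via Theorem \ref{thm.skgrp} is also fine, though the paper gets the final rank $2^n(n!)^2$ simply by multiplying by the rank $n!$ of $V_n\#\cS_n$ over $V_n$.
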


\begin{proof}
The elementary symmetric functions satisfy $\deg e_i = 2i$. 
Set $E_n=\kk[e_1,\hdots,e_n]$.
We claim \[\rank(V_n/E_n)=2^n n!.\]
The Hilbert series of $V_n$ is $H_{V_n}(t)=1/(1-t)^n$ while that for $E_n$ is
\[ H_{E_n}(t) = \frac{1}{(1-t^2)(1-t^4)\cdots(1-t^{2n})}.\]
Let $H_n(t) = H_{V_n}(t)/H_{E_n}(t)$ and assume inductively that $H_n(1)=2^n n!$.
This clearly holds in the case $n=1$. Thus,
\[ 
H_{n+1}(t) 
	= \frac{(1-t^2)(1-t^4)\cdots(1-t^{2(n+1)})}{(1-t)^{n+1}}
	= H_{n}(t) \cdot \frac{(1-t^{2(n+1)})}{1-t} 
	= H_n \cdot (1+t^{n+1})(1 + t + t^2 + \cdots + t^n).
\]
Hence, $H_{n+1}(1) = H_n(1) \cdot 2 \cdot (n+1) = 2^{n+1} (n+1)!$.
Since $V_n \# \cS_n$ has rank $n!$ over $V_n$
it follows that it has rank $2^n (n!)^2$ over $E_n$.

Freeness follows from the Auslander-Buchsbaum formula.
Since $E_n$ is a polynomial ring then 
$\pd_{E_n}(V_n) = \depth_{E_n}(V_n) - \depth(E_n) = 0$.

That the center of $V_n \# \cS_n$
is generated by the elementary symmetric functions follows from
\cite[Lemma 4.1 (3)]{CPWZ1} and Lemma \ref{lem.center} as no element of $\cS_n$
acts as an inner automorphism by Lemma \ref{lem.noInnerSn}.
\end{proof}

When $n$ is odd the center of $V_n$ is not a polynomial ring
and it follows that $C(V_n \# \cS_n)$ is also not a polynomial ring.

\begin{example}
\label{ex.skgrp1}
Let $\cS_2$ act on $V_2$ as above
and set $S = V_2 \# \cS_2$.
Then $E_2 = C(S) = \kk[X,Y]$ 
where $X=x^2+y^2$ and $Y=x^2y^2$.
Since $d(V_2/E_2) = Y^2 (X^2-4Y)^2$,
then by Theorem \ref{thm.skgrp},
$d(S/C(S))=_{\kk^\times} [Y^2 (X^2-4Y)^2]^2 \tensor e$.
\end{example}

\section{Automorphism groups}
\label{sec.autos}

In this section we apply our results on the discriminant
to compute explicitly the automorphism groups
in several cases.

\subsection{An Ore extension of $\kk[x,y]$}

Let $A=\kk[x,y]$ and $\sigma \in \Aut(A)$ 
defined by $\sigma(x)=y$ and $\sigma(y)=x$.
Let $S=A[t;\sigma]$, so that $S$ satisfies the relations
\[ xy=yx, \;\;\; tx=yt, \;\;\; ty=xt.\]

By Example \ref{ex.ore3},
$f:=d(S/C(S))=16(x-y)^4t^4$.
Set $X=x+y$, $Y=xy$, and $T=t^2$, so that
$f=16(X^2-4Y)^2 T^2$.
Any automorphism of $S$ preserves the center 
and hence the discriminant up to scalar.
Because the center is a UFD, we have that any automorphism either
preserves the factors $(X^2-4Y)$ and $T$, 
or else it interchanges them (up to a scalar).

\begin{proposition}
With notation above, $\Aut(S)$ consists of maps of the following form ($a,b,c \in \kk^\times$, $d \in \kk$).
\begin{align*}
\begin{pmatrix} x \\ y \\ t \end{pmatrix} &\mapsto
\begin{pmatrix} a & b & 0 \\ b & a & 0 \\ 0 & 0 & c\end{pmatrix}
\begin{pmatrix} x \\ y \\ t \end{pmatrix} + \begin{pmatrix} d \\ d \\ 0 \end{pmatrix}, \\
\begin{pmatrix} x \\ y \\ t \end{pmatrix} &\mapsto
\begin{pmatrix} a & a & -b \\ a & a & b \\ -c & c & 0\end{pmatrix}
\begin{pmatrix} x \\ y \\ t \end{pmatrix} + \begin{pmatrix} d \\ d \\ 0 \end{pmatrix}.
\end{align*}
\end{proposition}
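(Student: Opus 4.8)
The plan is to use the discriminant computation from Example \ref{ex.ore3} together with the invariance property \cite[Lemma 1.8]{CPWZ1} to severely constrain any automorphism of $S$, then verify directly that the resulting candidates are indeed automorphisms. Recall that $f := d(S/C(S)) = 16(x-y)^4 t^4$, and in the central coordinates $X = x+y$, $Y = xy$, $T = t^2$ the center is $C(S) = \kk[X,Y,T]$, a polynomial ring in three variables, with $f =_{\kk^\times} (X^2 - 4Y)^2 T^2$. Any automorphism $\phi$ of $S$ restricts to an automorphism of $C(S)$ and must send the ideal $(f)$ to itself, hence must permute the irreducible factors of $f$ up to scalar. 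The two irreducible factors are $(X^2 - 4Y)$, which has degree $2$ in the original grading since $\deg X = 1$, $\deg Y = 2$, and $T$, which has degree $2$ as well (as $\deg t = 1$). Because both factors have the same degree, a priori $\phi$ may either fix each factor up to scalar or interchange them; these two alternatives will produce the two families in the statement.

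First I would record that any automorphism of $S$ must be graded, or at least respect the filtration, so that it acts linearly on the degree-one part $\kk x \oplus \kk y \oplus \kk t$ up to adding constants. The relations $xy = yx$, $tx = yt$, $ty = xt$ show that $x,y$ generate a polynomial subalgebra and that conjugation by $t$ swaps $x$ and $y$; these structural facts pin down how the linear part must interact with the relations. I would then split into the two cases dictated by the factor-permutation. In the first case, where $\phi$ preserves $(X^2 - 4Y)$ and $(T)$ separately, the condition that $T = t^2$ is sent to a scalar multiple of itself forces $\phi(t) = ct$ for some $c \in \kk^\times$ (the linear part acting on $t$ cannot mix in $x,y$ without changing $(T)$), and preserving $(X^2 - 4Y) = (x-y)^2$ up to scalar forces $\phi$ to preserve the symmetric coordinate $X = x+y$ and scale the antisymmetric coordinate $(x-y)$; writing this back in terms of $x,y$ gives the matrix $\begin{pmatrix} a & b & 0 \\ b & a & 0 \\ 0 & 0 & c \end{pmatrix}$ with an allowable constant shift $d$ in the $x,y$ coordinates. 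In the second case the factors are interchanged, so $\phi(T)$ is a scalar multiple of $(X^2 - 4Y)$ and $\phi(x-y)^2$ is a scalar multiple of $T = t^2$; this forces $t \mapsto$ a multiple of $(x - y)$ and $(x-y) \mapsto$ a multiple of $t$, which after bookkeeping produces the second matrix.

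The main obstacle will be showing that the candidate maps actually extend to \emph{algebra} automorphisms of $S$, i.e.\ that they are compatible with the defining relations $tx = yt$ and $ty = xt$, and that no further candidates survive. The discriminant argument only constrains the behavior on the center and gives necessary conditions on the linear part; it does not by itself guarantee that a given linear substitution respects the noncommutative relations. Concretely, after proposing each matrix I would substitute into the three relations and solve for the admissible parameters, discovering which sign and coefficient constraints (such as the pattern of $\pm b$ and $\pm c$ appearing in the second matrix) are forced. I expect the swap relations to be exactly what rigidifies the second family, since conjugation by the image of $t$ must induce the swap of the images of $x$ and $y$. The reverse direction, namely checking that each displayed map is genuinely an automorphism (bijective and relation-preserving), is then a routine verification that I would carry out by confirming the relations are preserved and exhibiting the inverse, completing the proof.
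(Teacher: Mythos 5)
You follow the same route as the paper: compute $f=d(S/C(S))=_{\kk^\times}(X^2-4Y)^2T^2$, use that $C(S)=\kk[X,Y,T]$ is a UFD so that any $g\in\Aut(S)$ permutes the irreducible factors $X^2-4Y$ and $T$ up to scalars, split into the preserve/swap cases, and finish with linear algebra against the relations. The genuine gap is the step you introduce by fiat: ``any automorphism of $S$ must be graded, or at least respect the filtration, so that it acts linearly on the degree-one part up to adding constants.'' Nothing established before that point yields this. Writing $Z=X^2-4Y=(x-y)^2$, the factor permutation controls only $g(Z)$ and $g(T)$; it gives no degree bound on $g(Y)$, hence none on $g(x+y)$ or on $g(x),g(y)$, because $Y$ is not among the irreducible factors of this discriminant (contrast Proposition \ref{prop.v2ore}, where the discriminant $T^8Y^8(X^2-4Y)^8$ involves all three central generators and the analogous case analysis does go through). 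Your later claim that preserving $(x-y)^2$ up to scalar ``forces $\phi$ to preserve the symmetric coordinate $X=x+y$'' has the same defect: it forces only $\phi(x-y)=_{\kk^\times}x-y$. For what it is worth, the paper's own proof hides the identical gap in the unexplained assertion $\deg(g(X^2))\leq 2$, so your proposal is no weaker than the paper's argument; but neither is complete.

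Moreover, the gap cannot be closed, because the statement as printed is false. Let $w=\tfrac{1}{2}(x-y)^2t^2$, a central element of $S$, and define $g(x)=x+w$, $g(y)=y+w$, $g(t)=t$. The relations are preserved: $g(x)g(y)=g(y)g(x)$ since $w$ is central and $xy=yx$, while $g(t)g(x)=tx+tw=yt+wt=g(y)g(t)$ and similarly $g(t)g(y)=g(x)g(t)$; and $x\mapsto x-w$, $y\mapsto y-w$, $t\mapsto t$ is a two-sided inverse because it fixes $w$. So $g\in\Aut(S)$, it fixes both discriminant factors $(x-y)^2$ and $t^2$, yet it is not affine and is of neither displayed form. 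What the factor-permutation method actually proves---once supplemented by a graded-domain argument pinning down $g(t)$ and $g(x-y)$ from $g(T)$ and $g(Z)$, and by the fact that an automorphism of $\kk[X,Z,T]$ restricting to an automorphism of $\kk[Z,T]$ must send $X\mapsto\lambda X+q(Z,T)$ with $\lambda\in\kk^\times$---is the proposition with the constant $d\in\kk$ replaced by an arbitrary element $d\in\kk[(x-y)^2,t^2]$. Your plan to ``substitute the candidate matrices into the relations and verify'' addresses only the easy direction; the completeness direction fails as stated, and can be rescued only by proving the corrected statement or by restricting attention to filtered automorphisms, which is exactly the hypothesis you assumed without justification. (Separately, the parameter constraints are off even for the affine maps: the identity automorphism has $b=0$, and invertibility of the first matrix requires $a\neq\pm b$ rather than $a,b\in\kk^\times$.)
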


\begin{proof}
Let $g \in \Aut(S)$ and suppose $g$ preserves the factors (up to scalar multiple).
Then $\deg(g(X^2)) \leq 2$ so $\deg(g(X))=1$.
Similarly, $\deg(g(Y)) = 2$ so $\deg(g(x))=\deg(g(y))=1$.
Moreover, $\deg(g(T))=2$ so $\deg(g(t))=1$ and $t$ is mapped to a scalar multiple of itself.
Thus, we reduce to a linear algebra problem and conclude that
all such $g$ have the first form above.

A similar argument follows in the case that $g$ interchanges the factors.
These automorphisms have the second form above.
\end{proof}

All automorphisms of $A$ are triangular, in the sense of \cite[Theorem 3(2)]{CPWZ1}.  
The automorphisms of $A$ are (-1)-affine \cite[Definition 1.7]{CPWZ2}, but not affine.  
$A$ is a skew-polynomial ring that satisfies H2, 
but not H1 of \cite[p.12]{CPWZ2}.  
We conjecture that $\Aut(A)$ is not tame,
note that \cite[Proposition 4.5]{CPWZ2} does not apply 
because $g(X)$ can contain a constant. 
(See the definitions of {\em elementary} and {\em tame} on p. 3 of \cite{CPWZ2}.)

\subsection{An Ore extension of $V_2$}
\label{ssec.v2ore}

Let $A=V_2$ with $\sigma \in \Aut(A)$ given by $\sigma(x)=y$, $\sigma(y)=x$.
Set $S = A[t;\sigma]$ so that $S$ satisfies
\[ xy = -yx,\;\; tx = y t,\;\; ty= xt.\]
This example cannot be reduced to the skew polynomial case by using eigenvectors.
Here $A$ is not free over 
$A^\sigma = \kk\langle x+y, x^3+y^3 \rangle$ 
and $A^\sigma$ is not AS regular.
However, $A$ is free over the polynomial ring $C(A)^\sigma = \kk[x^2+y^2,x^2y^2]$, 
and $C(S) = \kk[x^2+y^2,x^2y^2, t^2]= \kk[X,Y,T]$ is again a polynomial ring.  
By Example \ref{ex.homog2} and Theorem \ref{thm.ore},
\[ d(S/C(S)) =_{\kk^\times} T^8Y^8(X^2-4Y)^8.\]

\begin{proposition}
\label{prop.v2ore}
With notation above, $\Aut(S) \iso \kk^2 \rtimes \{\tau\}$ where $\tau$ is
the automorphism $x \leftrightarrow y$.
\end{proposition}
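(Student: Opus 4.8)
The plan is to use the discriminant $d(S/C(S)) =_{\kk^\times} T^8 Y^8 (X^2-4Y)^8$ computed above to constrain any automorphism $g \in \Aut(S)$, following the strategy of \cite{CPWZ1}. Any $g \in \Aut(S)$ must preserve $C(S) = \kk[X,Y,T]$ and, by \cite[Lemma 1.8]{CPWZ1}, must fix the ideal generated by the discriminant up to a unit. Since $C(S)$ is a polynomial ring (hence a UFD) and $\kk^\times$ are the only units, $g$ must permute, up to scalars, the irreducible factors $T$, $Y$, and $(X^2-4Y)$ appearing in $d(S/C(S))$. Because all three factors occur to the same power $8$, there is no immediate numerical obstruction to permuting them, so the first step is to use a degree (or grading) argument to cut down the possibilities. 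The generators satisfy $\deg X = 1$, $\deg Y = 2$, $\deg T = 2$ under the natural $\NN$-grading with $\deg x = \deg y = \deg t = 1$, and $g$ should respect this grading; comparing degrees of the factors and the fact that $X$ is the unique degree-one generator of $C(S)$ forces $g(T)$ to be a scalar multiple of $T$ and $g$ to send the pair $\{Y, X^2-4Y\}$ to scalar multiples of itself.

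First I would pin down the action of $g$ on $C(S)$. Since $X$ is the only degree-one element among the generators, $g(X)$ must be a scalar multiple of $X$ (no constant term is possible, as $X$ generates no lower-degree central element besides scalars, and the factor structure forbids introducing constants into the quadratic factors). From the constraint that $g$ permutes $\{T, Y, X^2-4Y\}$ up to scalars together with the degree bookkeeping, I expect to conclude $g(T) = \alpha T$ and that $g$ fixes the factors $Y$ and $X^2 - 4Y$ up to scalar (interchanging them is ruled out because their leading forms in $X$ differ). This reduces the problem to determining which $\kk$-algebra endomorphisms of $S$ are compatible with these central constraints. Lifting this information back to the generators $x, y, t$ of $S$, the relations $g(x), g(y)$ must be degree-one elements of $A = V_2$ whose symmetric combination $g(X) = g(x)^2 + g(y)^2$ and product-type combination $g(Y) = g(x)^2 g(y)^2$ match the prescribed scalar multiples, and $g(t)$ must be a degree-one element satisfying $g(t)^2 = \alpha T$.

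Next I would solve the resulting constrained linear-algebra problem on the span of $\{x, y, t\}$, respecting the defining relations $xy = -yx$, $tx = yt$, $ty = xt$. The key point is that the relations tightly couple the allowable images: preservation of $xy = -yx$ restricts $g(x), g(y)$ to a small family, and the cross relations $tx = yt$, $ty = xt$ force $g(t)$ to intertwine $g(x)$ and $g(y)$ in the same way $t$ intertwines $x$ and $y$. I anticipate that the solutions split into two families according to whether $g$ fixes or swaps the eigen-directions, and that one of these families is generated by the flip $\tau : x \leftrightarrow y$ (which is an automorphism since it preserves all relations) while the continuous part is a two-parameter family of scalings/shifts, yielding $\Aut(S) \iso \kk^2 \rtimes \langle \tau \rangle$.

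The main obstacle will be the final step: verifying that the constrained families of candidate linear maps actually extend to genuine automorphisms of the noncommutative algebra $S$ and that no further exotic (non-graded or affine) automorphisms exist. Specifically, I must confirm both that each candidate respects \emph{all three} relations simultaneously (the cross relations $tx = yt$ and $ty = xt$ are the delicate ones, as they mix the $A$-part and the $t$-part) and that the map is invertible. Ruling out automorphisms that introduce lower-degree terms — i.e., showing the problem really is linear and not merely affine — is where the precise structure of the discriminant factors and the UFD property of $C(S)$ must be invoked most carefully, since an unexpected constant could a priori enlarge the group, as the preceding remark about \cite[Proposition 4.5]{CPWZ2} warns.
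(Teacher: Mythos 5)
There is a genuine gap at the heart of your plan: you invoke the $\NN$-grading to constrain $g$ (``$g$ should respect this grading''), and from this conclude that $g(T)$ is a scalar multiple of $T$ and that degree bookkeeping separates the factors. But nothing guarantees a priori that an automorphism of $S$ is graded, or even affine --- establishing precisely this is the main content of the proof, so your argument is circular. The paper never assumes gradedness. Instead, since $C(S)=\kk[X,Y,T]$ is a UFD, it enumerates all six ways $g$ can permute the irreducible factors $T$, $Y$, $X^2-4Y$ up to scalar, and in each case exploits the identity $g(X)^2=g(X^2)$: the right-hand side is an explicit linear combination of the permuted factors, and demanding that it be a perfect square in $\kk[X,Y,T]$ either kills the case outright (four of the six) or forces relations among the scalars that bound $\deg g(X)$, $\deg g(Y)$, $\deg g(T)$ and hence force $g(x)$, $g(y)$, $g(t)$ to have degree $1$, i.e., $g$ is affine. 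Only after affineness is established do the relations of $S$ force $g$ to be graded, reducing everything to linear algebra. You acknowledge this obstacle in your last paragraph, but your proposed mechanism for overcoming it is the very assumption in question.

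Two secondary errors compound this. First, your degrees are wrong: with $\deg x=\deg y=\deg t=1$ one has $\deg X=2$, $\deg Y=\deg(X^2-4Y)=4$, $\deg T=2$, not the values you state. Second, your claim that interchanging $Y$ and $X^2-4Y$ is ``ruled out because their leading forms in $X$ differ'' is false at this stage of the argument: the paper's Case 4, namely $g(X^2-4Y)=\alpha Y$, $g(Y)=\beta(X^2-4Y)$, $g(T)=\gamma T$, is perfectly compatible with the square condition (take $\alpha=16\beta$), and it must be carried along as a viable case yielding affine automorphisms; it is eliminated only later by the relations of $S$. Finally, there are no ``shifts'' in $\Aut(S)$: the relation $xy=-yx$ forbids constant terms, so the identity component consists of the scalings $x\mapsto ax$, $y\mapsto ay$, $t\mapsto bt$ with $a,b\in\kk^\times$, not translations (contrast with the commutative example $\kk[x,y][t;\sigma]$ treated just before this one, where a translation parameter $d$ does survive).
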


\begin{proof}
We will apply the discriminant to show that all automorphisms are affine.
The relations of $S$ then imply that all automorphisms are in fact graded.
Once shown, it follows easily that if $g \in \Aut(S)$, then there exists
$a,b \in \kk^\times$ such that
\[
g(x) = ax, g(y) = ay, g(t) = bt \quad\text{or}\quad
g(x) = a y, g(y) = ax, g(t) = bt.\]

Let $g \in \Aut(S)$.
The discriminant is not dominating, but the center is a UFD and 
hence there are six cases for how $g$ permutes the factors of the discriminant.

{\parindent0pt
{\bf Case 1: $g(X^2-4Y) = \alpha(X^2 - 4Y), g(Y) = \beta Y, g(T) = \gamma T$.}

Then 
$g(X)^2 
	= g(X^2) = \alpha(X^2 - 4Y) + 4 g(Y) 
	= \alpha(X^2 - 4Y) + 4 \beta Y 
	= \alpha X^2 + 4(\beta-\alpha) Y$.  
For $\alpha X^2 + 4(\beta-\alpha) Y$ to be the square 
of some polynomial in $\kk[X,Y,T]$ we need $\alpha = \beta$, and then 
$g(X) = \sqrt{\alpha} X$, $g(Y) = \alpha Y$.
Then $g(Y) = g(x^2y^2) = -g((xy)^2) = -\alpha (xy)^2$ so that $g(xy)$ has degree 2
and so $g(x)$ and $g(y)$ have degree 1.  
If $g(T) = \gamma T$, then $g(t) = \sqrt{\gamma} t$. 
Hence $g$ is affine.

The other cases are easily reduced to case 1 or eliminated.

{\bf Case 2: $g(X^2-4Y) = \alpha(X^2 - 4Y), g(Y) = \beta T, g(T) = \gamma Y$.}

$g(X)^2= g(X^2) = \alpha(X^2-4Y) + 4 \beta T$ cannot happen in $\kk[X,Y,T]$.

{\bf Case 3: $g(X^2-4Y) = \alpha T, g(Y) = \beta Y, g(T) = \gamma (X^2-4Y)$.}
$g(X)^2= g(X^2) = \alpha T + 4 \beta Y$ cannot happen in $\kk[X,Y,T]$.

{\bf Case 4: $g(X^2-4Y) = \alpha Y, g(Y) = \beta (X^2 - 4Y), g(T) = \gamma T$.}
$g(X)^2= g(X^2) = \alpha Y + 4 \beta (X^2-4Y)$ so $\alpha = 16 \beta$ and 
$g(X) = 2 \sqrt{\beta} X$ and $g(Y) = -g(xy)^2 = \beta(X^2-4Y)$
so $g(x)$ and $g(y)$ have degree 1; 
further $g(t) = \sqrt{\gamma} t$ $g$ is affine.

{\bf Case 5: $g(X^2-4Y) = \alpha T, g(Y) = \beta (X^2 - 4Y), g(T) = \gamma Y$.}
$g(X)^2= g(X^2) = \alpha T + 4 \beta (X^2-4Y)$ cannot happen in $\kk[X,Y,T]$.

{\bf Case 6: $g(X^2-4Y) = \alpha Y, g(Y) = \beta T, g(T) = \gamma (X^2 - 4Y)$.}
$g(X)^2= g(X^2) = \alpha Y + 4 \beta T$ cannot happen in $\kk[X,Y,T]$.
}
\end{proof}

\subsection{An Ore extension of $V_3$}

Let $R=\kk_{-1}[x,y,z]$ and $\sigma$ the automorphism that interchanges $x$ and $y$.
Then $C(R[t;\sigma]) = \kk[x^2 + y^2, x^2y^2, z^2, t^2]$
($\sigma$ eliminates $xyz$ from $C(R)$).  
$\Aut(R)$ contains a free subgroup on two generators.

\begin{question} 
Does $\Aut(R[t;\sigma])$ also contain a free subgroup on two generators?
\end{question}

\subsection{An Ore extension of $W_2$}

Let $\sigma \in \Aut(W_2)$ be
given by $\sigma(x)=y$ and $\sigma(y)=x$.
Set $S=W_2[t;\sigma]$ so that $S$ satisfies
\[ xy+yx=1, \;\; tx=yt, \;\; ty = xt.\]
The center of $S$ is $C(S) = \kk[x^2+y^2,x^2y^2, t^2]$.
Set $X=x^2+y^2$, $Y=x^2y^2$, and $T=t^2$.
By Example \ref{ex.homog2} and Theorem \ref{thm.ore},
$d(S/C(S)) = T^8(4Y-1)^8(X^2 - 4Y)^8$.

\begin{proposition}
With notation above, $\Aut(S) \iso (\kk^\times \times \{-1,1\}) \rtimes \{\tau\}$ where $\tau$ is
the automorphism $x \leftrightarrow y$.
\end{proposition}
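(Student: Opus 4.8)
The plan is to mimic the structure of Proposition \ref{prop.v2ore}, using the discriminant to force all automorphisms to be affine, then graded, and finally to extract the group structure by a direct linear-algebra analysis. The key input is the discriminant computation $d(S/C(S)) =_{\kk^\times} T^8(4Y-1)^8(X^2-4Y)^8$, together with the fact that the center $C(S) = \kk[X,Y,T]$ is a polynomial ring, hence a UFD. Any $g \in \Aut(S)$ preserves $C(S)$ and sends the ideal generated by the discriminant to itself, so $g$ must permute the three irreducible factors $T$, $4Y-1$, and $X^2-4Y$ up to scalars (note $4Y-1$ is irreducible of degree $4$ in the $x,y$ grading but has a constant term, which is the crucial difference from the $V_2$ case).

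First I would enumerate the (at most six) ways $g$ can permute these three factors and rule out the impossible cases by degree and squaring considerations, exactly as in the six-case analysis of Proposition \ref{prop.v2ore}. The presence of the constant term in $4Y-1$ is what does the extra work here: since the discriminant factor $4Y-1$ is \emph{inhomogeneous} while $T$ and $X^2-4Y$ are homogeneous (in the grading where $\deg x = \deg y = \deg t = 1$), $g$ cannot send $4Y-1$ to a scalar multiple of $T$ or of $X^2-4Y$, nor conversely. This pins down $g(4Y-1) = \lambda(4Y-1)$, hence $g(Y) = \lambda Y + \text{const}$, and forces $g$ to preserve the grading after accounting for the constant — in particular $g$ is affine, and then the defining relations $xy+yx=1$, $tx=yt$, $ty=xt$ force $g$ to be graded (the constant in $xy+yx=1$ rigidifies the scaling). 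Once $g$ is graded, a short computation shows $g(t)$ must be a scalar multiple of $t$ and $\{g(x),g(y)\}$ must be a scaling or a swap-and-scaling of $\{x,y\}$, with the relation $xy+yx=1$ pinning the scalar on the $x,y$ part to lie in $\{-1,1\}$ (equivalently forcing the square of the scaling factor to be $1$).

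Finally I would organize the resulting maps into a group. The graded automorphisms are of the form $x \mapsto \epsilon x$, $y \mapsto \epsilon y$, $t \mapsto c\,t$ with $\epsilon \in \{-1,1\}$ and $c \in \kk^\times$ (the identity-on-$\{x,y\}$ component), together with those composing such a map with the swap $\tau: x \leftrightarrow y$. The sign $\epsilon$ and the scalar $c$ generate the normal subgroup $\kk^\times \times \{-1,1\}$, and $\tau$ supplies the order-two piece acting by conjugation, yielding $\Aut(S) \iso (\kk^\times \times \{-1,1\}) \rtimes \{\tau\}$.

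\textbf{The main obstacle} I anticipate is the affineness/gradedness step: one must argue carefully that no automorphism can introduce lower-order terms, and here the inhomogeneity of $4Y-1$ is both the obstruction and the tool. Unlike the purely homogeneous $V_2$ example, I cannot simply invoke a dominating-discriminant argument; instead I must exploit that $g$ fixes (up to scalar) an inhomogeneous polynomial, which constrains the top-degree and constant parts of $g$ simultaneously, and then leverage the relation $xy+yx=1$ — whose right-hand side is a unit — to eliminate any surviving freedom in the scalars. Verifying that the eliminated cases in the factor-permutation analysis genuinely cannot occur (that a sum like $\alpha T + 4\beta Y$ or $\alpha Y + 4\beta(X^2-4Y)$ fails to be a perfect square of an element of $\kk[X,Y,T]$, now with the inhomogeneous factor in play) is routine but must be done with care so that the constant terms are tracked correctly throughout.
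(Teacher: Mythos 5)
Your overall plan is the same as the paper's proof: use that $C(S)=\kk[X,Y,T]$ is a UFD and that any $g\in\Aut(S)$ preserves the discriminant ideal, so that $g$ permutes the three irreducible factors $T$, $4Y-1$, $X^2-4Y$ up to scalars; run the six-case analysis; conclude affineness, then gradedness from the relations, then extract the group by linear algebra. The genuine gap is the mechanism you feature for the elimination step: you claim $g$ cannot exchange $4Y-1$ with $T$ or $X^2-4Y$ ``because the former is inhomogeneous and the latter are homogeneous.'' This is circular. A priori the restriction of $g$ to $C(S)$ is only a ring automorphism of $\kk[X,Y,T]$, and ring automorphisms of a polynomial ring need not respect any grading (consider $X\mapsto X+1$); moreover $S$ itself is not graded---the relation $xy+yx=1$ is inhomogeneous---only filtered, and whether $g$ respects degrees is precisely what the affineness argument is supposed to establish. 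So homogeneity cannot be used to sort the factors.

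What actually rules out the bad cases (and is how the paper argues in all six cases) is the perfect-square constraint: from $X^2=(X^2-4Y)+(4Y-1)+1$ one gets $g(X)^2=g(X^2-4Y)+g(4Y-1)+1$, and the right-hand side must be a square in $\kk[X,Y,T]$. In your ``excluded'' case $g(X^2-4Y)=\alpha(X^2-4Y)$, $g(4Y-1)=\beta T$, this reads $g(X)^2=\alpha X^2-4\alpha Y+\beta T+1$, which is not a square; that, not inhomogeneity, is the obstruction. The constant term of $4Y-1$ does its real work inside the surviving case, where $g(X)^2=\alpha X^2+4(\beta-\alpha)Y+(1-\beta)$ forces $\alpha=\beta=1$, hence $g(X)=\pm X$ and $g(Y)=Y$; this added rigidity (compared with Proposition \ref{prop.v2ore}, where a free scalar survives) is what the inhomogeneity buys, and it enters through the squaring analysis, not through degree preservation. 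Since you also propose to carry out the six squaring checks with constants tracked, your plan is repairable, but as written the step ``$g(4Y-1)=\lambda(4Y-1)$, hence $g$ is affine'' is too quick: one still needs the square argument to pin down $g(X)$ and $\lambda=1$, and then the descent from the center to the generators (in the filtered domain $S$, $g(x^2y^2)=x^2y^2$ and $g(t^2)=\gamma t^2$ force $g(x),g(y),g(t)$ to have degree $1$). A final small point: the scalar on $x,y$ is confined to $\{\pm 1\}$ by the Ore relations $tx=yt$, $ty=xt$ in combination with $xy+yx=1$ (the latter alone only forces the scalars on $x$ and $y$ to be mutually inverse), so attributing that constraint to $xy+yx=1$ by itself is inaccurate.
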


\begin{proof}
The proof here is nearly identical to that in Proposition \ref{prop.v2ore}.
We will apply the discriminant to show that all automorphisms are affine.
The relations of $S$ then imply that all automorphisms are in fact graded.
Once shown, it follows easily that if $g \in \Aut(S)$, then there exists
$a \in \{-1,1\}$ and $b \in \kk^\times$ such that
\[
g(x) = a y, g(y) = a\inv x, g(t) = bt \quad\text{or}\quad
g(x) = ax, g(y) = a\inv y, g(t) = bt.\]

Let $g \in \Aut(S)$.
The discriminant is not dominating, but the center is a UFD and 
hence there are six cases for how $g$ permutes the factors of the discriminant.

{\parindent0pt
{\bf Case 1: $g(X^2-4Y) = \alpha(X^2 - 4Y), g(4Y-1) = \beta (4Y-1), g(T) = \gamma T$.}
Then 
$g(X)^2 
	= g(X^2) = \alpha(X^2 - 4Y) + g(4Y) 
	= \alpha(X^2 - 4Y) + \beta(4Y-1) + 1
	= \alpha X^2 + 4(\beta-\alpha) Y + (1-\beta)$.  
For $\alpha X^2 + 4(\beta-\alpha) Y$ to be the square 
of some polynomial in $\kk[X,Y,T]$ we need $\alpha = \beta=1$, and then 
$g(X) = X, g(Y) = Y$.
Then $g(Y) = g(x^2y^2) = -g((xy)^2) = -(xy)^2$ 
so that $g(xy)$ has degree 2
and so $g(x)$ and $g(y)$ have degree 1.
If $g(T) = \gamma T$, then $g(t) = \sqrt{\gamma} t$. 

The other cases are easily reduced to case 1 or eliminated.

{\bf Case 2: $g(X^2-4Y) = \alpha(X^2 - 4Y), g(4Y-1) = \beta T, g(T) = \gamma (4Y-1)$.}
$4Y-1$ has degree $4$ and it follows that $g(4Y-1)$ has degree $4$.
Since $T$ is of degree $2$, we cannot have $g(4Y-1)=\beta T$.

{\bf Case 3: $g(X^2-4Y) = \alpha T, g(4Y-1) = \beta (4Y-1), g(T) = \gamma (X^2-4Y)$.}
$g(X)^2= g(X^2) = \alpha T + 4 \beta (4Y-1)$ is not a square in $\kk[X,Y,T]$.

{\bf Case 4: $g(X^2-4Y) = \alpha (4Y-1), g(4Y-1) = \beta (X^2 - 4Y), g(T) = \gamma T$.}
$g(X)^2= g(X^2) = \alpha (4Y-1) + 4 \beta (X^2-4Y) = 2\beta X^2 + 4(\alpha-4\beta)Y - \alpha$.
This is not a square in $\kk[X,Y,T]$.

{\bf Case 5: $g(X^2-4Y) = \alpha T, g(4Y-1) = \beta (X^2 - 4Y), g(T) = \gamma (4Y-1)$.}
$g(X)^2= g(X^2) = 4 \alpha T + 4 \beta (X^2-4Y)$ is not a square in in $\kk[X,Y,T]$.

{\bf Case 6: $g(X^2-4Y) = \alpha (4Y-1), g(4Y-1) = \beta T, g(T) = \gamma (X^2 - 4Y)$.}
See Case 2.
}
\end{proof}

\subsection{The homogenization of $W_2$.}

Let $H=H(W_2)$ and $C=C(H)$.
In Example \ref{ex.homog1} it was shown that
$d(H/C) =_{\kk^\times} \left( 4 x^2y^2-t^4 \right)^2$.

$H$ is $\NN$-graded, so for $h \in H$, 
denote by $h_d$ the degree $d$ component

\begin{proposition}
With notation above, $\Aut(H) \iso (\kk^\times)^2 \rtimes \{\tau\}$
where $\tau$ is the automorphism $x \leftrightarrow y$.
\end{proposition}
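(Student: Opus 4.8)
The plan is to mimic the strategy used in Proposition \ref{prop.v2ore} and the $W_2[t;\sigma]$ proposition: first use the discriminant to force every automorphism to be affine, then use the defining relations of $H$ to upgrade ``affine'' to ``graded'', and finally solve the resulting finite linear-algebra problem. Recall that $H = H(W_2)$ is generated by $x,y,t$ subject to $xy+yx = t^2$, $tx = xt$, and $ty = yt$, so that $t$ is central and $H$ is $\NN$-graded with $x,y,t$ in degree $1$. Any $g \in \Aut(H)$ preserves the center $C = \kk[x^2,y^2,t]$ and hence preserves the ideal generated by the discriminant $d(H/C) =_{\kk^\times} (4x^2y^2 - t^4)^2$. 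Writing $u = x^2$, $v = y^2$, and $w = t$ (so $C = \kk[u,v,w]$), the relevant factor is $4uv - w^4$, which is irreducible in the UFD $C$; the first step is therefore to analyze how $g$ acts on this single irreducible factor.

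Next I would extract the affineness. Since $4uv - w^4$ is irreducible and $g$ permutes (up to scalar) the irreducible factors of the discriminant, $g$ must send $4uv-w^4$ to a scalar multiple of itself. Comparing degrees in the $\NN$-grading of $C$ (the factor has degree $8$) forces $g$ to preserve the relevant degree, and I expect this to pin down $\deg g(u) = \deg g(v) = 2$ and $\deg g(w) = 1$ after ruling out the mixed possibilities, exactly as the case analysis in the preceding propositions eliminates all but the degree-preserving options. Because $t = w$ is central and generates the degree-$1$ part of the center not coming from $x,y$, one deduces $g(t) = ct$ for some $c \in \kk^\times$, and then $g(x), g(y)$ must each have degree $1$. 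This is the step that ``shows all automorphisms are affine,'' and the remaining relations $tx=xt$, $ty=yt$, $xy+yx=t^2$ then force the linear parts to be genuinely graded (no constant terms), since a constant term in $g(x)$ or $g(y)$ would produce lower-degree terms incompatible with the homogeneous relation $xy+yx = t^2$.

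Once $g$ is graded, it reduces to a linear-algebra computation: write $g(x) = a_1 x + a_2 y + a_3 t$, $g(y) = b_1 x + b_2 y + b_3 t$, $g(t) = ct$, and impose the three relations. The centrality relations $tx=xt$ and $ty=yt$ are automatic once $t$ is central, so the only constraint is $g(x)g(y) + g(y)g(x) = g(t)^2 = c^2 t^2$. Expanding the left side using $xy+yx = t^2$ and $x^2, y^2$ being the relevant quadratic terms, I expect the cross terms to collapse and yield conditions that (after checking the action on $u=x^2$ and $v=y^2$ matches the permutation of factors determined above) force either $g$ to scale $x$ and $y$ independently or to swap them, with $t$ scaled in a compatible way. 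This should produce precisely the group $(\kk^\times)^2 \rtimes \{\tau\}$ with $\tau: x \leftrightarrow y$.

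The main obstacle I anticipate is the degree/affineness step rather than the final linear algebra. Because the discriminant here has only a single irreducible factor (unlike the multi-factor cases in Proposition \ref{prop.v2ore}), the ``permutation of factors'' argument gives less immediate leverage, so I must work harder to rule out automorphisms that mix the central generators $u, v, w$ in degree-inhomogeneous ways; the key will be that $4uv - w^4$ has a very rigid shape as a polynomial in $\kk[u,v,w]$, and demanding that $g(u), g(v), g(w)$ be squares or degree-$1$ elements of $H$ (since $u=x^2$, $v=y^2$ must remain squares of degree-$1$ elements) sharply limits the possibilities. I would carry out this factor-analysis carefully, using the requirement that $g(x)^2 = g(u)$ be a perfect square in $H$ to eliminate the off-diagonal cases, paralleling the repeated ``cannot happen in $\kk[X,Y,T]$'' eliminations in the earlier proofs.
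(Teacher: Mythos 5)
Your global architecture (use the discriminant to force affineness, then gradedness, then linear algebra) mirrors the paper's, and your endgame would indeed yield $(\kk^\times)^2 \rtimes \{\tau\}$. But the step you yourself flag as the main obstacle --- deducing $g(t) = ct$ and affineness from the discriminant alone --- is a genuine gap, and it cannot be closed along the lines you propose. Preservation up to scalar of the single irreducible factor $4uv - w^4$ (here $u = x^2$, $v = y^2$, $w = t$, and incidentally this factor has degree $4$ in the grading of $H$, not $8$; the discriminant is its square) puts essentially no degree restriction on an automorphism of the center $C = \kk[u,v,w]$: the non-affine triangular automorphism
\begin{equation*}
u \mapsto u, \qquad w \mapsto w + u, \qquad v \mapsto v + w^3 + \tfrac{3}{2}w^2u + wu^2 + \tfrac{1}{4}u^3
\end{equation*}
fixes $4uv - w^4$ \emph{exactly}. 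So ``comparing degrees'' does not pin down $\deg g(w) = 1$ and $\deg g(u) = \deg g(v) = 2$; shears of this kind are precisely what must be excluded, and they can only be excluded by using that the map on $C$ is the restriction of an automorphism of the noncommutative algebra $H$. Your proposed way of invoking that extra structure is circular: requiring $g(u)$ to ``remain the square of a degree-one element'' assumes the very fact ($\deg g(x) = 1$) that is at stake, and similarly ``$t$ generates the degree-one part of the center, hence $g(t) = ct$'' presupposes that $g$ preserves degrees.

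The paper closes this hole with an input of a completely different nature: the classification of height one prime ideals of $H$ from \cite[Theorem 6.6]{gaddis}, which says every such prime is $(t)$, $(xy - yx)$, or $(g)$ with $\deg(g) > 1$. Since an automorphism $\phi$ permutes height one primes and $(t)$ is the only one generated in degree one, one gets $\phi(t) = \alpha t$ first; only \emph{then} does the discriminant enter, via $\phi(4x^2y^2 - t^4) =_{\kk^\times} 4x^2y^2 - t^4$ and $\deg \phi(t^4) = 4$, which forces $\deg \phi(x^2y^2) = 4$ and hence $\deg\phi(x) = \deg\phi(y) = 1$, i.e.\ $\phi$ is affine. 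After that, the paper's degree-by-degree analysis of $\phi(xy + yx - t^2) = 0$ (using centrality of $t$ to kill the $x,y$ coefficients of $\phi(t)$, then the degree-$0$ and degree-$1$ parts to kill constant terms) is exactly the linear algebra you describe, yielding either $x \mapsto a_1x,\ y \mapsto b_2y$ or the swapped version, with $c_3^2 = a_1b_2$. To repair your proof you would need either this prime-ideal classification or some substitute structural fact about $H$ beyond the discriminant; as written, the discriminant by itself is too weak to start the argument.
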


\begin{proof}
Let $I$ be a height one prime ideal of $H$.
By \cite[Theorem 6.6]{gaddis},
either $I=(t)$, $I=(xy-yx)$, or
$I=(g)$ with $\deg(g)>1$.
Given $\phi \in \Aut(H)$, it follows that 
$\phi(\deg(r)) \geq \deg(r)$ and so
$(t)$ is the only height one prime ideal
generated by a degree one element.
Hence $\phi(t) = \alpha t$ for some $\alpha \in \kk^\times$.
Thus, $\deg(\phi(t^2)) = 2$ and so $\deg(\phi(x^2y^2)) = 4$.
We conclude that $\phi$ is affine.

Let $\phi \in \Aut(H)$ and write
\[ 
\phi(x) = a_0 + a_1 x + a_2 y + a_3 t, \;\;\; 
\phi(y) = b_0 + b_1 x + b_2 y + b_3 t, \;\;\; 
\phi(t) = c_0 + c_1 x + c_2 y + c_3 t,\]
with $a_i,b_i,c_i \in \kk$ for $i=0,\hdots,3$. 
Because $t$ is central, then $c_1=c_2=0$.
Hence,
\[ 0 = \phi(xy+yx-t^2)_0 = 2a_0b_0 - c_0^2\]
and
\begin{align*}
0 	&= \phi(xy+yx-t^2)_1  \\
	&= 2 \left[ a_0 (b_1 x + b_2 y + b_3 t) + b_0 (a_1 x + a_2 y + a_3 t) - c_0c_3t\right] \\
	&= 2 \left[ (a_0b_1 + b_0a_1)x + (a_0b_2+b_0a_2)y + (a_0b_3+b_0a_3 - c_0c_3)t\right].
\end{align*}
If $b_0=0$, then $c_0=0$ and
$a_0b_1=a_0b_2=a_0b_3=0$.
Since $\phi_1(y) \neq 0$, then $a_0=0$.
Suppose $b_0 \neq 0$, then 
$-\frac{a_0}{b_0} = \frac{a_1}{b_1} = \frac{a_2}{b_2}$
so $a_1b_2-a_2b_1=0$ and $\phi$ is not an isomorphism.
Hence, we conclude that $a_0=b_0=c_0=0$.
Thus,
\begin{align*}
0 	&= \phi(xy+yx-t^2) \\
	&= 2(a_1b_1 x^2 + a_2b_2 y^2) + (a_1b_2+a_2b_1)xy + (a_2b_1+a_1b_2)yx - c_3^2 t^2 \\
	&= 2(a_1b_1 x^2 + a_2b_2 y^2) + (a_1b_2 + a_2b_1-c_3^2)t^2.
\end{align*}
We have two cases. Either $a_1=b_2=0$ or $a_2=b_1=0$
and $c_3$ is determined by the $a_i,b_j$.
\end{proof}

\subsection{The automorphism group of $V_2 \# S_2$}

Set $A=V_2 \# S_2$ and write $\cS_2 = \{e,g\}$ as before.
Example \ref{ex.skgrp1} shows that 
$d(A/C(A))=_{\kk^\times} [Y^2 (X^2-4Y)^2]^2 \tensor e$
where $X=x^2+y^2$ and $Y=x^2y^2$.

Because $C(A) \iso E_2$ is a PID,
then any automorphism of $A$
either preserves the factors $Y$ and $X^2-4Y$ 
or else it interchanges them
(up to a scalar).
Suppose $\phi \in \Aut(V_2)$.
It follows easily that $\deg(\phi(Y)) \leq 4$.
If $\phi$ preserves the factors $Y$ and $X^2-4Y$,
then $\phi(Y)=k_1Y$ and $\phi(X^2-4Y)=k_2(X^2-4Y)$
for $k_1,k_2 \in \kk^\times$. We have
\[ k_2(X^2-4Y) = \phi(X^2-4Y) = \phi(X)^2 - 4k_1Y.\]
Thus, $\phi(X)^2 = k_2(X^2-4Y) - 4k_1Y$.
As $V_2$ is a domain and 
the degree of the right-hand side is at most 4, 
then the degree of $\phi(X)$ is at most 2.
A similar argument shows the same result
when $\phi$ interchanges the factors.

\begin{lemma}
\label{lem.idimg}
Let $\phi \in \Aut(A)$,
then $\phi(1 \tensor g) = \pm(1 \tensor g)$.
\end{lemma}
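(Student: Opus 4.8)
The plan is to set $u = \phi(1 \tensor g)$ and exploit that $1 \tensor g$ is an involution: since $(1\tensor g)^2 = 1 \tensor e = 1$ and $\phi$ is a $\kk$-algebra automorphism (so $\phi(1)=1$), we get $u^2 = 1$ with $u \in A^\times$. Grade $A = V_2 \# \cS_2$ by $\deg x = \deg y = 1$ and $\deg(1\tensor g) = 0$; this is an algebra grading because $\cS_2$ permutes $x,y$, and its degree-zero part is $A_0 = \kk \oplus \kk(1\tensor g) = \kk\cS_2$. Writing $u = \sum_{d\ge 0} u_d$ with $u_d \in A_d$, the degree-zero component of $u^2 = 1$ reads $u_0^2 = 1$, so $u_0$ is an involution of the two-dimensional semisimple algebra $A_0 = \kk\cS_2$; the only such elements are $\pm 1$ and $\pm(1\tensor g)$.

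First I would dispose of the case $u_0 = \pm 1$. Here $u_0$ is a central scalar unit, so comparing graded components of $u^2 = 1$ in successive positive degrees forces $u_d = 0$ for all $d \ge 1$ by induction; hence $u = \pm 1 = \phi(\pm 1)$, contradicting injectivity of $\phi$ since $1\tensor g \ne \pm 1$. Thus $u_0 = \pm(1\tensor g)$, and it remains to show the higher components $u_d$ ($d\ge 1$) all vanish, i.e. that $u$ is homogeneous of degree $0$.

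For this last step I would bring in the automorphism hypotheses beyond $u^2 = 1$. Applying $\phi$ to the defining relations $(1\tensor g)x = y(1\tensor g)$ and $(1\tensor g)y = x(1\tensor g)$ gives $u\,\phi(x) = \phi(y)\,u$ and $u\,\phi(y) = \phi(x)\,u$, while applying it to $(1\tensor g)(x^2-y^2) = -(x^2-y^2)(1\tensor g)$ gives $u\,\phi(x^2-y^2) = -\phi(x^2-y^2)\,u$. I would combine these with the degree control on the center obtained in the discussion preceding the lemma: $\deg\phi(X) \le 2$ for $X = x^2+y^2$, $\deg\phi(Y)\le 4$ for $Y = x^2y^2$, and the resulting bound on the discriminant factor $X^2-4Y = (x^2-y^2)^2$. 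Since conjugation by $1\tensor g$ is the identity on $A_0$, the anticommutation relation forces the degree-zero part of $\phi(x^2-y^2)$ to vanish (a unit $u_0 = \pm(1\tensor g)$ commutes with all of $A_0$, so nothing nonzero in $A_0$ can anticommute with it); feeding this back through $u^2 = 1$ and the two transported relations should propagate upward and annihilate every $u_d$ with $d \ge 1$, leaving $u = u_0 = \pm(1\tensor g)$.

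The main obstacle is precisely this propagation, and its difficulty is structural: $A$ is \emph{not} a domain. Indeed $A$ contains nonzero homogeneous square-zero elements, for instance $(x-y)(1\tensor e + 1\tensor g)$, so one cannot conclude that a top-degree component vanishes merely because its square does. This is exactly why $u^2 = 1$ alone is insufficient: there genuinely exist involutions other than $\pm(1\tensor g)$ whose degree-zero part equals $1\tensor g$, such as $(1\tensor g) + (x-y)(1\tensor e + 1\tensor g)$. The real content of the lemma is that no such inhomogeneous involution lies in the $\Aut(A)$-orbit of $1\tensor g$, so the surjectivity and the relations of $\phi$ must be used in an essential rather than merely formal way. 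I expect the cleanest execution is to first prove that every $\phi \in \Aut(A)$ is graded, leveraging the center degree bounds together with the fact that $A$ is generated over $A_0$ by $A_1$, after which the conclusion is immediate from the first paragraph.
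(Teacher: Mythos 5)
Your proposal follows the same skeleton as the paper's own proof: write $u=\phi(1\tensor g)=a\tensor e+b\tensor g$, expand $u^2=1\tensor e$ into graded components, classify the degree-zero part ($u_0=\pm 1$ or $u_0=\pm(1\tensor g)$), and try to kill the higher components by induction. In the case $u_0=\pm 1$ your induction (plus the injectivity remark, which the paper leaves implicit) is exactly the paper's computation, where the degree-$n$ coefficients are $2a_0a_n$ and $2a_0b_n$. But your proposal stops short precisely at the case that matters, $u_0=\pm(1\tensor g)$: your final paragraph is a plan (``first prove every automorphism is graded''), not an argument, so as written there is a genuine gap. You have, however, put your finger on the exact flaw in the paper's proof: the paper treats only the case $a_0=\pm 1$, $b_0=0$ and declares the remaining cases ``similar,'' whereas for $a_0=0$, $b_0=\pm 1$ the degree-$n$ coefficients become $a_n+g.a_n$ and $b_n+g.b_n$, which vanish whenever $a_n,b_n$ lie in the $(-1)$-eigenspace of $g$; your square-zero element shows this difference is real, not cosmetic.

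Unfortunately the gap cannot be closed, because the statement itself is false, so your plan of proving that all automorphisms are graded is unattainable. Let $q=(x-y)\tensor e+(x-y)\tensor g$, which is your element $(x-y)(1\tensor e+1\tensor g)$. Using $g.(x-y)=-(x-y)$, the four terms of $q^2$ in the skew group product cancel in pairs, so $q^2=0$ and $w=1\tensor e+\tfrac12 q$ is a unit of $A$ with $w^{-1}=1\tensor e-\tfrac12 q$. Moreover $q(1\tensor g)=q$ and $(1\tensor g)q=-q$, hence the inner automorphism $\phi_w=w(\,\cdot\,)w^{-1}\in\Aut(A)$ satisfies
\begin{equation*}
\phi_w(1\tensor g)=w(1\tensor g)w^{-1}=\bigl((1\tensor g)+\tfrac12 q\bigr)\bigl(1\tensor e-\tfrac12 q\bigr)=(1\tensor g)+q=(x-y)\tensor e+(1+x-y)\tensor g\neq\pm(1\tensor g).
\end{equation*}
So the inhomogeneous involution you exhibited \emph{does} lie in the $\Aut(A)$-orbit of $1\tensor g$ --- via an inner automorphism, which is visibly neither graded nor affine --- and the ``real content'' you ascribe to the lemma is itself false. (There is no conflict with Lemma \ref{lem.noInnerSn}, which concerns inner automorphisms of $V_2$, not of $A$; note also that $\phi_w$ fixes $C(A)$ pointwise, so it is invisible to the discriminant bounds used before the lemma. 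This counterexample likewise undercuts the description of $\Aut(V_2\#\cS_2)$ that the paper derives from the lemma.) In summary: your criticism of the induction is correct and sharper than the paper's treatment, but your proposed completion cannot work; the statement could only survive in a weakened form, e.g.\ restricted to graded automorphisms or to automorphisms modulo inner ones.
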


\begin{proof}
Write $\phi(1 \tensor g) = a \tensor e + b \tensor g$.
We have $(1 \tensor g)^2 = 1 \tensor e$, so
\[ 1 \tensor e 
	= (a \tensor e + b \tensor g)^2
	= (a^2 + b (g.b)) \tensor e + (ab + b (g.a)) \tensor g.
\]
Hence, $a^2 + b (g.b) = 1$ and $ab + b (g.a) = 0$.
Write $a = a_0 + a_1 + \cdots + a_d$ where $\deg(a_k)=k$
and similarly for $b$.
We have $0 = (ab + b (g.a))_0 = 2a_0b_0$
and $1 = (a^2 + b (g.b))_0 = a_0^2 + b_0^2$.
Thus, either $a_0 = \pm 1$ and $b_0 = 0$, or $b_0 = \pm 1$ and $a_0 = 0$.

Suppose $a_0=1$ and $b_0 = 0$. The remaining cases are similar.
Then $0 = (a^2 + b (g.b))_1 = 2a_1$, so $a_1=0$,
and $0=(ab + b (g.a))_1 = 2a_0b_1$, so $b_1=0$.

We proceed by induction. 
Suppose $a_k = b_k = 0$ for all $k=1,\hdots,n-1$. Then
\[ 0 = (a^2 + b (g.b))_n = \left((a_0 + a_n)^2\right)_n = 2a_0a_n,\]
so $a_n = 0$. Furthermore,
$0 = (ab + b (g.a))_n = 2a_0b_n$, so $b_n=0$.
\end{proof}

Throughout, let $\phi \in \Aut(A)$ and write
$\phi(x \tensor e) = r \tensor e + s \tensor g$.

As a consequence of the Lemma \ref{lem.idimg} we have
\[\phi(y \tensor e)
= \phi((1 \tensor g)(x \tensor e)(1 \tensor g))
= (1 \tensor g)\phi(x \tensor e)(1 \tensor g).\]
Hence,
\[\phi(y \tensor e) 
= (1 \tensor g)(r \tensor e + s \tensor g)(1 \tensor g)
= g.r \tensor e + g.s \tensor g.\]
Moreover, $x \tensor g = (x \tensor e)(1 \tensor g)$
and $y \tensor g = (y \tensor e)(1 \tensor g)$.
Thus, the automorphism $\phi$ is completely determined
by the choice of $r$ and $s$.

Hence, we have the equations,
\begin{align}
\label{eq1}
\phi((x^2+y^2) \tensor e)
	&= \left(r^2 + s(g.s) + g.r^2 + (g.s)s\right) \tensor e
		+ \left(rs+s(g.r) + g.(rs) + (g.s)r\right) \tensor g, \\
\label{eq2}
\phi((xy+yx) \tensor e)
	&= \left(r(g.r) + s^2 + (g.r)r + (g.s)^2\right) \tensor e
		+ \left(r(g.s) + sr + (g.r)s + (g.s)(g.r)\right) \tensor g.
\end{align}

\begin{lemma}
The degree zero components of $r$ and $s$ are zero.
\end{lemma}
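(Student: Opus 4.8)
The plan is to feed $\phi$ the single defining relation $xy+yx = 0$ of $V_2$, which is the cleanest relation at hand because it has vanishing constant term. Since $\phi$ is an algebra homomorphism and $(xy+yx)\tensor e = 0$ in $A$, one has $\phi\big((xy+yx)\tensor e\big) = 0$, and this is precisely the expression computed in \eqref{eq2}. Thus both the $\tensor e$ component and the $\tensor g$ component of \eqref{eq2} must vanish identically in $A$. I would write out these two vanishing equations in terms of $r$, $s$, $g.r$, and $g.s$, and then restrict attention to their degree-zero parts.

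The observation that makes the computation trivial is that $g$ merely interchanges $x$ and $y$ and so preserves the $\NN$-grading of $A$ (with $x,y$ in degree $1$ and $g$ in degree $0$); in particular $g$ fixes scalars, so that $(g.r)_0 = r_0$ and $(g.s)_0 = s_0$. Reading off the degree-zero component of the $\tensor e$ part of the vanishing \eqref{eq2} then gives $2r_0^2 + 2s_0^2 = 0$, while the degree-zero component of the $\tensor g$ part gives $4r_0 s_0 = 0$. As $\kk$ has characteristic zero, these reduce to $r_0^2 + s_0^2 = 0$ and $r_0 s_0 = 0$.

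The conclusion is then immediate: the relation $r_0 s_0 = 0$ forces one of $r_0, s_0$ to vanish, and substituting this into $r_0^2 + s_0^2 = 0$ forces the other to vanish as well. Hence $r_0 = s_0 = 0$, which is exactly the assertion of the lemma.

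I do not expect a real obstacle here, since everything reduces to extracting constant terms from a relation $\phi$ is forced to respect. The only point requiring care is to use \emph{both} components of the vanishing relation at once: the $\tensor e$ component alone yields only $r_0^2 = -s_0^2$, which does not by itself force either scalar to be zero, and it is the $\tensor g$ component $r_0 s_0 = 0$ that breaks the tie. Correspondingly, the one genuinely choice-laden step is selecting the relation $xy+yx = 0$ rather than a central relation such as the one governing $x^2+y^2$, whose image under $\phi$ would be far less transparent.
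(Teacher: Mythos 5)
Your proof is correct and follows essentially the same route as the paper's: apply $\phi$ to the relation $xy+yx=0$, extract the degree-zero parts of both the $\tensor e$ and $\tensor g$ components of \eqref{eq2} to get $r_0^2+s_0^2=0$ and $r_0s_0=0$, and combine them. The paper's proof is exactly this, stated more tersely; your added remark that both equations are genuinely needed (since $r_0^2+s_0^2=0$ alone does not force vanishing over an algebraically closed field) is a correct and worthwhile clarification.
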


\begin{proof}
Since $xy+yx=0$, one has $0 = \phi((xy+yx) \tensor e)$.
By restricting \eqref{eq2} to the degree
zero component we find $r_0^2+s_0^2=0$ and $r_0s_0=0$.
The result now follows.
\end{proof}

\begin{lemma}
\label{lem.eqdeg}
Suppose $\deg(r) > \deg(s) \geq 1$, then $s=0$.
Similarly, if $\deg(s)>\deg(r) \geq 1$, then $r=0$.
\end{lemma}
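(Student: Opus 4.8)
The plan is to argue by contradiction, comparing top-degree components in two identities that $r$ and $s$ must satisfy. First I would assemble these identities. Since $(xy+yx)\tensor e = 0$ in $A$, applying $\phi$ and reading off the $\tensor e$-component of \eqref{eq2} gives the relation
\[ r(g.r) + (g.r)r + s^2 + (g.s)^2 = 0. \]
Since $(x^2+y^2)\tensor e$ is central, $\phi((x^2+y^2)\tensor e)$ is central; by the degree estimate $\deg\phi(x^2+y^2)\le 2$ established earlier in this subsection (via the factorization of the discriminant), the $\tensor e$-component of \eqref{eq1}, namely
\[ r^2 + (g.r)^2 + s(g.s) + (g.s)s, \]
has degree at most $2$.

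Next I would extract leading terms. Assuming $\deg r > \deg s \ge 1$, set $d = \deg r$ (so $d\ge 2$) and let $P = r_d$ be the top homogeneous component of $r$; since $g$ merely permutes $x$ and $y$ it preserves degree, so the top component of $g.r$ is $g.P$. Because the $s$-terms in both expressions have degree $2\deg s < 2d$, they do not contribute in degree $2d$. Hence the degree-$2d$ part of the first relation reads $P(g.P) + (g.P)P = 0$, while the degree bound ($2 < 2d$) forces the degree-$2d$ part of the second expression to vanish, giving $P^2 + (g.P)^2 = 0$. Adding these two identities yields $(P + g.P)^2 = 0$; as $V_2$ is a domain, $P + g.P = 0$, and substituting $g.P = -P$ back into $P^2 + (g.P)^2 = 0$ gives $2P^2 = 0$, hence $P = 0$. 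This contradicts $\deg r = d$, and the contradiction shows that no such $r,s$ with $\deg r>\deg s\ge 1$ can occur; equivalently, $s = 0$.

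The second claim I would handle by the identical mechanism with the roles of the two identities interchanged. Assuming $\deg s > \deg r \ge 1$ and writing $Q = s_{\deg s}$ (with $\deg s\ge 2$), it is now the relation coming from $xy+yx$ that contributes the sum of squares $Q^2 + (g.Q)^2 = 0$ in top degree, while the degree bound contributes the anticommutator $Q(g.Q) + (g.Q)Q = 0$. Adding gives $(Q + g.Q)^2 = 0$, hence $Q = 0$, contradicting $\deg s \ge 1$, so $r = 0$.

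The only real bookkeeping, and the one place I would be careful, is tracking which of the two identities supplies the anticommutator and which supplies the sum of two squares in the top degree, since this switches between the two claims. Beyond that, the argument is routine: the strict degree inequality guarantees that the $s$-terms (respectively $r$-terms) drop out of the top-degree comparison, and the domain property of $V_2$ is used twice to pass from a vanishing square to a vanishing element. I do not anticipate a substantive obstacle past verifying these degree comparisons.
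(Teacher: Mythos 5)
Your proof is correct, but it diverges from the paper's at the decisive step, so it is worth comparing the two. Both arguments set up the same contradiction: assume $\deg(r) = d > \deg(s) \geq 1$, note $d \geq 2$, pass to the degree-$2d$ component, and use the bound $\deg(\phi((x^2+y^2)\tensor e)) \leq 2$ to obtain $r_d^2 + (g.r_d)^2 = 0$, exactly as you do. At that point the paper stays with this single relation and finishes by decomposing $r_d = r_+ + r_-$ into the $\pm 1$-eigenspaces of $g$, rewriting the relation as $2(r_+^2 + r_-^2) = 0$, and concluding $r_d = 0$ from the disjointness of the weight spaces. You instead bring in the second identity --- the $\tensor e$-component of \eqref{eq2}, which vanishes identically because $xy+yx = 0$ in $V_2$ --- to get the anticommutator $r_d(g.r_d) + (g.r_d)r_d = 0$ in top degree, add the two relations to form the perfect square $(r_d + g.r_d)^2 = 0$, and finish using twice that $V_2$ is a domain (plus characteristic zero). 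Your route costs one extra identity but buys a more elementary and arguably more robust ending: the paper's one-line appeal to disjoint weight spaces is delicate, since both $r_+^2$ and $r_-^2$ lie in the \emph{positive} weight space, and the implication ``$r_+^2 + r_-^2 = 0$ forces $r_+ = r_- = 0$'' actually fails in degree $1$ (take $r_+ = x+y$ and $r_- = i(x-y)$ in $V_2$), so it genuinely requires $d \geq 2$ and more care than the text supplies; your square-completion argument sidesteps that subtlety entirely, with $d \geq 2$ entering only through the degree bound, where its role is unambiguous. Your closing remark that the two identities exchange roles (anticommutator versus sum of squares) between the two cases of the lemma also explains cleanly why the paper switches from \eqref{eq1} to \eqref{eq2} when treating $\deg(s) > \deg(r) \geq 1$.
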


\begin{proof}
Suppose $\deg(r) > \deg(s) \geq 1$ and
write $r=r_1 + \cdots + r_d$ where $\deg(r_k)=k$
and by hypothesis $d>1$.
Because $\phi$ is an automorphism,
then $\deg(\phi((x^2+y^2) \tensor e)) \leq 2$,
as was noted before Lemma \ref{lem.idimg}.
By \eqref{eq1},
$\left(r^2 + s(g.s) + g.r^2 + (g.s)s\right)_{2d}=0$,
then we have $r_d^2 + (g.r_d)^2 = 0$.

Because the action of $g$ is diagonalizable,
we can decompose $r_d$ uniquely
into a sum of elements from the two weight spaces,
so $r_d=r_+ + r_-$ where $g.r_+ = r_+$ and $g.r_- = -r_-$.
We then have
\[ 0 = r_d^2 + (g.r_d)^2 = 2(r_+^2 + r_-^2).\]
Because the weight spaces are disjoint, we conclude that $r_d=0$.

A similar argument holds in the case
$\deg(s)>\deg(r)\geq 1$ 
but we use \eqref{eq2} instead of \eqref{eq1}.
\end{proof}

Write $\hat{r}_k = r_k + g.r_k$ and $\hat{s}_k = s_k + g.s_k$
so that both $\hat{r}_k$ and $\hat{s}_k$
are fixed by the action of $g$
and $\hat{r}_k=0$ if and only if 
$r_k$ belongs to the negative weight space.
Since $(x+y)^2 = x^2 + y^2$ in $V_2$, then
\begin{align*}
\phi( (x^2+y^2) \tensor e )
	&= \phi\left( (x+y) \tensor e \right)^2 
	= \left[\sum_{k=1}^d \hat{r}_k \tensor e 
			+ \hat{s}_k \tensor g\right]^2.
\end{align*}
Let $\ell \in \{2,\hdots,d\}$ be the largest degree
such that the above expression is nonzero.
Then we have
\begin{align*}
0 	&= (\hat{r}_\ell \tensor e + \hat{s}_\ell \tensor g)^2 \\
	&= \left(\hat{r}_\ell^2 
		+ \hat{s}_\ell(g.\hat{s}_\ell) \right) \tensor e 
	+ \left(\hat{r}_\ell\hat{s}_\ell 
		+ \hat{s}_\ell(g.\hat{r}_\ell)\right) \tensor g  \\
	&= \left(\hat{r}_\ell^2 
		+ \hat{s}_\ell^2 \right) \tensor e 
	+ \left(\hat{r}_\ell\hat{s}_\ell 
		+ \hat{s}_\ell\hat{r}_\ell\right) \tensor g.
\end{align*}
Each component must be zero and so
$(\hat{r}_\ell+ \hat{s}_\ell)^2=0$. 
Thus, $\hat{r}_\ell=-\hat{s}_\ell$ but because 
$\hat{r}_\ell^2 + \hat{s}_\ell^2=0$ then 
$\hat{r}_\ell=\hat{s}_\ell=0$.
Hence, all higher degree components of $r$ and $s$ are contained
in the negative weight space.

Write 
$\phi( (x+y) \tensor e) = u \tensor e + v \tensor g$
with $u,v \in V_2$.
It follows from Lemma \ref{lem.eqdeg} that $d=\deg(u)=\deg(v)$.
Moreover, $u_k,v_k$ are contained in the negative weight space for $k>1$. 
Then we have
\[
\phi( (x^2+y^2) \tensor e) 
	= \phi \left( [(x+y) \tensor e ]^2 \right)
	= \left[ u \tensor e + v \tensor g \right]^2
	= (u^2-v^2) \tensor e + (uv - vu) \tensor g.
\]
Assume $d>1$.
In the top degree we have
$(u_d^2-v_d^2)=0$ and $(u_dv_d-v_du_d)=0$
so $(u_d-v_d)(u_d+v_d)=0$.
Hence, $u_d = \pm v_d$.

\noindent {\bf Case 1 ($u_d=v_d$):}
We claim $u_k=v_k$ for all $k \leq d$.
Suppose this holds for some $\ell \leq d$.
\begin{align*}
0 
&= \left[ u^2 - v^2 \right]_{d+\ell-1}
 = \left[ (u_1 + \cdots + u_d)^2 
	- (v_1 + \cdots + v_d)^2 \right]_{d+\ell-1} \\
&= \left[ (u_1 + \cdots + u_d)^2
	-(v_1 + \cdots + v_{\ell-1} + u_{\ell} + \cdots + u_d)^2 \right]_{d+\ell-1} \\
&= u_{\ell-1}u_d + u_du_{\ell-1} - v_{\ell-1}u_d - u_dv_{\ell-1}. \\
0 
&= [uv-vu]_{d-\ell+1}
 = \left[ (u_1 + \cdots u_d)(v_1 + \cdots + v_d)
 		- (v_1 + \cdots + v_d)(u_1 + \cdots u_d) \right]_{d-\ell+1} \\
& = \left[ 
(u_1 + \cdots u_d)(v_1 + \cdots + v_{\ell-1} + u_{\ell} + \cdots + u_d)
- (v_1 + \cdots + v_{\ell-1} + u_{\ell} + \cdots + u_d)(u_1 + \cdots u_d) \right]_{d-\ell+1} \\
&= u_{\ell-1}u_d + u_dv_{\ell-1} - v_{\ell-1}u_d - u_du_{\ell-1}.
\end{align*}
Combining these gives
\begin{align*}
	0 = u_du_{\ell-1} - u_dv_{\ell-1} = u_d(u_{\ell-1}-v_{\ell-1}).
\end{align*}
Hence, $u_{\ell-1}=v_{\ell-1}$.

\noindent {\bf Case 2 ($u_d=-v_d$):}
This case follows similarly to the above.

We conclude that $u_1=\pm v_1$.
An identical argument holds for $(x-y) \tensor e$.
Thus, there exists $\alpha, \beta, \gamma, \delta \in \kk$ such that,
\begin{align*}
\phi( (x+y) \tensor e)_1 &= \alpha (x+y) \tensor e + \gamma (x+y) \tensor g \\
\phi( (x+y) \tensor g)_1 &= \gamma (x+y) \tensor e + \alpha (x+y) \tensor g \\
\phi( (x-y) \tensor e)_1 &= \beta (x-y) \tensor e + \delta (x-y) \tensor g \\
\phi( (x-y) \tensor g)_1 &= \delta (x-y) \tensor e + \beta (x-y) \tensor g.
\end{align*}
These elements generate the degree 1 component of $V_2$ and so
the following matrix must be nonsingluar,
\[ M = \begin{bmatrix}
	\alpha & 0 & \gamma & 0 \\
	\gamma & 0 & \alpha & 0 \\
	0 & \beta & 0 & \delta \\
	0 & \delta & 0 & \beta
	\end{bmatrix}.\]
But $\det(M) = -(\beta^2-\delta^2)(\alpha^2-\gamma^2)$, a contradiction
since the above argument gave us $\alpha=\pm \gamma$.
Note that we assumed above that we are in the case that
$\phi(1 \tensor g) = 1 \tensor g$ but the same argument works
in the case $\phi(1 \tensor g) = -1 \tensor g$.

Write
\[ \phi(x \tensor e) = \left(a(x+y) + b(x-y)\right) \tensor e +  \left(c(x+y) + d(x-y)\right) \tensor g.\]
Because $\phi$ is an isomorphism and the image of $x \tensor e$
determines the isomorphism, then $a \neq \pm c$ and $b \neq \pm d$.

\begin{theorem}
Let $\phi \in \Aut(V_2 \# \cS_2)$ and write 
$\phi(x \tensor e) = (ax + by) \tensor e + (cx + dy) \tensor g$
for $a,b,c,d \in \kk$.
The parameters satisfy one of the three following conditions:
\begin{itemize}
\item $a \in \kk^\times$, $b = c = d = 0$;
\item $b,d \in \kk$, $b \neq 0$, $b \neq -d$,
$a = -\frac{d^2}{b}$, $c = -d$;
\item $c,d \in \kk$, $c \neq -d$,
$a = \pm \sqrt{\frac{c^2+d^2}{2}}$, $b = \mp \sqrt{\frac{c^2+d^2}{2}}$.
 \end{itemize}
\end{theorem}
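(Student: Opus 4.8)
The plan is to combine the structural reductions already in hand with the single algebra relation of $V_2$ that has not yet been used, followed by an invertibility count on the degree-one part. By the analysis preceding the statement, every $\phi\in\Aut(V_2\#\cS_2)$ is graded of degree one, satisfies $\phi(1\tensor g)=\pm(1\tensor g)$ by Lemma~\ref{lem.idimg}, and is completely determined by $\phi(x\tensor e)=(ax+by)\tensor e+(cx+dy)\tensor g$; moreover, conjugating by $1\tensor g$ and using $g.(px+qy)=qx+py$ forces $\phi(y\tensor e)=(bx+ay)\tensor e+(dx+cy)\tensor g$ in either sign case. I would carry out the case $\phi(1\tensor g)=1\tensor g$ in detail, the other sign being identical since it alters neither the relation below nor the degree-one determinant.

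The only defining relation of $V_2$ not yet exploited is $xy+yx=0$, so the heart of the argument is to impose $\phi\big((xy+yx)\tensor e\big)=0$. For this I would feed the linear forms above into \eqref{eq2}, simplifying products via the rule $(px+qy)(p'x+q'y)=pp'x^2+qq'y^2+(pq'-qp')xy$ valid in $V_2$. A direct substitution makes every $xy$-term cancel and collapses the $\tensor e$-component to $(2ab+c^2+d^2)(x^2+y^2)$ and the $\tensor g$-component to $(a+b)(c+d)(x^2+y^2)$. Since $V_2$ is a domain and $x^2+y^2\neq0$, the vanishing of both components is equivalent to the two scalar constraints
\[
2ab+c^2+d^2=0 \qquad\text{and}\qquad (a+b)(c+d)=0.
\]

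It remains to turn ``$\phi$ is invertible'' into nondegeneracy inequalities and to branch. As $V_2\#\cS_2$ is connected graded and generated in degree one, $\phi$ is an automorphism exactly when its action on the degree-one space is invertible; in the basis $\{x\tensor e,y\tensor e,x\tensor g,y\tensor g\}$ this action is the block matrix $\left(\begin{smallmatrix}P & Q\\ Q & P\end{smallmatrix}\right)$ with $P=\left(\begin{smallmatrix}a & b\\ b & a\end{smallmatrix}\right)$ and $Q=\left(\begin{smallmatrix}c & d\\ d & c\end{smallmatrix}\right)$, whose determinant factors as $\big[(a+b)^2-(c+d)^2\big]\big[(a-b)^2-(c-d)^2\big]$. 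In particular $a+b$ and $c+d$ cannot vanish simultaneously, so the two factors of $(a+b)(c+d)=0$ are mutually exclusive. If $c+d=0$, substituting $c=-d$ into the first constraint gives $ab=-d^2$, which yields the first listed form when $b=0$ (so $c=d=0$ and the determinant $a^4$ forces $a\in\kk^\times$) and the second listed form when $b\neq0$ (so $a=-d^2/b$, with the determinant forcing $b\neq-d$). If instead $a+b=0$, the first constraint reads $2a^2=c^2+d^2$, giving the third listed form with $a=\pm\sqrt{(c^2+d^2)/2}$ and $b=-a$, where the determinant computes to $-(c+d)^4$ and hence forces $c\neq-d$.

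I expect the main obstacle to lie not in the relation computation---which is mechanical once \eqref{eq2} and the product rule are in place---but in the invertibility bookkeeping: one must read the correct nondegeneracy inequality off each specialization of the factored determinant and confirm that the degenerate overlap $a+b=c+d=0$ is genuinely excluded, so that the single equation $(a+b)(c+d)=0$ splits cleanly into precisely the three stated branches.
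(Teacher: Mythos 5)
Your proposal is correct as a proof of the theorem as stated, and it takes a genuinely different route from the paper: the paper's entire proof is the single sentence that the result ``is easily obtained by checking in Maple which parameters satisfy the defining relation and give a bijective map,'' whereas you give a complete hand derivation. Your two key computations check out: substituting $r=ax+by$, $s=cx+dy$ into \eqref{eq2} does collapse the relation $\phi((xy+yx)\tensor e)=0$ to the two constraints $2ab+c^2+d^2=0$ and $(a+b)(c+d)=0$, and the degree-one matrix in the basis $\{x\tensor e,y\tensor e,x\tensor g,y\tensor g\}$ is indeed $\left(\begin{smallmatrix}P&Q\\Q&P\end{smallmatrix}\right)$, with determinant $(a+b+c+d)(a+b-c-d)(a-b+c-d)(a-b-c+d)$, which agrees with your factored form; your observation that this rules out $a+b=c+d=0$ is what makes the two factors of the second constraint mutually exclusive. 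One point of hygiene: $V_2\#\cS_2$ is \emph{not} connected graded (its degree-zero part is $\kk\cS_2$) nor generated in degree one, so the bijectivity criterion should instead be justified by noting that $\phi$ is bijective on $A_0$ (since $\phi(1\tensor g)=\pm 1\tensor g$ by Lemma \ref{lem.idimg}), that the algebra is generated by $A_0\oplus A_1$, and that each graded piece is finite dimensional; with that said, the criterion you use is correct.

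The one genuine imprecision is in your second branch, exactly the ``invertibility bookkeeping'' you flag as the main risk. With $c=-d$, $b\neq 0$, $a=-d^2/b$, the determinant specializes to $\bigl((b^2-d^2)/b\bigr)^4$, so invertibility forces $b\neq\pm d$, not merely $b\neq -d$. Since $b\neq -d$ is a weaker but still true consequence, your proof of the stated necessity claim goes through; however, your closing assertion that the analysis splits ``precisely'' into the three stated branches --- that is, that the listed conditions are also sufficient --- is false, and in fact the paper's second bullet has the same omission. Concretely, $(a,b,c,d)=(-1,1,-1,1)$ satisfies both relation constraints and every condition listed in the second bullet, yet the corresponding endomorphism sends $(x+y)\tensor e$ to $0$ and so is not an automorphism. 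If you (or the paper) intend a characterization rather than a necessary condition, the second branch must also require $b\neq d$.
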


\begin{proof}
This is easily obtained by checking in Maple
which parameters satisfy the defining relation
and give a bijective map.
\end{proof}

\subsection*{Acknowledgment}
E. Kirkman was partially supported by
grant \#208314 from the Simons Foundation.

\bibliographystyle{plain}

\end{document}